\documentclass[11pt,a4paper]{amsart}
\usepackage[utf8]{inputenc}
\usepackage[T1]{fontenc}
\usepackage[english]{babel}

\usepackage{sidecap}

\usepackage{verbatim}
\usepackage{lmodern}
\usepackage{amsmath}
\usepackage{amssymb} 
\usepackage{amsthm} 
\usepackage{thmtools}
\usepackage{enumitem}
\usepackage{graphicx}
\usepackage[hidelinks]{hyperref} 
\usepackage{mathtools}
\usepackage{indentfirst}
\usepackage{tabularx}
\usepackage{bbm}
\usepackage[capitalise]{cleveref}
\usepackage{stmaryrd}
\usepackage{tabularx}

\usepackage[left=1.2in,top=2cm,right=1.2in,bottom=2cm]{geometry}

\usepackage{tikz} 
\usepackage{tikz-cd} 
\newcommand{\btk}{\begin{tikzcd}}
\newcommand{\etk}{\end{tikzcd}}
\usetikzlibrary{arrows,calc,decorations.markings}
\usepackage{xparse}

\usepackage{blindtext}

\makeatletter
\newcommand{\@bbify}[1]{
  \ifcsname b#1\endcsname
  \message{WARNING: Overwriting b#1 with blackboard letter!}
  \fi
  \expandafter\edef\csname b#1\endcsname
  {\noexpand\ensuremath{\noexpand\mathbb #1}\noexpand\xspace}}
\newcommand{\@calify}[1]{
  \ifcsname c#1\endcsname
  \message{WARNING: Overwriting c#1 with calligraphic letter!}
  \fi 
  \expandafter\edef\csname c#1\endcsname
  {\noexpand\ensuremath{\noexpand\mathcal #1}\noexpand\xspace}}
\newcommand{\@bfify}[1]{
  \ifcsname bf#1\endcsname
  \message{WARNING: Overwriting c#1 with bold letter!}
  \fi
  \expandafter\edef\csname bf#1\endcsname
  {\noexpand\ensuremath{\noexpand\mathbf #1}\noexpand\xspace}}
\newcounter{@letter}\stepcounter{@letter}
\loop\@bbify{\Alph{@letter}}\@calify{\Alph{@letter}}\@bfify{\Alph{@letter}}
\ifnum\the@letter<26\stepcounter{@letter}\repeat
\makeatother

\newenvironment{tz}{\begin{center}\begin{tikzpicture}}{\end{tikzpicture}\end{center}}
\tikzstyle{d}=[double distance=.3ex]
\tikzstyle{w}=[preaction={draw=white,-,line width=5pt}]

\tikzset{%
node distance=1.5cm, la/.style={scale=0.8}, lasmall/.style={scale=0.75}, over/.style={auto=false,fill=white,inner sep=1.5pt, minimum size=0, outer sep=0},
    symbol/.style={%
        draw=none,
        every to/.append style={%
            edge node={node [sloped, allow upside down, auto=false]{$#1$}}},
            
    }, pro/.style={postaction={decorate,decoration={
        markings,
        mark=at position .5 with {\node at (0,0) {$\bullet$};}
      }},
      inner sep=.9ex,
      },
      prosmall/.style={postaction={decorate,decoration={
        markings,
        mark=at position .5 with {\node at (0,0) {$\scriptstyle \bullet$};}
      }},
      inner sep=.9ex,
      },
  n/.style={double equal sign distance, -implies}, t/.style={double distance=2.5pt, -implies, postaction={draw,-}},
}

\NewDocumentCommand{\punctuation}{ m m O{5pt} }{\node at ($(#1.east)-(0,#3)$) {#2};}

\newcommand{\Set}{\mathrm{Set}}
\newcommand{\sSet}{\mathrm{sSet}}

\newcommand{\an}{\ensuremath{\mathrm{An}}}
\newcommand{\nfib}{\ensuremath{\mathrm{NFib}}}

\newcommand{\id}{\mathrm{id}}
\newcommand{\op}{\mathrm{op}}

\newcommand{\Cat}{\mathrm{Cat}}
\newcommand{\Gpd}{\mathrm{Gpd}}
\newcommand{\Loc}{\cL\mathrm{oc}}

\newcounter{diagram}
\renewcommand{\thediagram}{\thesubsection}
\newenvironment{diagram}{
\setcounter{diagram}{\value{subsection}}\refstepcounter{subsection}\refstepcounter{diagram}
\begin{center}
\normalfont{(\thediagram)}\hfill\begin{tikzpicture}[baseline=(current bounding box.center)]}
    {\end{tikzpicture}\hfill\text{ }\end{center}}

\newlist{rome}{enumerate}{7}
\setlist[rome]{label=(\roman*)}
\newlist{numbered}{enumerate}{7}
\setlist[numbered]{label=(\arabic*)}
\setlist[numbered,1]{start=0}

\newtheorem{theorem}{Theorem}[section]
\newtheorem{cor}[theorem]{Corollary}
\newtheorem{prop}[theorem]{Proposition}
\newtheorem{lem}[theorem]{Lemma}
\declaretheorem[name=Theorem,numbered=yes]{theoremA}

\theoremstyle{definition}
\newtheorem{defn}[theorem]{Definition}

\newtheorem{notation}[theorem]{Notation}

\newtheorem{constr}[theorem]{Construction}

\theoremstyle{remark}
\newtheorem{rem}[theorem]{Remark}

\crefname{theorem}{Theorem}{Theorems}
\crefname{theoremA}{Theorem}{Theorems}
\crefname{cor}{Corollary}{Corollaries}
\crefname{prop}{Proposition}{Propositions}
\crefname{lem}{Lemma}{Lemmas}

\crefname{defn}{Definition}{Definitions}
\crefname{terminology}{Terminology}{Terminologies}
\crefname{ex}{Example}{Examples}
\crefname{notation}{Notation}{Notations}
\crefname{descr}{Description}{Descriptions}
\crefname{constr}{Construction}{Constructions}

\crefname{rem}{Remark}{Remarks}

\title{A model structure for Grothendieck fibrations}

\author[L.\ Moser]{Lyne Moser}
\address{Fakultät für Mathematik, Universität Regensburg, 93040 Regensburg, Germany}
\email{lyne.moser@ur.de}

\author[M.\ Sarazola]{Maru Sarazola}
\address{School of Mathematics, University of Minnesota, Minneapolis MN, 55415, USA}
\email{maru@umn.edu}

\begin{document}

\begin{abstract}

    We construct two model structures, whose fibrant objects capture the notions of \emph{discrete fibrations} and of \emph{Grothendieck fibrations} over a category $\mathcal{C}$. For the discrete case, we build a model structure on the slice $\mathrm{Cat}_{/\mathcal{C}}$, Quillen equivalent to the projective model structure on $[\mathcal{C}^{\mathrm{op}},\mathrm{Set}]$ via the classical category of elements construction. The cartesian case requires the use of markings, and we define a model structure on the slice $\mathrm{Cat}^+_{/\mathcal{C}}$, Quillen equivalent to the projective model structure on $[\mathcal{C}^{\mathrm{op}},\mathrm{Cat}]$ via a marked version of the Grothendieck construction.
    
    We further show that both of these model structures have the expected interactions with their $\infty$-counterparts; namely, with the contravariant model structure on $\mathrm{sSet}_{/ N\mathcal{C}}$ and with Lurie's cartesian model structure on $\mathrm{sSet}^+_{/ N\mathcal{C}}$.
\end{abstract}

\maketitle

\section{Introduction}\label{section:intro}

The Grothendieck construction \cite{Grothendieck} plays a central role in category theory: it allows us to freely move between the worlds of indexed categories and fibered categories, providing access to tools and results from both. 

Given a functor $F\colon \cC^\op \to \Cat$, its Grothendieck construction yields a \emph{Grothendieck fibration} over $\cC$; that is, a functor $P\colon \cP\to\cC$ with the property that, for every morphism $f\colon c\to Pa$ in $\cC$, there is a $P$-cartesian morphism $g\colon a\to a'$ in $\cP$ such that $Pg=f$. In fact, the Grothendieck fibrations that arise in this way satisfy a special property: they are \emph{split}; meaning, they admit a choice of cartesian lifts which is functorial. Then, the Grothendieck construction gives an equivalence of categories  
\[ \textstyle \int_\cC\colon [\cC^\op,\Cat]\to\mathrm{GrFib}^{\mathrm{split}}_{/\cC}\] between the category of functors $\cC^\op\to\Cat$ and natural transformations, and the category of split Grothendieck fibrations over $\cC$ and cartesian functors, i.e., functors that preserve cartesian morphisms (see, for instance, \cite[Proposition 10.1.11, Theorem 10.6.16]{johnsonyau}).

In practice, however, many of the Grothendieck fibrations one encounters are not split---such as the codomain fibration---and one finds the need to broaden the scope of the above correspondence. To obtain all Grothendieck fibrations, we must instead work with \emph{pseudo-functors} $\cC^\op\to\underline{\Cat}$, where $\underline{\Cat}$ is now considered as a 2-category. We then get an equivalence of categories \[ \textstyle\int_\cC\colon \mathrm{Ps}[\cC^\op,\underline{\Cat}]\to\mathrm{GrFib}_{/\cC}\]
between the category of pseudo-functors $\cC^\op\to\Cat$ and pseudo-natural transformations, and the category of Grothendieck fibrations over $\cC$ and cartesian functors (see \cite[Theorem 10.6.16]{johnsonyau}). Of course, the passage from strict to pseudo-functors increases the complexity, as one must keep track of 2-dimensional coherence data. 

In this paper, we show how homotopy theory provides a new perspective on the Grothendieck construction. Instead of using pseudo-functors to recover all Grothendieck fibrations, our approach is to remain in the 1-categorical setting of strict functors; then, the required higher-dimensional information is encoded homotopically. 

Concretely, our goal is to construct a model structure on the slice category $\Cat_{/\cC}$ whose fibrant objects are the Grothendieck fibrations, using methods recently developed in  \cite{GMSV}. Unfortunately, this endeavor faces an immediate technical obstacle. Recall that 
a functor $P\colon \cP\to\cC$ is a Grothendieck fibration if, for every morphism $f\colon c\to Pa$ in $\cC$, there is a $P$-cartesian lift $g\colon a'\to a$ of~$f$. 
In turn, the morphism $g$ is \emph{$P$-cartesian}  if, for every morphism $h\colon b\to a$ in $\cP$ and every morphism $f\colon Pb\to Pa'$ in $\cC$ such that $Ph=(Pg)f$, there is a unique morphism $h'\colon b\to a'$ in $\cP$ with $Ph=f$ and $h=gh'$. While it is possible to use a lifting property to express what it means for a given morphism to be $P$-cartesian, we cannot encode the data of a Grothendieck fibration as a lifting property of the functor $P$ itself, as this would require us to have access to the set of all $P$-cartesian morphisms.

We choose to bypass this technical problem with a technical solution: by considering \emph{marked categories} instead, where the added decorations allow us to effectively ``mark'' the cartesian morphisms, and \emph{marked functors} between them, which preserve the markings and therefore allow us to encode cartesian functors. This is the strategy used by Lurie in the $\infty$-setting (see \cite[Section 3]{HTT}). Paraphrasing Lurie to fit our setting:
\begin{quote} In order to have a model category, we need to be able to form fibrant replacements: in
other words, we need the ability to enlarge an arbitrary functor $P\colon\cP\to\cC$ into
a commutative diagram
\begin{tz}
    \node[](1) {$\cP$}; 
    \node[below right of=1](3) {$\cC$};
    \node[above right of=3](2) {$\cQ$}; 
    \draw[->] (1) to node[above,la]{$\phi$} (2);
    \draw[->] (1) to node[left,la,yshift=-2pt]{$P$} (3); 
    \draw[->] (2) to node[right,la,yshift=-2pt]{$Q$} (3);
\end{tz}
where $Q$ is a Grothendieck fibration built from $P$. A question arises: for which morphisms $f$ of $\cP$ should $\phi(f)$ be $Q$-cartesian? This sort of information
is needed for the construction of $\cQ$; consequently, we need a formalism in which certain morphisms of $\cC$ have been distinguished.
\end{quote}

With this tool in hand, the notion of Grothendieck fibration $P\colon\cP\to\cC$ is then upgraded to that of a \emph{cart-marked Grothendieck fibration}: a functor between marked categories $P\colon (\cP,E)\to (\cC, \mathrm{Mor}\cC)$ where the set $E$ of marked morphisms in $\cP$ consists precisely of the $P$-cartesian morphisms. Letting $\Cat^+$ denote the category of marked categories and functors that preserve the markings, our first main result reads as follows.

\begin{theoremA} \label{thmA}
There is a model structure on $\Cat^+_{/\cC}$ in which 
\begin{rome}[leftmargin=1.1cm]
\item the cofibrations are the injective-on-objects functors,
\item the fibrant objects are the cart-marked Grothendieck fibrations,
\item the weak equivalences between fibrant objects are the equivalences of categories, 
\item the fibrations between fibrant objects are the isofibrations.
\end{rome}
\end{theoremA}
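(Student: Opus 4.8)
The plan is to obtain this model structure by transferring or adapting an existing one, using the machinery of \cite{GMSV} referenced in the introduction. Since that paper provides methods for building model structures whose fibrant objects satisfy a prescribed lifting-type property, the natural approach is:

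1. **Identify the generating (trivial) cofibrations / the "fibrant object" characterization.** The cart-marked Grothendieck fibrations over $\cC$ should be characterized by a *right lifting property* against a small set of marked functors in $\Cat^+_{/\cC}$. We need two kinds of data: (a) maps forcing the *existence* of cartesian lifts of morphisms in $\cC$ — i.e., for each morphism $f: c \to Pa$ we need a marked lift — and (b) maps forcing a marked morphism to actually *be* cartesian (the unique factorization property). So I'd write down explicit small marked categories (walking morphism, walking cartesian-lift configuration, walking "cospan + unique fill") and the corresponding inclusions.

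2. **Apply the GMSV transfer/recognition theorem.** Feed this set of maps into the construction of \cite{GMSV} to produce a model structure on $\Cat^+_{/\cC}$ in which the cofibrations are as claimed (injective-on-objects) and the fibrant objects are exactly those with the RLP, i.e., the cart-marked Grothendieck fibrations. This is where the abstract machinery does the heavy lifting; the input hypotheses of that theorem (cofibrant generation data, interaction of the relevant classes, etc.) must be checked.

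**Key steps in order.**

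- First I would set up the ambient category $\Cat^+_{/\cC}$ carefully: objects are marked functors $(\cP, E) \to (\cC, \mathrm{Mor}\,\cC)$, and I'd note the basic (co)completeness and size properties needed.
- Then I would define the relevant class of "anodyne" maps by a generating set $J$ as sketched above, and *verify* that a marked functor $P$ over $\cC$ has the RLP against $J$ **if and only if** it is a cart-marked Grothendieck fibration. The forward direction is a diagram chase; the converse requires showing that the markings $E$ force exactly the cartesian morphisms — this matches the definition given in the introduction.
- Next, invoke the main theorem of \cite{GMSV} to get the model structure with the stated cofibrations and fibrant objects.
- Finally, identify the weak equivalences and fibrations *between fibrant objects*: show they are respectively the underlying equivalences of categories and the isofibrations. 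This should follow by comparing the model structure to the canonical (folk) model structure on $\Cat$ — the fibrant objects are Grothendieck fibrations (which are in particular isofibrations), and between such objects the notions of weak equivalence and fibration should reduce to the categorical ones.

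**Main obstacle.**

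The hardest part, I expect, is **step 2 — verifying the RLP characterization**, and in particular the *converse* direction: showing that having the lifting property against the generating set forces the set $E$ of marked morphisms to consist of *all and only* the $P$-cartesian morphisms, not just some of them. Marking too many morphisms would break functoriality of $\phi$ in fibrant replacement; marking too few would fail to detect the fibration structure. Encoding "a marked morphism $g$ is $P$-cartesian" — which is a *unique* lifting condition involving an existential-then-unique quantifier — via a lifting property requires a clever choice of generators (likely using both a "there exists a fill" map and a "any two fills agree" map, in the style of Lurie's inner-anodyne / marked-anodyne maps). Getting these generators exactly right, and checking that the resulting anodyne maps interact correctly with the injective-on-objects cofibrations so that the GMSV hypotheses apply, is the crux. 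A secondary subtlety is the identification of fibrations between fibrant objects as isofibrations, which typically requires a separate argument comparing to the folk model structure on $\Cat$ rather than following formally.
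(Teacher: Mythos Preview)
Your overall strategy matches the paper's: define a generating set of marked anodyne extensions, verify that the objects with the RLP are exactly the cart-marked Grothendieck fibrations, and then invoke \cite[Theorem~2.8]{GMSV} (via \cite[Proposition~2.21]{GMSV}) to produce the model structure.

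However, your assessment of where the difficulty lies is off. The RLP characterization you flag as the ``hardest part'' is in fact the routine step: once you write down the five generators (existence of marked lifts, the cartesian factorization property and its uniqueness, closure of markings under isomorphisms and composition), the verification that fibrant objects are exactly the cart-marked Grothendieck fibrations is a direct diagram chase using standard facts about cartesian morphisms. Your worry about ``all and only'' the cartesian morphisms being marked is handled by two of the generators ($\bI^\flat\to\bI^\sharp$ and the composition-of-marked generator), and this is not delicate.

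The substantive work you have not accounted for is in checking the \emph{hypotheses} of the GMSV theorem, which you wave off as ``input hypotheses \ldots\ must be checked.'' Two of these are nontrivial:
\begin{itemize}
\item[(1)] \emph{Trivial fibrations are weak equivalences.} Since weak equivalences are defined via naive fibrant replacement, you must show that a marked trivial fibration admits a replacement that is an equivalence of categories. The paper does this by proving that marked trivial fibrations are stable under pushout along cofibrations, and that the target of a marked trivial fibration out of a cart-marked Grothendieck fibration is again one. Neither follows formally.
\item[(P)] \emph{Path objects for fibrant objects.} You need an explicit factorization of the diagonal $(\cP,E)\to(\cP,E)\times_{\cC^\sharp}(\cP,E)$ as a weak equivalence followed by a naive fibration, and you must check the middle term is itself a cart-marked Grothendieck fibration. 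This requires a concrete construction.
\end{itemize}
Your plan for identifying fibrations between fibrant objects as isofibrations is fine and close to what the paper does, but note this is proved directly (lifting marked morphisms against an isofibration using cartesian lifts on both sides), not by an abstract comparison with the folk model structure.
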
 

The classical Grothendieck construction naturally admits a marked analogue. We introduce it in \cref{section:markedgroth}, and show the following result, which is notably a 1-categorical (albeit homotopical) statement, as it involves strict (and not pseudo-) functors.

\begin{theoremA}\label{thmB} 
The marked Grothendieck construction $\int^+_\cC$ is part of an adjunction
\begin{tz}
\node[](1) {$[\cC^{\op},\Cat]$}; 
\node[right of=1,xshift=1.6cm](2) {$\Cat^+_{/\cC}$}; 

\draw[->] ($(1.east)-(0,5pt)$) to node[below,la]{$\int^+_\cC$} ($(2.west)-(0,5pt)$);
\draw[->] ($(2.west)+(0,5pt)$) to node[above,la]{$\cT^+_\cC$} ($(1.east)+(0,5pt)$);

\node[la] at ($(1.east)!0.5!(2.west)$) {$\bot$};
\end{tz}
which is a Quillen equivalence between the projective model structure and the model structure of \cref{thmA}.
\end{theoremA}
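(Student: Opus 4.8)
The plan is to first establish that the adjunction $\int^+_\cC \dashv \cT^+_\cC$ exists and is Quillen, and then upgrade it to a Quillen equivalence by a careful analysis of the derived unit and counit. For the Quillen property, since the projective model structure on $[\cC^\op,\Cat]$ is cofibrantly generated, it suffices to check that $\int^+_\cC$ sends the generating (trivial) cofibrations to (trivial) cofibrations in $\Cat^+_{/\cC}$. The generating cofibrations of the projective model structure are the maps $\cC(c,-)\times j$ (tensored with representables), where $j$ runs over generating cofibrations of the canonical model structure on $\Cat$; applying $\int^+_\cC$ to these, one computes explicitly that the resulting functor is injective on objects, hence a cofibration by \cref{thmA}(i). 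For the generating trivial cofibrations one argues similarly that the image is additionally a weak equivalence — here it is convenient to invoke that weak equivalences in $\Cat^+_{/\cC}$ are detected after fibrant replacement (and that the acyclic cofibrations of $\Cat$ are sent to maps that become equivalences of categories over $\cC$ after marking). Alternatively, and perhaps more cleanly, one checks that the right adjoint $\cT^+_\cC$ preserves fibrant objects and fibrations between fibrant objects: given a cart-marked Grothendieck fibration $Q\colon(\cQ,E)\to(\cC,\mathrm{Mor}\,\cC)$, one verifies that $\cT^+_\cC(Q)$, which at each object $c$ is (a suitable full subcategory of marked sections of) the fiber, is objectwise a category and that the structure maps are those of a strict functor, so it lands in $[\cC^\op,\Cat]$; and that $\cT^+_\cC$ takes isofibrations between such to objectwise isofibrations, i.e.\ projective fibrations. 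Combined with the fact that $\cT^+_\cC$ is visibly right Quillen-compatible on weak equivalences between fibrant objects (it sends equivalences of categories over $\cC$ to objectwise equivalences), this gives the Quillen adjunction.

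To promote this to a Quillen equivalence, I would show that the derived unit and counit are weak equivalences. The key input is the classical equivalence of categories $\int_\cC\colon[\cC^\op,\Cat]\xrightarrow{\sim}\mathrm{GrFib}^{\mathrm{split}}_{/\cC}$ recalled in the introduction, together with its marked refinement constructed in \cref{section:markedgroth}: the marked Grothendieck construction $\int^+_\cC F$ of a functor $F$ is already a cart-marked Grothendieck fibration (the cartesian morphisms are exactly the marked ones), hence a fibrant object. Therefore every object in the image of $\int^+_\cC$ is fibrant, and the derived counit at a fibrant object $Q$ agrees with the ordinary counit $\int^+_\cC\cT^+_\cC(Q)\to Q$; one then identifies this, via the marked Grothendieck construction's universal property, with the comparison exhibiting any cart-marked Grothendieck fibration as (equivalent to) the Grothendieck construction of its associated "homotopy" indexed category — and checks it is an equivalence of categories over $\cC$, hence a weak equivalence between fibrant objects by \cref{thmA}(iii). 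For the derived unit at a projectively cofibrant (in particular pointwise-cofibrant, i.e.\ any) $F$, one takes a fibrant replacement $\int^+_\cC F \to R$ in $\Cat^+_{/\cC}$ and must show $F\to\cT^+_\cC(R)$ is an objectwise equivalence of categories; since $\int^+_\cC F$ is already fibrant one may take $R=\int^+_\cC F$, reducing to showing that the unit $F\to\cT^+_\cC\int^+_\cC F$ is an isomorphism (or at least an objectwise equivalence), which is precisely the statement that $\cT^+_\cC$ recovers $F$ from its marked Grothendieck construction — the marked analogue of the classical fact that $\int_\cC$ is fully faithful.

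The main obstacle I anticipate is the derived counit: showing that an \emph{arbitrary} cart-marked Grothendieck fibration $Q$ — which need not be split — is genuinely weakly equivalent, over $\cC$, to $\int^+_\cC$ of a strict functor. Classically this is exactly where pseudo-functors are unavoidable, and the whole point of the paper is that the model structure absorbs the 2-dimensional coherence data homotopically. Concretely, one must produce from $Q$ a strict functor $F_Q\colon\cC^\op\to\Cat$ (a strictification / rigidification of the pseudo-functor of fibers) together with an equivalence $\int^+_\cC F_Q\xrightarrow{\sim}\cQ$ over $\cC$ preserving and reflecting cartesian morphisms. I expect the cleanest route is to define $F_Q = \cT^+_\cC(Q)$ directly — so $F_Q(c)$ is the category of marked sections over the slice $\cC_{/c}$, or equivalently the "fat fiber" — and then verify that the canonical comparison functor $\int^+_\cC\cT^+_\cC(Q)\to\cQ$ (the counit) is an equivalence of categories. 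This is a bicategorical Yoneda / coherence argument: it amounts to checking that each fiber of the counit is an equivalence, using that $Q$ is a Grothendieck fibration to transport along cartesian lifts, and that the counit is compatible with the markings on both sides. The bookkeeping of cartesian lifts and of how the markings on $\int^+_\cC\cT^+_\cC(Q)$ match those on $\cQ$ is where the real work lies; everything else is a matter of unwinding definitions and citing \cref{thmA}.
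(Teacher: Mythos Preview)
You have the direction of the adjunction reversed throughout: in this paper $\cT^+_\cC$ is the \emph{left} adjoint and $\int^+_\cC$ is the \emph{right} adjoint (look at the definition of $\cT^+_\cC$---it sends $c$ to the \emph{localization} $(c\downarrow P)[E_c^{-1}]$, a colimit-type construction, not to a category of sections or a fat fiber). Consequently both of your proposed routes to the Quillen property fail outright: $\int^+_\cC$ does not send generating projective cofibrations to cofibrations (there is no such obligation), and $\cT^+_\cC$ is not the right adjoint so asking it to preserve fibrations is the wrong check. The paper instead shows directly that $\int^+_\cC$ preserves fibrations and trivial fibrations, using that $\int^+_\cC F$ is always a cart-marked Grothendieck fibration together with the characterizations of (trivial) fibrations between fibrant objects in \cref{naivefibvsisofib,prop:trivfibbtwGF}.

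The reversal also scrambles your unit/counit analysis. What you call the ``counit'' $\int^+_\cC\cT^+_\cC(Q)\to Q$ is in fact the \emph{unit} $\eta_Q\colon Q\to \int^+_\cC\cT^+_\cC(Q)$ going the other way, and what you call the ``unit'' $F\to\cT^+_\cC\int^+_\cC F$ is the counit $\varepsilon_F\colon\cT^+_\cC\int^+_\cC F\to F$. Your instinct that the interesting part is showing $(\cP,E)\simeq \int^+_\cC\cT^+_\cC(P)$ for $P$ a cart-marked Grothendieck fibration is correct, and the paper indeed proves this directly by checking that $\eta_P$ is essentially surjective and fully faithful, then extends to arbitrary objects via fibrant replacement and Ken Brown. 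However, for the other half the paper does \emph{not} verify the counit directly; instead it shows that $\int^+_\cC$ reflects weak equivalences (a short explicit argument on components) and cites a general lemma to conclude. Your proposed route through ``$\int^+_\cC$ fully faithful implies unit iso'' is backwards on two counts: full faithfulness of a right adjoint controls the counit, not the unit, and in any case $\int^+_\cC$ is only fully faithful up to equivalence here (the counit is a levelwise equivalence, not an isomorphism), which the paper never actually needs to prove.
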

 
The Grothendieck construction plays an even more foundational role in $(\infty,1)$-category theory where it goes by the name ``straightening-unstraightening'' \cite{HTT}. In practice, it is infamously hard to construct a homotopy coherent diagram from an $\infty$-category $\cC$ to the $\infty$-category of $\infty$-categories. The fibrational approach provides a more approachable alternative: it allows us to instead do this by constructing a map of simplicial sets over that same $\infty$-category $\cC$. 

We show that our structure is homotopically compatible with Lurie's construction in the $\infty$-setting, by proving the following.

\begin{theoremA}
    The following square of homotopically fully faithful right Quillen functors commutes up to a level-wise weak equivalence. 
    \begin{tz}
\node[](1) {$[\cC^{\op},\Cat]_{\mathrm{proj}}$}; 
\node[right of=1,xshift=2.8cm](2) {$(\Cat^+_{/\cC^\sharp})_{\mathrm{GrFib}}$}; 
\node[below of=1](1') {$[\cC^\op,\sSet^+]_\mathrm{proj}$};
\node[below of=2](2') {$(\sSet^+_{/(N\cC)^\sharp})_{\mathrm{Cart}}$}; 

\draw[->] (1) to node[above,la]{$\int^+_\cC$} node[below,la]{$\simeq_{QE}$} (2); 
\draw[right hook->] (1) to node[left,la]{$(N^+(-)^\natural)_*$} (1'); 
\draw[right hook->] (2) to node[right,la]{$N^+$} (2');
\draw[->] (1') to node[below,la]{$\mathrm{Un}_\cC^+$} node[above,la]{$\simeq_{QE}$} (2');
\end{tz}
\end{theoremA}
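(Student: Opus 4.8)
The plan is to prove the statement in two stages: first, that each of the four functors is a homotopically fully faithful right Quillen functor; and second, that the square commutes up to a level-wise weak equivalence, by exhibiting — naturally in $F\in[\cC^\op,\Cat]$ — a weak equivalence in $(\sSet^+_{/(N\cC)^\sharp})_{\mathrm{Cart}}$ between $N^+(\int^+_\cC F)$ and $\mathrm{Un}_\cC^+\big((N^+(-)^\natural)_*F\big)$.

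For the first stage, two of the functors are handled by citation: $\int^+_\cC$ is a right Quillen functor and a Quillen equivalence by \cref{thmB} (and its proof), and $\mathrm{Un}_\cC^+$ is Lurie's marked unstraightening, a right Quillen equivalence by \cite{HTT}. The functor $N^+\colon\Cat^+_{/\cC^\sharp}\to\sSet^+_{/(N\cC)^\sharp}$ is right Quillen — its left adjoint preserves cofibrations and trivial cofibrations, and in particular the marked nerve carries cart-marked Grothendieck fibrations to cartesian fibrations — while $(N^+(-)^\natural)_*$ is the level-wise composition with the right Quillen functor $N^+(-)^\natural\colon\Cat\to\sSet^+$ (from the canonical model structure to the cartesian model structure over the point), namely the marked nerve endowed with the natural, i.e.\ isomorphism, marking; applied level-wise it is right Quillen for the projective model structures. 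As for homotopical full faithfulness, $\int^+_\cC$ and $\mathrm{Un}_\cC^+$ are equivalences on homotopy categories since they are Quillen equivalences, and $N^+(-)^\natural$ is homotopically fully faithful because its derived functor identifies $\Cat$ with the full subcategory of the cartesian-fibrant marked simplicial sets spanned by the $1$-categories; these properties pass level-wise and to slices, giving homotopical full faithfulness of $(N^+(-)^\natural)_*$ and of $N^+$.

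For the second stage, I would compute both composites explicitly on a functor $F\colon\cC^\op\to\Cat$. The category of elements $\int^+_\cC F$, regarded over $\cC^\sharp$, carries the cartesian marking, whose elements are exactly the morphisms $(f,\phi)\colon(c,x)\to(c',x')$ with invertible vertical component $\phi$; on the other side, $(N^+(-)^\natural)_*F$ is the level-wise marked nerve $c\mapsto(NF(c),\ \text{isomorphisms})$. The first key step is to identify $N^+(\int^+_\cC F)$ with the \emph{marked relative nerve} of $(N^+(-)^\natural)_*F$ (with the appropriate variance): an $n$-simplex of either consists of a functor $[n]\to\cC$ together with compatible simplices in the fibers, and under this bijection an edge is $P$-cartesian exactly when its image in the relevant fiber is invertible, i.e.\ exactly when it is marked, so the identification is one of marked simplicial sets over $(N\cC)^\sharp$. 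The second key step is to compare the marked relative nerve with $\mathrm{Un}_\cC^+$: these agree up to a natural weak equivalence over $(N\cC)^\sharp$ in the cartesian model structure, which is the marked refinement of Lurie's comparison between the relative nerve and the unstraightening functor \cite{HTT} (and which degenerates to an isomorphism if $\mathrm{Un}_\cC^+$ is itself modelled by the marked relative nerve). Concatenating the two steps gives the required natural weak equivalence.

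The step I expect to be the main obstacle is the bookkeeping of markings in the comparison with $\mathrm{Un}_\cC^+$: one must check that Lurie's weak equivalence relating the relative nerve to the unstraightening remains a weak equivalence once the source diagrams are decorated with their natural markings and the base with $(N\cC)^\sharp$, and that the comparison map is genuinely a map over $(N\cC)^\sharp$ respecting the cartesian markings on both sides. A secondary, more cosmetic point is the reconciliation of strict diagrams on $\cC^\op$ with the simplicially enriched diagrams on $\mathfrak{C}[N\cC]$ underlying $\mathrm{Un}_\cC^+$; since $\cC$ is an ordinary category, the counit $\mathfrak{C}[N\cC]\to\cC$ is a weak equivalence of simplicial categories, so this discrepancy is harmless and can be absorbed into the natural weak equivalence.
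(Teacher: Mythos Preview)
Your approach is correct in outline and shares the paper's key intermediary --- the marked relative nerve --- but the organisation is genuinely different. The paper works entirely on the \emph{left adjoint} side: it first proves that the square of left adjoints
\[
\cT^+_\cC \circ c^+ \;\cong\; (\Loc\circ c^+)_*\circ \cF^+_{(-)}(\cC)
\]
commutes up to isomorphism (checking on representables $\Delta[n]^\flat\to(N\cC)^\sharp$ and $\Delta[1]^\sharp\to(N\cC)^\sharp$ using cocontinuity, and computing a left Kan extension), then invokes Lurie's natural weak equivalence $\mathrm{St}^+_\cC\Rightarrow\cF^+_{(-)}(\cC)$, and finally applies a formal lemma (Joyal's \cite[Lemma~E.2.12]{JoyalVolumeII}) to pass from the left-adjoint square to the right-adjoint square, using that all objects in $[\cC^\op,\Cat]_\mathrm{proj}$ are fibrant. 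You instead stay on the right-adjoint side throughout, identifying $N^+(\int^+_\cC F)$ directly with the marked relative nerve of $(N^+(-)^\natural)_*F$. This identification is exactly the adjoint transpose of the paper's left-adjoint isomorphism, so the two routes are formally equivalent; yours is more explicit and avoids the left Kan extension computation, while the paper's buys uniformity by reducing to representables.

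One point to sharpen in your second key step: Lurie's comparison \cite[Lemma~3.2.5.17(2)]{HTT} is stated between the \emph{left} adjoints $\mathrm{St}^+_\cC$ and $\cF^+_{(-)}(\cC)$, not between $\mathrm{Un}^+_\cC$ and the marked relative nerve. To obtain the weak equivalence you want between the right adjoints, you still need a formal passage (both pairs are Quillen equivalences, all objects of $[\cC^\op,\sSet^+]_\mathrm{proj}$ are fibrant, hence the mates of the left-adjoint comparison give the right-adjoint comparison) --- this is precisely the content of the lemma the paper cites. So your ``main obstacle'' is not the marking bookkeeping, which is handled by Lurie, but rather making this left-to-right transfer precise; once you do, your argument goes through.
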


In particular, this means that \cref{thmA,thmB} should be interpreted as the 1-categorical versions of Lurie's cartesian model structure on marked simplicial sets and his straightening-unstraightening construction. Notably, they have the benefit that they are not obtained as a restriction of the existing $\infty$-machinery, but are developed entirely 1-categorically.

One can also consider a discrete version of the Grothendieck construction 
\[ \textstyle \int_\cC\colon [\cC^\op,\Set]\to\Cat_{/\cC}\]
where inputs are now functors to $\Set$. This construction---
called the \emph{category of elements}---gives an equivalence between the categories of functors from $\cC^\op$ to $\Set$ and of discrete fibrations over $\cC$ \cite[Proposition 1.1.7]{elephant}, and is intimately linked with the study of representable functors and universal properties. Our results above have analogous versions in this discrete setting.

These homotopical results in the discrete case are less exciting, as the adjunction $\cT_\cC\dashv\int_\cC$ is known to be an equivalence of categories when restricted to the category of discrete fibrations over $\cC$, and hence there is no homotopical strictification occurring here. However, understanding the techniques used in the discrete case is a useful step towards understanding the more complex case of Grothendieck fibrations, so we discuss the discrete case first for pedagogical reasons.

\subsection*{Outline} 

\cref{section:discrete} recalls the definition of discrete fibrations, and constructs a model structure on $\Cat_{/\cC}$ whose fibrant objects are the discrete fibrations. \cref{section:catofelements} recalls the category of elements functor and its adjoint, and shows that they are Quillen equivalences. \cref{section:cartesian,section:markedgroth} establish the general (non-discrete) case: in \cref{section:cartesian} we discuss marked categories, (cart-marked) Grothendieck fibrations, and construct a model structure on $\Cat^+_{/\cC}$ whose fibrant objects are the cart-marked Grothendieck fibrations. In \cref{section:markedgroth} we introduce a marked Grothendieck construction, and prove it is a Quillen equivalence. Finally, \cref{section:discinfty,section:cartinfty} compare our results to their infinity analogues in the literature. 

\subsection*{Acknowledgments}

We would like to thank Tim Campion, Kim Nguyen, Nima Rasekh and Jonathan Weinberger for interesting discussions related
to the subject of this paper. We would also like to thank an anonymous referee for suggesting various improvements. During the realization of this work, the first named author was a member of the Collaborative Research Centre ``SFB 1085: Higher
Invariants'' funded by the Deutsche Forschungsgemeinschaft (DFG).

\section{Model structure for discrete fibrations}\label{section:discrete}

Throughout the paper we fix a category $\cC$. In this section, we construct a model structure on the category $\Cat_{/\cC}$ of categories over $\cC$ whose fibrant objects are the discrete fibrations. Let us first recall some terminology.

\begin{defn}
Let $P\colon \cP\to \cC$ be a functor. Given a morphism $f\colon c'\to c$ in $\cC$, a morphism $g\colon a'\to a$ in $\cP$ such that $Pg=f$ is called a \textbf{$P$-lift} of $f$. 

A functor $P\colon \cP\to \cC$ is a \textbf{discrete fibration} if, for every object $a\in \cP$ and every morphism $f\colon c\to Pa$ in $\cC$, there is a unique $P$-lift $g\colon a'\to a$ of $f$. 
\end{defn}

\begin{notation}
    We denote by $\Cat_{/\cC}$ the slice category over $\cC$; that is, the category whose
    \begin{numbered}[leftmargin=1.1cm]
        \item objects are functors $P\colon \cP\to\cC$,
        \item morphisms $(P\colon\cP\to\cC)\to (Q\colon \cQ\to \cC)$ are functors $F\colon \cP\to\cQ$ with $QF=P$.
    \end{numbered}
\end{notation}

The desired model structure on $\Cat_{/\cC}$ will be related to the trivial model structure on~$\Cat$, which we now recall. 

\begin{theorem} \label{thm:MStrivialCat}
There is a combinatorial model structure on the category $\Cat$ , called the \emph{trivial model structure}, in which 
\begin{rome}[leftmargin=1.1cm]
\item the weak equivalences are the isomorphisms of categories, 
\item the cofibrations are all functors,
\item the fibrations are all functors. 
\end{rome}
\end{theorem}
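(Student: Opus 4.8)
The claim is that there is a combinatorial model structure on $\Cat$ in which every functor is both a cofibration and a fibration, and the weak equivalences are exactly the isomorphisms of categories. The plan is to verify the model category axioms directly, since they become almost trivial once we have the right generating sets; the real work is to exhibit small generating sets of (trivial) cofibrations, so that combinatoriality follows.

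First I would check the easy axioms. The class of isomorphisms of $\Cat$ satisfies 2-out-of-3 and is closed under retracts, so the weak equivalences are fine. The lifting axioms are vacuous in one direction: since every map is both a cofibration and a fibration, a trivial cofibration is an isomorphism, so it has the left lifting property against every functor (split the iso); dually a trivial fibration is an isomorphism and has the right lifting property against everything. So both lifting axioms hold. For factorizations: any functor $F\colon \cP\to\cC$ factors as $\cP \xrightarrow{\id} \cP \xrightarrow{F} \cC$, which is a (trivial cofibration, fibration) factorization since $\id$ is an isomorphism; and as $\cP \xrightarrow{F} \cC \xrightarrow{\id} \cC$, which is a (cofibration, trivial fibration) factorization. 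So all the axioms hold and we have a model structure.

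The only substantive point is \emph{combinatoriality}: we must show $\Cat$ is locally presentable (which is standard — $\Cat$ is locally finitely presentable) and that the cofibrations and the trivial cofibrations are each generated under pushout, transfinite composition, and retract by a \emph{set} of morphisms. For the trivial cofibrations this is immediate: they are the isomorphisms, which are generated by the empty set (or by $\{\id_\emptyset\}$), since the class of isomorphisms is the saturation of the empty set. For the cofibrations — which is all functors — I would take the standard generating set for $\Cat$: the functors $\emptyset \to \mathbbm{1}$, $\mathbbm{1}\sqcup\mathbbm{1}\to \mathbbm{2}$, $\mathbbm{2}\rightrightarrows \mathbbm{2}$ pushed to the codomain identifying the two arrows (i.e. $\mathbbm{2}\sqcup_{\mathbbm{1}\sqcup\mathbbm{1}}\mathbbm{2} \to \mathbbm{2}$), and $\mathbbm{3}' \to \mathbbm{3}$ encoding composition, where $\mathbbm{n}$ is the poset $[n-1]$; the cell complexes built from these are exactly all functors (every small category is built from its objects, generating arrows, and relations). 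Since every functor is a fibration, the maps with the right lifting property against this set are exactly the isomorphisms, confirming that this set generates the cofibrations and that the two generating sets are compatible.

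The main obstacle — really the only place any care is needed — is pinning down the generating cofibrations and checking that their saturation is genuinely \emph{all} functors (equivalently that the functors with the right lifting property against them are exactly the isomorphisms). This is a known presentation of $\Cat$ as a cofibrantly generated category, so I would either cite it or spell out the small-object-argument-style cell attachment: attach objects via $\emptyset\to\mathbbm 1$, attach morphisms via $\mathbbm 1\sqcup\mathbbm 1\to\mathbbm 2$, impose equalities of parallel morphisms via $\mathbbm 2\sqcup_{\partial}\mathbbm 2\to\mathbbm 2$, and impose composition relations via $\mathbbm 3'\to\mathbbm 3$. Everything else is formal.
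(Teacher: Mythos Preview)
Your overall strategy is sound and the verification of the model-category axioms is correct; the paper itself does not prove this theorem but simply recalls it as background. The gap is in your proposed generating set for the cofibrations: it does not generate all functors. The maps with the right lifting property against $\emptyset\to[0]$, $[0]\sqcup[0]\to[1]$, and $[1]\sqcup_{[0]\sqcup[0]}[1]\to[1]$ are precisely the functors that are surjective on objects and fully faithful---the trivial fibrations of the \emph{canonical} model structure on $\Cat$, not the isomorphisms. For a concrete witness, the collapse $\bI\to[0]$ of the walking isomorphism is surjective on objects and fully faithful, hence has the right lifting property against all of your generators, yet is not an isomorphism; dually, the fold map $[0]\sqcup[0]\to[0]$ fails to lift against $\bI\to[0]$ and therefore cannot lie in the saturation of your set.

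The fix is to add $[0]\sqcup[0]\to[0]$ to the generators, which forces injectivity on objects; with that addition the right class becomes exactly the isomorphisms and hence the left class is all functors. This is exactly the generating set the paper itself uses over the slice, in the proof of \cref{thm:discreteMS}. Your ``composition encoder'' is superfluous: fullness and faithfulness already give bijectivity on hom-sets, and composition is part of the data of a category rather than a relation to be imposed by a further cell.
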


From the trivial model structure on $\Cat$, we obtain a model structure on the slice category $\Cat_{/\cC}$ created by the forgetful functor $\Cat_{/\cC}\to \Cat$. Note that this model structure does not capture the desired class of fibrant objects as every functor $\cP\to \cC$ in $\Cat_{/\cC}$ is fibrant. We aim to construct a localization of the slice model structure on $\Cat_{/\cC}$ in which the fibrant objects are the discrete fibrations.

Our strategy is to apply a new technique for constructing model structures due to the authors, Guetta, and Verdugo \cite[Theorem 2.8]{GMSV}. This result requires us to specify a set of generating cofibrations, an auxiliary weak factorization system $(\an,\nfib)$ (whose role is to detect the fibrant objects and the fibrations between them) such that all morphisms in $\an$ are cofibrations, and a class $\cW_f$ of weak equivalences between fibrant objects. We now introduce these classes in our setting, and verify that the conditions of the theorem are satisfied.

\begin{defn}
The class $\an$ of \textbf{anodyne extensions} is the smallest weakly saturated class of functors over~$\cC$ which contains the functors 
\[ [0]\xrightarrow{1} [1]\xrightarrow{f} \cC \quad \text{and} \quad [1]\sqcup_{[0]} [1]\to [1]\xrightarrow{f} \cC \]
for all morphisms $f$ in $\cC$, where $[1]\sqcup_{[0]} [1]$ is the pushout of the span $[1]\xleftarrow{1}[0]\xrightarrow{1} [1]$.
\end{defn}

\begin{defn}
    The class $\nfib$ of \textbf{naive fibrations} consists of the morphisms in $\Cat_{/\cC}$ which have the right lifting property with respect to all morphisms in $\an$. Then an object in $\Cat_{/\cC}$ is called \textbf{naive fibrant} if the unique morphism to the terminal object $\id_\cC$ is a naive fibration.  
\end{defn}

\begin{rem}\label{fibobj}
By unpacking the lifting conditions, we get the following characterizations.
\begin{rome}[leftmargin=1.1cm]
    \item A functor $\cP\to \cQ$ over $\cC$ is a naive fibration if and only if it is a discrete fibration. 
    \item A functor $\cP\to \cC$ is naive fibrant if and only if it is a discrete fibration. 
\end{rome}
\end{rem}

The following fact regarding discrete fibrations, which can be deduced directly from the definition, will be useful for our purposes.

\begin{lem} \label{prop:discfib}
Given composable functors $F\colon \cP\to \cQ$ and $Q\colon \cQ\to \cC$ such that $Q$ is a discrete fibration, then $F$ is a discrete fibration if and only if $QF$ is so. In particular, a functor $\cP\to \cQ$ between discrete fibrations over $\cC$ is a discrete fibration.
\end{lem}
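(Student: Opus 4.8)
The plan is to unwind the definition of discrete fibration and run a direct diagram chase, handling existence and uniqueness of lifts separately in each direction. Throughout, write $P := QF$.

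First I would prove the implication ``$P$ a discrete fibration $\Rightarrow$ $F$ a discrete fibration''. Fix an object $a \in \cP$ and a morphism $f\colon b\to Fa$ in $\cQ$; I need a unique $F$-lift of $f$ with target $a$. Push $f$ down to $Qf\colon Qb\to QFa = Pa$ in $\cC$ and, using that $P$ is a discrete fibration, take the unique $P$-lift $g\colon a'\to a$ of $Qf$. Then $Fg$ and $f$ are both morphisms of $\cQ$ with target $Fa$ lying over $Qf$; since $Q$ is a discrete fibration, uniqueness of its lifts forces $Fg = f$ (and in particular $Fa' = b$), so $g$ is the desired $F$-lift. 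For uniqueness, if $g_1, g_2$ are $F$-lifts of $f$ with target $a$, then $Pg_1 = QFg_1 = Qf = QFg_2 = Pg_2$, so $g_1 = g_2$ by discreteness of $P$.

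Next, the implication ``$F$ and $Q$ discrete fibrations $\Rightarrow$ $P := QF$ a discrete fibration''. Fix $a \in \cP$ and $f\colon c\to Pa$ in $\cC$. First lift $f$ through $Q$ to the unique $\bar g\colon \bar a\to Fa$ in $\cQ$ with $Q\bar g = f$, then lift $\bar g$ through $F$ to the unique $g\colon a'\to a$ in $\cP$ with $Fg = \bar g$; then $Pg = QFg = Q\bar g = f$, giving existence. For uniqueness, if $g_1, g_2$ are $P$-lifts of $f$ with target $a$, then $Fg_1$ and $Fg_2$ have target $Fa$ and both lie over $f$, so $Fg_1 = Fg_2$ by discreteness of $Q$, and hence $g_1 = g_2$ by discreteness of $F$. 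The ``in particular'' clause is then immediate: if $P\colon\cP\to\cC$ and $Q\colon\cQ\to\cC$ are discrete fibrations and $F\colon\cP\to\cQ$ satisfies $QF = P$, then $P = QF$ and $Q$ are discrete fibrations, so the first implication applies.

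There is no genuine obstacle here; the argument is entirely formal. The only point requiring care is the bookkeeping of which object each lift ends at — specifically, noticing that applying $Q$ (resp.\ $F$) to two candidate lifts yields morphisms sharing a target, which is exactly the input needed to invoke uniqueness of $Q$-lifts (resp.\ $F$-lifts) — and not conflating uniqueness of discrete-fibration lifts with faithfulness of the functors involved.
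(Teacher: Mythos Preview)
Your proof is correct and is precisely the direct verification from the definition that the paper has in mind; the paper itself omits the argument entirely, merely noting that the lemma ``can be deduced directly from the definition''. There is nothing to add or compare.
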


\begin{rem}
    We can use the weak factorization system $(\an,\nfib)$ to factor any functor $\cP\to \cC$ over $\cC$ as an anodyne extension $J\colon \cP\to \widehat{\cP}$ followed by a naive fibration $\widehat{\cP} \to \cC$; in particular, this makes $\widehat{\cP} \to\cC$ a discrete fibration. From now on, we refer to a map $\cP\to \widehat{\cP}$ as above as a \textbf{naive fibrant replacement}.

    The pair $(\an,\nfib)$ that we use here is known in the literature as the \textit{comprehensive factorization system} \cite{final}, and the maps in $\an$ are precisely the final functors. In fact, this is an orthogonal factorization system, and so the naive fibrant replacements mentioned above are unique up to unique isomorphism. We mention this for the curious reader, although none of these facts are used in our results. 
\end{rem}

Since we expect this new model structure to be a localization of the slice model structure, we let the cofibrations be the same, namely all functors over $\cC$. In particular, the trivial fibrations are the isomorphisms over $\cC$. Similarly, we let the class of weak equivalences between naive fibrant objects be given by the functors $\cP\to \cQ$ over $\cC$ that are isomorphisms of categories.

Unfortunately, weak equivalences between arbitrary objects do not admit a similarly convenient description. According to \cite[Definition 2.4]{GMSV}, they are defined through a naive fibrant replacement.

\begin{defn}\label{defn:we}
    A functor $F\colon \cP\to\cQ$ over $\cC$ is a \textbf{weak equivalence} if there exists a naive fibrant replacement of $F$ which is an isomorphism. More explicitly, if there exists a commutative diagram of functors over $\cC$
    \begin{tz}
\node[](1) {$\cP$}; 
\node[right of=1](2) {$\cQ$}; 
\node[below of=1](1') {$\widehat{\cP}$}; 
\node[below of=2](2') {$\widehat{\cQ}$}; 

\draw[->] (1) to node[above,la]{$F$} (2); 
\draw[->] (1) to node[left,la]{$\iota_\cP$} (1'); 
\draw[->] (2) to node[right,la]{$\iota_\cQ$} (2'); 
\draw[->] (1') to node[below,la]{$\widehat{F}$} (2');
\end{tz}
    where $\widehat{\cP}\to\cC$, $\widehat{\cQ}\to\cC$ are discrete fibrations, $\iota_\cP, \iota_\cQ$ are anodyne extensions, and $\widehat{F}$ is an isomorphism.  
\end{defn}

Now that all of the relevant classes of maps have been introduced, we can prove the existence of the desired model structure on  $\Cat_{/\cC}$ for discrete fibrations.

\begin{theorem}\label{thm:discreteMS}
There is a combinatorial model structure on the category $\Cat_{/\cC}$, denoted by $(\Cat_{/\cC})_{\mathrm{discfib}}$, in which 
\begin{rome}[leftmargin=1.1cm]
\item the cofibrations are all functors, 
\item the fibrant objects are the discrete fibrations, 
\item the weak equivalences between fibrant objects are the isomorphisms, 
\item every functor between fibrant objects is a fibration.
\end{rome}
\end{theorem}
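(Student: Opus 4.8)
The plan is to invoke the general existence result \cite[Theorem 2.8]{GMSV}, which produces a model structure from the data of a set of generating cofibrations, an auxiliary weak factorization system $(\an,\nfib)$ detecting the fibrant objects and fibrations between them, and a class $\cW_f$ of weak equivalences between fibrant objects, provided these data satisfy the hypotheses of that theorem. So the bulk of the proof is verifying those hypotheses in our setting. First I would fix the data: the generating cofibrations are a set generating all functors over $\cC$ as a weakly saturated class (this exists since $\Cat$ is locally presentable, so $\Cat_{/\cC}$ is too, and the trivial model structure on $\Cat$ is combinatorial); the weak factorization system is $(\an,\nfib)$ from the preceding definitions; and $\cW_f$ is the class of isomorphisms over $\cC$ between naive fibrant objects.

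Next I would check the conditions of \cite[Theorem 2.8]{GMSV} one at a time. The condition that every anodyne extension is a cofibration is immediate here, since every functor over $\cC$ is a cofibration. The condition that $\cW_f$ has the two-out-of-three property among naive fibrant objects, and is closed under retracts, is clear for isomorphisms. The key remaining hypotheses are typically: (a) that $\cW_f$ together with the naive fibrations between fibrant objects and the trivial fibrations between fibrant objects assemble into a model structure on the full subcategory of fibrant objects — here the fibrant objects are the discrete fibrations by \cref{fibobj}, all maps between them are naive fibrations, the weak equivalences are isomorphisms, and the trivial model structure restricts correctly, using \cref{prop:discfib} to see that a map between discrete fibrations over $\cC$ is again a discrete fibration, hence a naive fibration; and (b) a compatibility condition relating $\cW_f$ to the anodyne extensions, roughly that an anodyne extension between fibrant objects lies in $\cW_f$. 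For (b), an anodyne extension $\cP \to \cQ$ with both $\cP \to \cC$ and $\cQ \to \cC$ discrete fibrations: since the comprehensive factorization system is orthogonal and $\cP \to \cQ$ is both anodyne (left class) and, by \cref{prop:discfib}, a discrete fibration (right class), it must be an isomorphism. This is the one genuinely load-bearing step, and I expect it to be the main obstacle — though it is short given the lemmas already established. (If \cite[Theorem 2.8]{GMSV} is stated with a slightly different packaging of hypotheses, the argument adapts: the essential inputs are $\cref{fibobj}$, $\cref{prop:discfib}$, orthogonality of $(\an,\nfib)$, and combinatoriality of the slice.)

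Once the hypotheses are verified, \cite[Theorem 2.8]{GMSV} immediately yields a combinatorial model structure on $\Cat_{/\cC}$ with the stated cofibrations, and in which the fibrant objects are exactly the naive fibrant objects, i.e. the discrete fibrations by \cref{fibobj}(ii); the weak equivalences between fibrant objects are the members of $\cW_f$, i.e. isomorphisms over $\cC$; and the fibrations between fibrant objects are the naive fibrations between them, which is every functor between discrete fibrations by \cref{prop:discfib}. This gives all four itemized claims, completing the proof. The only thing left to remark is that the weak equivalences of this model structure agree on all objects with \cref{defn:we}, which is automatic from the way \cite[Theorem 2.8]{GMSV} defines them.
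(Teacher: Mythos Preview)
Your overall strategy matches the paper's: both invoke \cite[Theorem 2.8]{GMSV} with the same data (all functors as cofibrations, $(\an,\nfib)$ as the auxiliary factorization system, isomorphisms as $\cW_f$), and both finish items (ii)--(iv) via \cref{fibobj} and \cref{prop:discfib}. The substantive divergence is in \emph{which} hypotheses you verify and how.

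The paper works through \cite[Proposition 2.21]{GMSV}, checking: trivial fibrations are weak equivalences (trivial, since trivial fibrations are isomorphisms); 2-out-of-6 for $\cW_f$; accessibility of $\cW_f$; existence of path objects for fibrant objects; and that naive fibrations which are weak equivalences between fibrant objects are trivial fibrations. The path-object step is handled by the factorization $\cP \xrightarrow{\id_\cP} \cP \to \cP\times_\cC\cP$, with \cref{prop:discfib} supplying the naive-fibration half. You instead argue that anodyne extensions between fibrant objects lie in $\cW_f$ via \emph{orthogonality} of the comprehensive factorization system: a map that is both final and a discrete fibration is an isomorphism. That argument is correct, but note the paper explicitly remarks that orthogonality ``is not used in our results''; the path-object route avoids this dependency entirely and keeps the proof self-contained relative to the stated lemmas. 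Your approach buys a slightly more conceptual one-line argument at the cost of importing a fact the paper chose not to rely on.

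Two smaller points: you check 2-out-of-3 where the paper checks 2-out-of-6 (immaterial for isomorphisms), and you do not mention accessibility of $\cW_f$, which the paper does verify as condition (3). You also appeal abstractly to local presentability for generating cofibrations, whereas the paper writes down an explicit generating set.
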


\begin{proof}
We apply \cite[Theorem 2.8]{GMSV} using \cite[Proposition 2.21]{GMSV}. First note that the class of cofibrations is indeed cofibrantly generated by the set of functors over $\cC$ containing
\[ \emptyset\to [0]\xrightarrow{c} \cC, \quad [0]\sqcup [0]\to [0]\xrightarrow{c} \cC, \quad [0]\sqcup [0]\to [1]\xrightarrow{f} \cC, \quad [1]\sqcup_{[0]\sqcup[0]} [1]\to [1]\xrightarrow{f} \cC \]
for all objects $c$ in $\cC$ and all morphisms $f$ in $\cC$. 

Condition~(1) (trivial fibrations are weak equivalences) is clear, as trivial fibrations are precisely the isomorphisms. Condition (2) (2-out-of-6 for $\cW_f$) is clear as the weak equivalences between fibrant objects are defined to be isomorphisms. Condition (3) (accessibility of $\cW_f$) follows from the fact that the class of isomorphisms of categories is accessible. 

To check condition (P) (path objects for fibrant objects), note that for any discrete fibration $\cP\to \cC$, the factorization 
\[ \cP\xrightarrow{\id_\cP}\cP\to \cP\times_{\cC} \cP \]
of the diagonal functor at $\cP$ over $\cC$ is a path object. Indeed, the first functor is an isomorphism, and the second functor is a naive fibration by \cref{prop:discfib}.

Finally, condition (5) (naive fibrations that are weak equivalences between fibrant objects are trivial fibrations) is clear.

This proves that there exists a model structure on $\Cat_{/\cC}$ with cofibrations and fibrant objects as described in the statement. The description in item (iii) is by definition, and the one in item (iv) can be deduced from \cref{prop:discfib}.
\end{proof}

\section{The category of elements}\label{section:catofelements}

We now consider the category of elements of a functor $F\colon \cC^\op\to\Set$. This construction gives a functor 
\[ \textstyle \int_\cC\colon [\cC^{\op},\Set]\to \Cat_{/\cC} \]
and in this section we show this is in fact a Quillen equivalence between the projective model structure on $[\cC^{\op},\Set]$ and the model structure $(\Cat_{/\cC})_{\mathrm{discfib}}$ for discrete fibrations constructed in \cref{thm:discreteMS}. 

 We begin by recalling the definition of the category of elements functor.

\begin{defn}
We define a functor $\int_\cC\colon [\cC^{\op},\Set]\to \Cat_{/\cC}$ as follows.
\begin{numbered}[leftmargin=1.1cm]
\item Given a functor $F\colon \cC^{\op}\to \Set$, its \textbf{category of elements} $\int_\cC F$ is the category whose 
\begin{numbered}[start=0]
\item objects are pairs $(c,x)$ of an object $c\in \cC$ and an element $x\in Fc$, 
\item morphisms $(c,x)\to (d,y)$ are morphisms $f\colon c\to d$ in $\cC$ such that $x=Ff(y)$.
\end{numbered}
This comes with a canonical projection $\int_\cC F\to \cC$ onto the first coordinate. 

\item Given a natural transformation $\alpha\colon F\Rightarrow G$ between functors $F,G\colon \cC^{\op}\to \Set$, the induced functor $\int_\cC\alpha\colon \int_\cC F\to \int_\cC G$ over $\cC$ sends
\begin{numbered}[start=0]
\item an object $(c,x)$ to the object $(c,\alpha_c(x))$, 
\item a morphism $f\colon (c,x)\to (d,y)$ to the morphism $f\colon (c,\alpha_c(x))\to (d,\alpha_d(y))$, where $Gf(\alpha_d(y))=\alpha_c(Ff(y))=\alpha_c(x)$.
\end{numbered}
\end{numbered}
\end{defn}

\begin{rem}\label{grothisdiscfib}
Note that for any functor $F\colon \cC^{\op}\to \Set$, the canonical projection $\int_\cC F\to \cC$ is a discrete fibration. 
\end{rem}

The category of elements functor admits a left adjoint, which we now recall. 

\begin{notation} \label{commaoverP}
Given a functor $P\colon \cP\to \cC$ and an object $c\in \cC$, the comma category $c\downarrow P$ is the category whose 
\begin{numbered}[leftmargin=1.1cm]
\item objects are pairs $(a,f)$ of an object $a\in \cP$ and a morphism $f\colon c\to Pa$ in $\cC$, 
\item morphisms $(a',f')\to (a,f)$ are morphisms $g\colon a'\to a$ in $\cP$ such that $f=(Pg)f'$.
\end{numbered}

Note that a morphism $c\to d$ in $\cC$ induces a functor $d\downarrow P\to c\downarrow P$ by pre-composition. Moreover, a functor $\cP\to \cQ$ over $\cC$ between functors $P\colon \cP\to \cC$ and $Q\colon \cQ\to \cC$ induces, for every object $c\in \cC$, a functor $c\downarrow P\to c\downarrow Q$ using the fact that $P=QF$.
\end{notation}

\begin{notation}
We write $\pi_0\colon \Cat\to \Set$ for the left adjoint of the inclusion $\Set\hookrightarrow \Cat$. 
\end{notation}

\begin{defn}
We define a functor $\cT_\cC\colon \Cat_{/\cC}\to [\cC^{\op},\Set]$ as follows.
\begin{numbered}[leftmargin=1.1cm]
    \item Given a functor $P\colon \cP\to \cC$, the functor $\cT_\cC P\colon \cC^{\op}\to \Set$ sends
\begin{numbered}[start=0]
\item an object $c\in \cC$ to the set $\pi_0(c\downarrow P)$,
\item a morphism $c\to d$ in $\cC$ to the induced map $\pi_0(d\downarrow P)\to \pi_0(c\downarrow P)$.
\end{numbered}

\item Given a functor $F\colon \cP\to \cQ$ over $\cC$, the natural transformation $\cT_\cC F\colon \cT_\cC P\Rightarrow \cT_\cC Q$ is the one whose component at an object $c\in \cC$ is given by the induced map $\pi_0(c\downarrow P)\to \pi_0(c\downarrow Q)$. 
\end{numbered}
\end{defn}

The following is a combination of \cite[Proposition 1.10 and Lemmas 1.11 and 1.16]{nima}.

\begin{prop} \label{adjclassicalGC}
There is an adjunction
\begin{tz}
\node[](1) {$[\cC^{\op},\Set]$}; 
\node[right of=1,xshift=1.5cm](2) {$\Cat_{/\cC}$}; 

\draw[->] ($(1.east)-(0,5pt)$) to node[below,la]{$\int_\cC$} ($(2.west)-(0,5pt)$);
\draw[->] ($(2.west)+(0,5pt)$) to node[above,la]{$\cT_\cC$} ($(1.east)+(0,5pt)$);

\node[la] at ($(1.east)!0.5!(2.west)$) {$\bot$};
\end{tz}
such that the right adjoint $\int_\cC$ is fully faithful with essential image the discrete fibrations over $\cC$.
\end{prop}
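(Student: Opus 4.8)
The plan is to establish the adjunction and the characterization of the right adjoint as three separate pieces. First I would verify that $\int_\cC$ is right adjoint to $\cT_\cC$ by constructing the unit and counit. The unit $\eta_F \colon F \Rightarrow \cT_\cC \int_\cC F$ at $c \in \cC$ should send $x \in Fc$ to the class of the object $((c,x), \id_c)$ in $\pi_0(c \downarrow \int_\cC F)$; one checks this is well-defined (two such objects are connected by a zigzag iff the elements agree, using the definition of morphisms in $\int_\cC F$) and natural. The counit $\varepsilon_P \colon \int_\cC \cT_\cC P \to \cP$ over $\cC$ should send a pair $(c, [a,f])$ — with $[a,f] \in \pi_0(c \downarrow P)$ — to the object $a \in \cP$; here one must check this does not depend on the choice of representative $(a,f)$, which follows because a morphism $(a',f') \to (a,f)$ in $c \downarrow P$ is a morphism $a' \to a$ in $\cP$. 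Then I would verify the triangle identities, which reduce to unwinding these definitions. Rather than redo this from scratch, I would simply cite \cite[Proposition 1.10, Lemmas 1.11 and 1.16]{nima} as indicated, and only sketch the unit and counit for the reader's orientation.

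Next, for full faithfulness of $\int_\cC$, it suffices to show the counit $\varepsilon_P$ is an isomorphism whenever $P$ is a discrete fibration — more precisely, that $\eta$ is an isomorphism, or equivalently that $\cT_\cC$ is fully faithful when... actually the cleanest route: a right adjoint is fully faithful if and only if the counit is a natural isomorphism, but here $\int_\cC$ lands in $\Cat_{/\cC}$ and we want full faithfulness as a functor $[\cC^\op,\Set] \to \Cat_{/\cC}$, which holds iff the \emph{unit} $\eta_F \colon F \Rightarrow \cT_\cC \int_\cC F$ is an isomorphism for all $F$. So I would check directly that $\eta_F$ is an isomorphism: the set $\pi_0(c \downarrow \int_\cC F)$ is in bijection with $Fc$, because every object $((d,y),f\colon c \to d)$ of $c \downarrow \int_\cC F$ admits a unique morphism to $((c, f^*y), \id_c)$ — this uses precisely that morphisms in $\int_\cC F$ covering $f$ are determined, since $\int_\cC F \to \cC$ is a discrete fibration (\cref{grothisdiscfib}). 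Hence each connected component has a canonical representative of the form $((c,x),\id_c)$, giving the inverse to $\eta_F$.

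Finally, for the essential image, I would argue in two directions. Every object in the image is a discrete fibration by \cref{grothisdiscfib}. Conversely, given a discrete fibration $P \colon \cP \to \cC$, I would show the counit $\varepsilon_P \colon \int_\cC \cT_\cC P \to \cP$ is an isomorphism of categories over $\cC$. On objects: $\pi_0(c \downarrow P) \cong P^{-1}(c)$ by the same discrete-fibration argument as above (the unique lift of $\id_c$ picks out a canonical representative), so the objects of $\int_\cC \cT_\cC P$ are pairs $(c, a)$ with $Pa = c$, i.e.\ exactly the objects of $\cP$. On morphisms: one checks the hom-sets match, again using uniqueness of lifts. Thus $\varepsilon_P$ is an isomorphism, so $P$ lies in the essential image (up to isomorphism in $\Cat_{/\cC}$, hence genuinely in the essential image).

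The main obstacle is bookkeeping rather than conceptual: one must be careful that the $\pi_0$ appearing in $\cT_\cC$ does not discard information when $P$ is a discrete fibration, i.e.\ that each comma category $c \downarrow P$ is already a \emph{discrete} category (a set) in that case. This is where the discrete fibration hypothesis does all the work — for a general $P$ the comma category genuinely has nontrivial morphisms and $\pi_0$ is lossy, which is exactly why full faithfulness fails outside the essential image — so I would isolate this observation as the crux and lean on it for both the full-faithfulness and essential-image statements.
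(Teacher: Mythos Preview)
The paper does not give a proof of this proposition; it simply records it as a combination of \cite[Proposition 1.10, Lemmas 1.11 and 1.16]{nima}. Your proposal to cite the same source therefore matches the paper exactly. However, the orienting sketch you add contains several genuine errors that you should fix before including it.

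First, your unit and counit are systematically swapped. For $\cT_\cC \dashv \int_\cC$ with $\int_\cC$ the \emph{right} adjoint, the unit lives on $\Cat_{/\cC}$ and has components $\eta_P\colon \cP \to \int_\cC \cT_\cC P$ sending $a \mapsto (Pa,[a,\id_{Pa}])$; the counit lives on $[\cC^\op,\Set]$ and has components $\varepsilon_F\colon \cT_\cC \int_\cC F \Rightarrow F$ sending $[(d,y),f\colon c\to d] \mapsto Ff(y)$. Your first instinct (``a right adjoint is fully faithful iff the counit is an isomorphism'') was correct; your self-correction reverses it. What you then verify --- the bijection $\pi_0(c\downarrow \int_\cC F)\cong Fc$ --- is precisely that the counit is an isomorphism, just mislabelled.

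Second, your proposed map $\int_\cC \cT_\cC P \to \cP$, $(c,[a,f])\mapsto a$, is not well-defined: different representatives $(a,f)$ and $(a',f')$ of the same class in $\pi_0(c\downarrow P)$ have $a\neq a'$ in general, and in any case $a$ lives over $Pa$ rather than over $c$, so this is not a morphism in $\Cat_{/\cC}$. The map that actually exists is the unit $\eta_P$ going the other way, and the essential-image argument should show that $\eta_P$ is an isomorphism when $P$ is a discrete fibration.

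Third, the claim that $c\downarrow P$ is a discrete category when $P$ is a discrete fibration is false: take $P=\id_\cC$ and any $\cC$ with a non-discrete coslice $c\downarrow\cC$. What \emph{is} true, and suffices, is that each connected component of $c\downarrow P$ contains a unique object of the form $(a,\id_c)$ with $a\in P^{-1}(c)$, giving $\pi_0(c\downarrow P)\cong P^{-1}(c)$. Your argument survives once rephrased this way; the discreteness assertion is a stronger claim than you need and happens to be wrong.
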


In order to show that this is a Quillen equivalence, we first recall the definition of the projective model structure on the category of functors $[\cC^\op,\Set]$, which is constructed from the trivial model structure on $\Set$. Note that it also coincides with the trivial model structure on $[\cC^{\op},\Set]$. 

\begin{theorem}\label{projMSset}
    There is a model structure on $[\cC^{\op},\Set]$, called the projective model structure and denoted by $[\cC^{\op},\Set]_{\mathrm{proj}}$, in which
    \begin{rome}[leftmargin=1.1cm]
\item the weak equivalences are the level-wise isomorphisms of sets, 
\item the cofibrations are all maps,
\item the fibrations are all maps. 
\end{rome}
\end{theorem}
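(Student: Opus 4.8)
The plan is to recognize the asserted model structure as the \emph{trivial model structure} on the presheaf category $[\cC^{\op},\Set]$---the analogue for $[\cC^{\op},\Set]$ of \cref{thm:MStrivialCat}---and to verify the model category axioms directly; no substantial input is needed. (Should one also want combinatoriality, it is cofibrantly generated, with generating cofibrations the level-wise representable analogues of $\emptyset\to[0]$ and $[0]\sqcup[0]\to[0]$ and no generating trivial cofibrations, so I would record that at the end.)

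The only observation with any content is that a natural transformation $\alpha\colon F\Rightarrow G$ in $[\cC^{\op},\Set]$ is an isomorphism if and only if every component $\alpha_c\colon Fc\to Gc$ is a bijection, since in that case the level-wise inverses assemble into a natural transformation inverse to $\alpha$, naturality being inherited from that of $\alpha$. Hence the three classes in the statement are exactly: weak equivalences $=$ isomorphisms, cofibrations $=$ all morphisms, fibrations $=$ all morphisms; in particular the trivial cofibrations and the trivial fibrations both coincide with the isomorphisms of $[\cC^{\op},\Set]$. With this identification the axioms are immediate: $[\cC^{\op},\Set]$ is complete and cocomplete with (co)limits computed level-wise; the isomorphisms satisfy two-out-of-three and are closed under retracts in any category; for the lifting axioms, trivial cofibrations and trivial fibrations are isomorphisms, and an isomorphism has both the left and the right lifting property against every morphism; and for the factorization axioms, any $f\colon X\to Y$ factors as $f=\id_Y\circ f$ (a cofibration followed by a trivial fibration) and as $f=f\circ\id_X$ (a trivial cofibration followed by a fibration), both functorially in $f$.

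It remains to justify the name: this model structure is the projective one built level-wise from the trivial model structure on $\Set$ (and, as noted in the text, also the trivial model structure on $[\cC^{\op},\Set]$). Indeed the projective weak equivalences and fibrations are by definition the level-wise ones---here the level-wise isomorphisms and all morphisms---so the projective trivial fibrations are precisely the isomorphisms of presheaves, whence the projective cofibrations, characterized by the left lifting property against trivial fibrations, are all morphisms, in agreement with~(ii). Alternatively, existence could simply be quoted from the general construction of projective model structures over a cofibrantly generated model category, applied to $\Set$ with its trivial model structure, after which (i)--(iii) follow by the same unwinding. The main ``obstacle'' is thus merely to notice that isomorphisms of $\Set$-valued presheaves are detected level-wise; there is no real difficulty, which is consistent with this being a preliminary recollection.
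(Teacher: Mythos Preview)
Your argument is correct. In the paper this theorem is stated without proof as a standard recollection, so there is no proof to compare against; your direct verification of the model category axioms for the trivial model structure on $[\cC^{\op},\Set]$, together with the identification with the projective structure induced from the trivial model structure on $\Set$, is exactly the expected justification and more than the paper provides.
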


\begin{theorem}\label{thm:discreteQuillenequiv}
The adjunction
\begin{tz}
\node[](1) {$[\cC^{\op},\Set]_{\mathrm{proj}}$}; 
\node[right of=1,xshift=2.4cm](2) {$(\Cat_{/\cC})_{\mathrm{discfib}}$}; 

\draw[->] ($(1.east)-(0,5pt)$) to node[below,la]{$\int_\cC$} ($(2.west)-(0,5pt)$);
\draw[->] ($(2.west)+(0,5pt)$) to node[above,la]{$\cT_\cC$} ($(1.east)+(0,5pt)$);

\node[la] at ($(1.east)!0.5!(2.west)$) {$\bot$};
\end{tz}
is a Quillen equivalence.
\end{theorem}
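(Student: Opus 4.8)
The plan is to first check that $\cT_\cC\dashv\int_\cC$ is a Quillen adjunction and then upgrade it to a Quillen equivalence via the standard criterion that a Quillen adjunction is a Quillen equivalence exactly when the right adjoint reflects weak equivalences between fibrant objects and the derived unit is a weak equivalence at every cofibrant object. Since every object of $[\cC^\op,\Set]_{\mathrm{proj}}$ is both cofibrant and fibrant, and every object of $(\Cat_{/\cC})_{\mathrm{discfib}}$ is cofibrant, this reduces to two concrete statements. For the Quillen adjunction, I would verify that the right adjoint $\int_\cC$ preserves fibrations and trivial fibrations: every map of $[\cC^\op,\Set]_{\mathrm{proj}}$ is a fibration, and by \cref{grothisdiscfib} the functor $\int_\cC$ takes values in discrete fibrations, which are the fibrant objects of $(\Cat_{/\cC})_{\mathrm{discfib}}$, so $\int_\cC\alpha$ is a functor between fibrant objects and hence a fibration by \cref{thm:discreteMS}(iv); on both sides the trivial fibrations are precisely the isomorphisms, which $\int_\cC$ visibly preserves. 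In particular $\cT_\cC$ is left Quillen, so it preserves trivial cofibrations.

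The first half of the equivalence criterion is immediate: if $\int_\cC\alpha$ is a weak equivalence between the fibrant objects $\int_\cC F$ and $\int_\cC G$, then by \cref{thm:discreteMS}(iii) it is an isomorphism over $\cC$, and since $\int_\cC$ is fully faithful (\cref{adjclassicalGC}) it reflects isomorphisms, so $\alpha$ is a levelwise isomorphism, i.e., a weak equivalence in $[\cC^\op,\Set]_{\mathrm{proj}}$.

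The second half asks that the unit $\eta_P\colon\cP\to\int_\cC\cT_\cC P$ be a weak equivalence in $(\Cat_{/\cC})_{\mathrm{discfib}}$ for every $P\colon\cP\to\cC$; since $\cT_\cC P$ is already fibrant, this is precisely the derived unit. To prove it, I would choose a naive fibrant replacement $\iota_\cP\colon\cP\to\widehat{\cP}$ and consider the naturality square of $\eta$ along $\iota_\cP$, whose remaining three edges are $\iota_\cP$, $\eta_{\widehat{\cP}}$, and $\int_\cC\cT_\cC\iota_\cP$. The edge $\iota_\cP$ is a weak equivalence: being anodyne it is a trivial cofibration, as one sees from \cref{defn:we} by taking $\widehat{\cP}$ (with the identity) as its own naive fibrant replacement. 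The edge $\eta_{\widehat{\cP}}$ is an isomorphism because $\widehat{\cP}$ is a discrete fibration and, by \cref{adjclassicalGC}, the adjunction restricts to an equivalence onto the discrete fibrations, so its unit is invertible there. The edge $\int_\cC\cT_\cC\iota_\cP$ is an isomorphism because $\cT_\cC$, being left Quillen, sends the trivial cofibration $\iota_\cP$ to a trivial cofibration in $[\cC^\op,\Set]_{\mathrm{proj}}$ --- hence to an isomorphism --- which $\int_\cC$ then preserves. Commutativity of the square exhibits $\eta_P$ as a composite of $\iota_\cP$ with two isomorphisms, hence as a weak equivalence, which completes the proof.

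I expect this last step to be the main obstacle: $\eta_P$ is an isomorphism only when $P$ is already a discrete fibration, so one genuinely has to use the model structure, and the naturality-square trick is exactly what reduces the general case to the classical equivalence of \cref{adjclassicalGC} --- morally, naive fibrant replacement agrees, up to the weak equivalence $\iota_\cP$, with $\int_\cC\cT_\cC$. One could bypass the model-theoretic input here by instead showing directly that $\eta_P$ is a final functor (equivalently, an anodyne extension), identifying each comma category $(c,[a,f])\downarrow\eta_P$ with the connected component of $(a,f)$ in $c\downarrow P$, but the route above avoids using the fact that anodyne extensions are exactly the final functors.
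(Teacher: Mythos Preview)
Your proof is correct and follows essentially the same route as the paper's: both verify that $\int_\cC$ is right Quillen by the same argument, and both deduce that the unit $\eta_P$ is a weak equivalence via the naturality square for $\eta$ along a fibrant replacement $\iota_\cP$, using full faithfulness of $\int_\cC$ to handle the fibrant case. The only cosmetic differences are that the paper checks the derived counit is an isomorphism directly (rather than phrasing it as ``$\int_\cC$ reflects weak equivalences between fibrant objects''), and it invokes Ken Brown's lemma to see the right-hand edge of the square is a weak equivalence, whereas you observe more sharply that it is actually an isomorphism since $\cT_\cC$ sends the trivial cofibration $\iota_\cP$ to a trivial cofibration in a model structure where those are exactly the isomorphisms.
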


\begin{proof}
To show that $\int_\cC$ is right Quillen, we must show that it preserves fibrations and trivial fibrations. As mentioned in \cref{grothisdiscfib}, the functor $\int_\cC$ sends every object in $[\cC^{\op},\Set]$ to a discrete fibration; hence it preserves fibrations by \cref{prop:discfib}, and trivial fibrations as every functor preserves isomorphisms.

As the functor $\int_\cC$ is fully faithful, the (derived) counit is an isomorphism. It remains to show that the (derived) unit is a weak equivalence. First, note that the component of the unit at a discrete fibration $\cP\to \cC$ is an isomorphism, as $\cP\to \cC$ is in the essential image of $\int_\cC$. Now, given any functor $P\colon \cP\to \cC$, consider a fibrant replacement $\cP\to \widehat{\cP}$ in $(\Cat_{/\cC})_{\mathrm{discfib}}$ with target $\widehat{P}\colon \widehat{\cP}\to \cC$. This gives a commutative square of functors over $\cC$,
\begin{tz}
\node[](1) {$\cP$}; 
\node[right of=1,xshift=.6cm](2) {$\int_\cC \cT_\cC(P)$}; 
\node[below of=1](1') {$\widehat{\cP}$};
\node[below of=2](2') {$\int_\cC \cT_\cC(\widehat{P})$}; 

\draw[->] (1) to (2); 
\draw[->] (1) to node[below,la, sloped]{$\sim$} (1'); 
\draw[->] (2) to node[above,la, sloped,yshift=-2pt]{$\sim$} (2');
\draw[->] (1') to node[below,la]{$\cong$} (2');
\end{tz}
where the bottom functor is an isomorphism since $\widehat{P}\colon \widehat{\cP}\to \cC$ is a discrete fibration, and the left-hand functor is a weak equivalence by construction. Moreover, the right-hand functor is also a weak equivalence. Indeed, the fact that all objects in $(\Cat_{/\cC})_{\mathrm{discfib}}$ are cofibrant and all objects in $[\cC^{\op},\Set]_{\mathrm{proj}}$ are fibrant ensures that both $\cT_\cC$ and $\int_\cC$ preserve weak equivalences by Ken Brown's lemma \cite[Lemma 1.1.12]{hovey}. Hence by $2$-out-of-$3$, the component of the unit at $P\colon \cP\to \cC$ is also a weak equivalence, as desired. 
\end{proof}

\section{Model structure for Grothendieck fibrations}\label{section:cartesian}

The aim of this section is to prove the analogous version of \cref{thm:discreteMS}, where discrete fibrations are replaced by Grothendieck fibrations. Let us start by recalling the relevant definitions, as well as some useful properties. We refer the reader to \cite[\S 3.1.1]{vistoli} for more details.

\begin{defn}
    Let $P\colon \cP\to \cC$ be a functor. A morphism $g\colon a'\to a$ in $\cP$ is \textbf{$P$-cartesian} if, for every morphism $h\colon b\to a$ in $\cP$ and every morphism $f\colon Pb\to Pa'$ in $\cC$ such that $Ph=(Pg)f$, there is a unique $P$-lift $h'\colon b\to a'$ of~$f$ such that $h=gh'$. 

    A functor $P\colon \cP\to \cC$ is a \textbf{Grothendieck fibration} if, for every object $a\in \cP$ and every morphism $f\colon b\to Pa$ in $\cC$, there is a $P$-cartesian lift $g\colon a'\to a$ of $f$; i.e., a $P$-lift of~$f$ that is $P$-cartesian. 
\end{defn}

\begin{prop} \label{propPcart}
For any functor $P\colon \cP\to \cC$, the following hold. 
\begin{rome}[leftmargin=1.1cm] 
\item The composite of two $P$-cartesian morphisms is $P$-cartesian.
\item A morphism $g$ in $\cP$ is an isomorphism if and only if it is $P$-cartesian and $Pg$ is an isomorphism in $\cC$. 
    \item If $g\colon a'\to a$ and $g'\colon a''\to a$ are two $P$-cartesian lifts of the same morphism $Pg=Pg'$, there is a (unique) isomorphism $h\colon a''\xrightarrow{\cong} a'$ that is a $P$-lift of the identity at $Pa''=Pa'$ such that $g'=g h$. 
\end{rome}
\end{prop}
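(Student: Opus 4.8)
The plan is to verify each of the three statements directly from the definition of $P$-cartesian morphism, using only the universal property they encode.

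\textbf{Part (i).} Suppose $g\colon a'\to a$ and $g'\colon a''\to a'$ are both $P$-cartesian; I want to show $gg'\colon a''\to a$ is $P$-cartesian. So take $h\colon b\to a$ in $\cP$ and $f\colon Pb\to Pa''$ in $\cC$ with $Ph=(P(gg'))f = (Pg)(Pg')f$. Applying the universal property of $g$ to the morphism $h$ and the morphism $(Pg')f\colon Pb\to Pa'$, I get a unique $P$-lift $k\colon b\to a'$ of $(Pg')f$ with $h=gk$. Now $Pk=(Pg')f$, so applying the universal property of $g'$ to $k$ and $f$, I get a unique $P$-lift $h'\colon b\to a''$ of $f$ with $k=g'h'$; then $h=gk=gg'h'$, giving existence. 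For uniqueness, if $h''$ is another $P$-lift of $f$ with $h=gg'h''$, then $g'h''$ is a $P$-lift of $(Pg')f$ with $h=g(g'h'')$, so $g'h''=k$ by uniqueness for $g$, and then $h''=h'$ by uniqueness for $g'$.

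\textbf{Part (ii).} If $g$ is an isomorphism, then $Pg$ is an isomorphism, and $g$ is $P$-cartesian: given $h\colon b\to a$ and $f\colon Pb\to Pa'$ with $Ph=(Pg)f$, the only candidate lift is $h'=g^{-1}h$, and one checks $Ph'=(Pg)^{-1}Ph=f$ and $h=gh'$, with uniqueness forced by $h=gh'\Rightarrow h'=g^{-1}h$. Conversely, suppose $g\colon a'\to a$ is $P$-cartesian and $Pg$ is an isomorphism. Apply the universal property of $g$ to the morphism $h=\id_a\colon a\to a$ and the morphism $f=(Pg)^{-1}\colon Pa\to Pa'$ (note $P(\id_a)=\id_{Pa}=(Pg)f$): this yields a unique $P$-lift $h'\colon a\to a'$ of $(Pg)^{-1}$ with $\id_a=gh'$. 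It remains to see $h'g=\id_{a'}$; since $g(h'g)=(gh')g=g=g\id_{a'}$ and both $h'g$ and $\id_{a'}$ are $P$-lifts of $P(h'g)=\id_{Pa'}=P(\id_{a'})$, the uniqueness clause in the cartesian property of $g$ (applied to $h=g$, $f=\id_{Pa'}$) forces $h'g=\id_{a'}$. Hence $g$ is an isomorphism.

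\textbf{Part (iii).} Given $P$-cartesian lifts $g\colon a'\to a$ and $g'\colon a''\to a$ with $Pg=Pg'$, apply the universal property of $g$ to $h=g'\colon a''\to a$ and $f=\id_{Pa''}=\id_{Pa'}\colon Pa''\to Pa'$ (here $Pg'=(Pg)\id_{Pa'}$): this gives a unique $P$-lift $h\colon a''\to a'$ of $\id_{Pa'}$ with $g'=gh$. Symmetrically, swapping the roles of $g$ and $g'$, one gets a $P$-lift $h''\colon a'\to a''$ of $\id_{Pa''}$ with $g=g'h''$. Then $g(hh'')=g'h''=g$ and $gh''h$... more directly: $g'(h''h)... $ let me argue as in (ii): $g(hh'')=(gh)h''=g'h''=g$, and $hh''$ and $\id_{a'}$ are both $P$-lifts of $\id_{Pa'}$, so uniqueness in the cartesian property of $g$ gives $hh''=\id_{a'}$; symmetrically $h''h=\id_{a''}$. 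Thus $h$ is an isomorphism, it is a $P$-lift of the identity, and $g'=gh$; uniqueness of such $h$ is exactly the uniqueness clause already used. (Alternatively, (iii) follows by combining (ii) with (i): $h$ is $P$-cartesian as a "factor" of the $P$-cartesian $g'$ through $g$—but the direct argument is cleaner.)

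\textbf{Main obstacle.} None of this is deep; the only thing to be careful about is bookkeeping the source/target objects and the equations $Ph=(Pg)f$ correctly when invoking the universal property, and making sure that in (ii) and (iii) the inverse is produced on \emph{both} sides (one side is immediate from the existence part of the universal property, the other side requires a second application of the \emph{uniqueness} part). I would state Part (i) first, then derive (ii) and (iii), reusing the same "two-sided inverse via uniqueness" trick in both.
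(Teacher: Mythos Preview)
Your proof is correct; the paper itself does not prove this proposition but simply refers the reader to \cite[\S 3.1.1]{vistoli}, so your direct verification from the universal property is exactly the standard argument one would find there. One cosmetic point: in Part~(iii) your write-up trails off mid-sentence (``and $gh''h$\ldots more directly: $g'(h''h)\ldots$'') before you state the clean version; in a final draft you should delete the false starts and just give the argument once, using the uniqueness clause of $g$ to get $hh''=\id_{a'}$ and of $g'$ to get $h''h=\id_{a''}$.
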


\begin{lem} \label{trivfibandcartmor}
Let $F\colon \cP\to \cQ$ be a trivial fibration in $\Cat_{/\cC}$ between functors $P\colon \cP\to \cC$ and $Q\colon \cQ\to \cC$. Then a morphism $g$ in $\cP$ is $P$-cartesian if and only if $Fg$ in $\cQ$ is $Q$-cartesian.
\end{lem}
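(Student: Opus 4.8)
The plan is to exploit the fact that a trivial fibration $F\colon\cP\to\cQ$ in $\Cat_{/\cC}$ is a surjective-on-objects, fully faithful functor (since trivial fibrations in the trivial model structure on $\Cat$, and hence in the slice, are precisely the functors with the right lifting property against the generating cofibrations $\emptyset\to[0]$, $[0]\sqcup[0]\to[0]$, $[0]\sqcup[0]\to[1]$, and $[1]\sqcup_{[0]\sqcup[0]}[1]\to[1]$; unpacking these gives surjectivity on objects and full faithfulness). In particular $F$ reflects isomorphisms and, being fully faithful, induces bijections on all relevant hom-sets, and it satisfies $QF=P$. I would first record these consequences, since every step below is a diagram-chase through them.

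For the forward direction, suppose $g\colon a'\to a$ is $P$-cartesian; I want $Fg$ to be $Q$-cartesian. Take any $h\colon b\to Fa$ in $\cQ$ and any $f\colon Qb\to Qa'$ in $\cC$ with $Qh=(QFg)f$. Since $F$ is surjective on objects, write $b=F\tilde b$; since $F$ is full, lift $h$ to $\tilde h\colon \tilde b\to a$ with $F\tilde h=h$. Then $P\tilde h=QF\tilde h=Qh=(QFg)f=(Pg)f$, so $P$-cartesianness of $g$ gives a unique $P$-lift $\tilde h'\colon\tilde b\to a'$ of $f$ with $g\tilde h'=\tilde h$. Set $h':=F\tilde h'$; then $Qh'=QF\tilde h'=P\tilde h'=f$ and $(Fg)h'=F(g\tilde h')=F\tilde h=h$, giving existence of the required lift. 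For uniqueness, if $h''\colon b\to Fa'$ is another $Q$-lift of $f$ with $(Fg)h''=h$, use fullness to lift $h''$ to $\tilde h''\colon\tilde b\to a'$; then $P\tilde h''=Qh''=f$ and $F(g\tilde h'')=(Fg)h''=h=F\tilde h$, so by faithfulness $g\tilde h''=\tilde h$, whence $\tilde h''=\tilde h'$ by uniqueness in the $P$-cartesian property, and applying $F$ gives $h''=h'$. The one point requiring a little care is that in applying $F$'s fullness to lift $h''$ along $F\tilde b\to F a'$ I should make sure the lift can be chosen, which is immediate since $F$ is full as a functor (not merely full on the relevant slices); I would state this cleanly up front.

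For the converse, suppose $Fg$ is $Q$-cartesian; I want $g$ to be $P$-cartesian. Given $h\colon b\to a$ in $\cP$ and $f\colon Pb\to Pa'$ in $\cC$ with $Ph=(Pg)f$, apply $F$: then $Fh\colon Fb\to Fa$ and $Q(Fh)=Ph=(Pg)f=(QFg)f$, so $Q$-cartesianness of $Fg$ produces a unique $Q$-lift $k\colon Fb\to Fa'$ of $f$ with $(Fg)k=Fh$. By fullness and faithfulness of $F$, $k=Fh'$ for a unique $h'\colon b\to a'$, and then $Ph'=Qk=f$ and $F(gh')=(Fg)(Fh')=(Fg)k=Fh$, so $gh'=h$ by faithfulness. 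Uniqueness of $h'$ as a $P$-lift of $f$ with $gh'=h$ follows because any such $h'$ has $Fh'$ a $Q$-lift of $f$ over $Fg$ with the right composite, hence equals $k$, hence $h'$ is determined by faithfulness. I do not expect a genuine obstacle here — the whole lemma is a formal consequence of $F$ being a fully faithful, surjective-on-objects functor commuting with the projections to $\cC$ — but the step deserving the most attention is the very first one: making precise (perhaps by citing the characterization of trivial fibrations in $\Cat$) that a trivial fibration in $\Cat_{/\cC}$ is exactly such a functor, so that all the lifting and uniqueness manipulations are justified.
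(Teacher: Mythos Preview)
Your argument is correct and is exactly the diagram chase that the paper's one-line proof (``this follows from the fact that $F$ is surjective on objects and fully faithful'') leaves implicit; the paper simply cites the characterization of trivial fibrations in the \emph{canonical} model structure on $\Cat$ (\cref{chartrivfib}) and omits the details you spell out.

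One small correction to your parenthetical: you refer to the \emph{trivial} model structure and list $[0]\sqcup[0]\to[0]$ among the generating cofibrations, but lifting against that map would force $F$ to be \emph{injective} on objects as well, making it an isomorphism. The relevant model structure here is the canonical one (cofibrations are the injective-on-objects functors), whose generating cofibrations are $\emptyset\to[0]$, $[0]\sqcup[0]\to[1]$, and $[1]\sqcup_{[0]\sqcup[0]}[1]\to[1]$; this yields precisely surjective-on-objects plus fully faithful, as you need.
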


\begin{proof}
This follows from the fact that, if $F$ is a trivial fibration, then it is surjective on objects and fully faithful on morphisms by \cref{chartrivfib}.
\end{proof}

The desired model structure for Grothendieck fibrations will be related to the canonical model structure on~$\Cat$ (often referred to as ``categorical'' or ``folk''). The existence of this model structure, which is nowadays widely known, is due to Joyal and Tierney; the reader may find the details in a note by Rezk \cite{rezk}. We first recall the definitions of the relevant classes of morphisms.

\begin{defn}
A functor $F\colon \cC\to \cD$ is
\begin{rome}[leftmargin=1.1cm]
\item an \textbf{equivalence of categories} if there is a functor $G\colon \cD\to \cC$ and natural isomorphisms $\id_\cC\cong GF$ and $FG\cong \id_\cD$,
\item an \textbf{isofibration} if, for every object $c\in \cC$ and every isomorphism $g\colon d\xrightarrow{\cong} Fc$ in $\cD$, there is an isomorphism $f\colon c'\xrightarrow{\cong} c$ in $\cC$ such that $Ff=g$. 
\end{rome}
\end{defn}

\begin{theorem} \label{thm:MSCat}
There is a combinatorial model structure on the category $\Cat$, called the \emph{canonical model structure}, in which
\begin{rome}[leftmargin=1.1cm]
\item the weak equivalences are the equivalences of categories, 
\item the cofibrations are the injective-on-objects functors,
\item the fibrations are the isofibrations. 
\end{rome}
\end{theorem}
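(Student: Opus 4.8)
This is the classical Joyal--Tierney model structure, so I would present a self-contained argument via a recognition theorem for cofibrantly generated model structures, e.g.\ \cite[Theorem 2.1.19]{hovey}; the ``combinatorial'' strengthening is then automatic, since $\Cat$ is locally finitely presentable. The first task is to produce the generating data and unwind lifting properties. For generating cofibrations take
\[ I = \{\; \emptyset \to [0], \quad [0]\sqcup[0]\to[1], \quad [1]\sqcup_{[0]\sqcup[0]}[1]\to[1] \;\}, \]
and for generating trivial cofibrations take $J = \{[0]\to\cJ\}$, where $\cJ$ is the free-standing isomorphism and the map picks out one of its two objects. A direct computation identifies the right lifting property against $J$ with the class of isofibrations; the right lifting property against $I$ with the functors that are surjective on objects and fully faithful; and the left lifting property against the latter with the injective-on-objects functors. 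One also checks, by a short argument with the lifting characterization of isofibrations, that a functor is surjective on objects and fully faithful if and only if it is both an isofibration and an equivalence of categories, so that the $I$-injectives are exactly the intersection of $\cW$ with the $J$-injectives, where $\cW$ denotes the equivalences of categories.

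With this in place, most hypotheses of the recognition theorem are immediate: $\Cat$ is complete and cocomplete; every object is small, so the small object argument applies to $I$ and $J$; $\cW$ satisfies 2-out-of-3 and is closed under retracts; and the identification of $I$-injectives with $\cW\cap(J\text{-injectives})$ was just checked. The one substantive condition is that every relative $J$-cell complex is an injective-on-objects equivalence. For this I would compute a pushout of $[0]\to\cJ$ explicitly: it freely adjoins a new object together with a single isomorphism to an old object, and adds no other morphisms, so it is injective on objects, fully faithful, and essentially surjective, hence an injective-on-objects equivalence; the same explicit description (hom-sets between ``old'' objects are never changed, and every object acquired at any stage is isomorphic to one already present) shows these properties persist under the pushouts and transfinite composites that build arbitrary relative $J$-cell complexes. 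Once the recognition theorem applies, the classes described in items (i)--(iii) are exactly the $I$-cofibrations, $\cW$, and the $J$-injectives, and the lifting axioms -- in particular that injective-on-objects equivalences lift against isofibrations -- come for free.

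The main obstacle is the bookkeeping in this last step: verifying that the ``adjoin an isomorphism'' operation, iterated transfinitely, still produces an equivalence of categories (equivalently, that each stage inclusion is a strong deformation retract inclusion, a property stable under the relevant colimits). Everything else is a routine unwinding of definitions. An alternative would be to right-transfer the model structure along the nerve $N\colon\Cat\to\sSet$ from the Joyal model structure, but checking the transfer criterion is of comparable difficulty and draws on nontrivial facts about quasi-categories, so the hands-on route above seems preferable.
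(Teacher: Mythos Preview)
Your proposal is correct and follows the standard route (essentially Rezk's write-up of the Joyal--Tierney argument). However, the paper does not actually prove this theorem: it merely records it as a known result, attributing it to Joyal and Tierney and pointing the reader to Rezk's note \cite{rezk} for details. So there is no ``paper's own proof'' to compare against; your self-contained argument via Hovey's recognition theorem is exactly the kind of verification the paper is outsourcing to the literature.

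One minor notational point if you want to align with the paper: the free-standing isomorphism is denoted $\bI$ there rather than $\cJ$.
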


By unpacking the definitions, one can explicitly describe the trivial fibrations as follows.

\begin{lem} \label{chartrivfib}
The functor $F\colon \cC\to \cD$ is a trivial fibration if and only if it is surjective on objects and fully faithful on morphisms.
\end{lem}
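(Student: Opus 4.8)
The plan is to use the standard fact that in a cofibrantly generated model structure the trivial fibrations are exactly the maps with the right lifting property against a generating set of cofibrations, and then to unwind this lifting property. For the canonical model structure on $\Cat$ of \cref{thm:MSCat} one may take as generating cofibrations the three injective-on-objects functors
\[ \emptyset\to [0], \qquad [0]\sqcup [0]\to [1], \qquad [1]\sqcup_{[0]\sqcup [0]}[1]\to [1], \]
where the last map collapses the two parallel arrows of the ``walking parallel pair'' $[1]\sqcup_{[0]\sqcup[0]}[1]$ onto the nonidentity arrow of $[1]$ (see \cite{rezk}). So $F\colon \cC\to\cD$ is a trivial fibration if and only if it has the right lifting property against these three functors, and the proof reduces to translating each of the three lifting conditions.

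For the forward direction, assume $F$ is a trivial fibration. A lifting square against $\emptyset\to [0]$ is the datum of an object $d\in\cD$, and a lift is an object $c\in\cC$ with $Fc=d$; so $F$ is surjective on objects. A lifting square against $[0]\sqcup [0]\to [1]$ is the datum of objects $c_0,c_1\in\cC$ together with a morphism $u\colon Fc_0\to Fc_1$ in $\cD$, and a lift is a morphism $v\colon c_0\to c_1$ in $\cC$ with $Fv=u$; so $F$ is full. A lifting square against $[1]\sqcup_{[0]\sqcup [0]}[1]\to [1]$ is the datum of a parallel pair $a,b\colon c_0\to c_1$ in $\cC$ with $Fa=Fb$, and a lift is a single morphism $w\colon c_0\to c_1$ restricting to both $a$ and $b$ along the collapse map, which forces $a=b$; so $F$ is faithful. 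Hence $F$ is surjective on objects and fully faithful.

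Conversely, suppose $F$ is surjective on objects and fully faithful. Most economically, the three properties just established immediately supply lifts against each of the three generating cofibrations, so $F$ has the right lifting property against a generating set of cofibrations, hence against all cofibrations, hence is a trivial fibration. Alternatively one argues directly against an arbitrary injective-on-objects functor $A\hookrightarrow B$: given a commutative square, define the lift on the objects of $B$ not in the image of $A$ by choosing $F$-preimages (using surjectivity on objects), extend to morphisms using full faithfulness of $F$ (fullness provides the arrows, faithfulness makes the assignment well defined and functorial), and check it restricts to the prescribed functor on $A$ and lies over the bottom map.

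The whole argument is essentially bookkeeping; the only step needing care is fixing a convenient generating set for the canonical model structure and, in particular, recognizing $[1]\sqcup_{[0]\sqcup [0]}[1]$ as the walking parallel pair so that lifting against it expresses faithfulness rather than something weaker. If one wishes to avoid invoking a specific generating set, the direct lift construction in the converse direction together with the three explicit test squares in the forward direction already gives a self-contained proof.
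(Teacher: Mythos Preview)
Your proof is correct and is precisely the ``unpacking of the definitions'' that the paper alludes to; the paper itself does not supply a proof, merely stating the lemma as a direct consequence of the description of the canonical model structure. Your choice of generating cofibrations matches the set the paper uses elsewhere (e.g.\ in the proofs of \cref{thm:discreteMS} and \cref{charmtrivfib}), so there is no discrepancy in approach.
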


As explained in the introduction, Grothendieck fibrations over $\cC$ cannot be directly encoded by a lifting condition in the category $\Cat_{/\cC}$. Instead, we use \emph{marked categories} so we are able to remember which morphisms are the $P$-cartesian morphisms corresponding to a given functor $P\colon\cP\to\cC$.

\begin{defn}
A \textbf{marked category} is a pair $(\cP,E)$ of a category $\cP$ together with a subset $E$ of the set of morphisms of $\cP$ containing the identities. A morphism in $E$ is called \textbf{marked}.

A \textbf{marked functor} $(\cP,E_\cP)\to (\cQ,E_\cQ)$ is a functor $F\colon \cP\to \cQ$ that preserves the marking, i.e., such that, for every $f\in E_\cP$, we have $Ff\in E_\cQ$. 

We denote by $\Cat^+$ the category of marked categories and marked functors. 
\end{defn}

\begin{rem}\label{forgetmarkings}
The forgetful functor $U\colon \Cat^+\to \Cat$ has both a left and a right adjoint. The left adjoint $(-)^\flat\colon \Cat\to \Cat^+$ endows a category with the minimal marking; namely, only identities are marked. The right adjoint $(-)^\sharp\colon \Cat\to \Cat^+$ endows a category with the maximal marking; that is, all morphisms are marked. 
\end{rem}

\begin{defn}
    We denote by $\Cat^+_{/\cC^\sharp}$ the slice category over $\cC^\sharp$; that is, the category whose
    \begin{numbered}[leftmargin=1.1cm]
        \item objects are marked functors $(\cP,E)\to\cC^\sharp$,
        \item morphisms $(P\colon (\cP,E_\cP)\to\cC^\sharp)\to (Q\colon (\cQ,E_\cQ)\to \cC^\sharp)$ consist of marked functors $F\colon (\cP,E_\cP)\to(\cQ,E_\cQ)$ such that $QF=P$.
    \end{numbered}
    Note that a marked functor $(\cP,\cE)\to \cC^\sharp$ is simply a functor $\cP\to \cC$, as all morphisms in~$\cC^\sharp$ are marked.
\end{defn}

Our goal is to construct a model structure on $\Cat^+_{/\cC^\sharp}$ in which the fibrant objects $P\colon (\cP,E)\to \cC^\sharp$ are the Grothendieck fibrations $P\colon \cP\to \cC$ with marking $E$ given by the set of $P$-cartesian morphisms. We call such a functor a \textbf{cart-marked Grothendieck fibration}. Once again, our strategy is to apply \cite[Theorem 2.8]{GMSV}, and to this end, we now introduce the required classes of morphisms.

\begin{notation}
    We use $\bI$ to denote the free-living isomorphism, and $\Lambda^2[2]\hookrightarrow [2]$ to denote the following inclusion of categories.
    \begin{tz}
        \node[](1) {$1$}; 
        \node[below left of=1](0) {$0$}; 
        \node[below right of=1](2) {$2$}; 
        \draw[->] (0) to (2); 
        \draw[->] (1) to (2);

        \node[right of=1,xshift=3cm](1') {$1$}; 
        \node[below left of=1'](0) {$0$}; 
        \node[below right of=1'](2) {$2$}; 
        \draw[->] (0) to (2); 
        \draw[->] (1') to (2);
        \draw[->] (0) to (1'); 
        \node at ($(0)!0.5!(2)+(0,.4cm)$) {\rotatebox{90}{$=$}};
        \node at ($(1)!0.5!(1')+(0,-.6cm)$) {$\hookrightarrow$};
    \end{tz}
\end{notation}

\begin{defn}
The class $\an^+$ of \textbf{marked anodyne extensions} is the smallest weakly saturated class of marked functors over $\cC^\sharp$ which contains the following marked functors:
\begin{rome}[leftmargin=1.1cm]
\item the inclusion $[0]\xrightarrow{1} [1]^\sharp\to \cC^\sharp$,  
\item the inclusion $(\Lambda^2[2],\{ 1\to 2\})\to ([2],\{ 1\to 2\})\to \cC^\sharp$,
\item the functor $([2]\sqcup_{\Lambda^2[2]} [2],\{ 1\to 2\})\to ([2],\{ 1\to 2\})\to \cC^\sharp$,
\item the inclusion $\bI^\flat\to \bI^\sharp\to \cC^\sharp$,
\item the inclusion $([2],\{ 0\to 1, 1\to 2\})\to [2]^\sharp\to \cC^\sharp$, 
\end{rome}
where we use the convention that identities are not explicitly depicted in the sets of marked morphisms.
\end{defn}

\begin{defn}
    The class $\nfib^+$ of \textbf{marked naive fibrations} consists of the morphisms in $\Cat^+_{/\cC^\sharp}$ which have the right lifting property with respect to all morphisms in $\an^+$.
\end{defn}

By unpacking the lifting conditions, we can describe the naive fibrant objects and the naive fibrations between them. The reader may compare to the much simpler case of \cref{fibobj}.

\begin{prop}
A marked functor $P\colon (\cP,E)\to \cC^\sharp$ is naive fibrant if and only if it is a cart-marked Grothendieck fibration.  
\end{prop}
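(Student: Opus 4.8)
The plan is to unpack the right lifting property of $P\colon(\cP,E)\to\cC^\sharp$ against each of the five generating marked anodyne extensions (i)--(v), translate each into a concrete structural condition on $P$ and on the marking $E$, and then check that the conjunction of these conditions is exactly the statement ``$P$ is a Grothendieck fibration and $E$ is the set of $P$-cartesian morphisms''. Since $\cC^\sharp$ has all morphisms marked, a lift against a map $A\to B\to\cC^\sharp$ in $\Cat^+_{/\cC^\sharp}$ is the same as a lift in $\Cat^+$ of the square whose bottom edge is $B\to\cC^\sharp$; the constraint $QF=P$ forces the underlying functor of any lift, so the content is entirely about existence and uniqueness of appropriately-marked functors out of the (finite) categories $[1]$, $[2]$, $\bI$, and their pushouts.

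The individual translations I expect are as follows. Lifting against (i), the inclusion $[0]\xrightarrow{1}[1]^\sharp\to\cC^\sharp$, says: for every object $a\in\cP$ and every morphism $f\colon c\to Pa$ in $\cC$, there is a marked morphism $g\colon a'\to a$ in $\cP$ with $Pg=f$ --- i.e. $P$ admits marked lifts of all morphisms, with $a$ playing the role of the target (note the inclusion picks out the object $1$, the codomain). Lifting against (ii), $(\Lambda^2[2],\{1\to2\})\to([2],\{1\to2\})$, encodes the existence-and-uniqueness part of $P$-cartesianness: given $h\colon b\to a$ (the edge $0\to2$), a marked $g\colon a'\to a$ (the edge $1\to2$), and a factorization of $Ph$ through $Pg$ in $\cC$ (the missing edge $0\to1$ downstairs, which is automatic since $\cC^\sharp$ is maximally marked), there is a unique $h'\colon b\to a'$ over that factorization with $gh'=h$. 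Lifting against (iii), the pushout $[2]\sqcup_{\Lambda^2[2]}[2]$ collapsing to $[2]$, should be the uniqueness clause refined to two such fillers agreeing --- it forces that the $h'$ produced is genuinely unique, or equivalently that two fillers of a $\Lambda^2[2]$ horn with the $1\to2$ edge marked must coincide. Lifting against (iv), $\bI^\flat\to\bI^\sharp$, says every isomorphism of $\cP$ is marked. Lifting against (v), $([2],\{0\to1,1\to2\})\to[2]^\sharp$, says marked morphisms are closed under composition (if $0\to1$ and $1\to2$ are marked then so is $0\to2$, hence all of $[2]$ is marked). Combining (ii)+(iii) gives precisely: a marked morphism is $P$-cartesian; combining that with (i) gives: every morphism of $\cC$ has a marked (hence $P$-cartesian) lift, so $P$ is a Grothendieck fibration; and then (iv)+(v) together with \cref{propPcart}(i)--(ii) should pin down that $E$ contains \emph{all} $P$-cartesian morphisms, not just enough of them --- the composition-closure (v) and isomorphism-inclusion (iv) being exactly the closure properties that $P$-cartesian morphisms enjoy by \cref{propPcart}, so that the smallest such $E$ making $P$ naive fibrant is forced to be \emph{all} $P$-cartesians. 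One direction of the final ``if and only if'' (cart-marked Grothendieck fibration $\Rightarrow$ naive fibrant) then follows by checking each generating map lifts, using \cref{propPcart} for the closure conditions; the other direction assembles the unpacked conditions.

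The main obstacle I anticipate is the bookkeeping around the \emph{uniqueness} half of cartesianness and showing that the marking is exactly $E=\{P\text{-cartesian morphisms}\}$ rather than merely a sub- or super-class. Lifting (ii) only gives that \emph{marked} morphisms admit the cartesian filling property against maps whose relevant edge is marked, so a priori a marked $g$ need only be ``cartesian relative to marked test morphisms''; one has to argue --- presumably using (iv) and (v), i.e. that all isomorphisms and all composites of marked morphisms are marked, so that enough test morphisms are available --- that this relative property upgrades to honest $P$-cartesianness. Conversely, showing every $P$-cartesian morphism \emph{must} be marked requires using that $P$-cartesians are closed under the operations that (iv) and (v) assert the marking is closed under, plus that the structure maps (v) and (iii) are in $\an^+$, to propagate markedness; here \cref{propPcart}(iii) (uniqueness of cartesian lifts up to iso over the identity) is the tool that lets one transport markedness along such isomorphisms. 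I would organize the write-up as: first dispose of (iv) and (v) as ``$E$ contains all isomorphisms and is closed under composition'', then handle (i) as ``lifts exist'', then (ii)+(iii) as ``marked $\Leftrightarrow$ $P$-cartesian'', and finally observe the conjunction is the claim; I expect the delicate point to be precisely the last equivalence in (ii)+(iii), where the subtlety of ``marked morphisms as test objects'' lives.
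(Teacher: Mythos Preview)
Your proposal is correct and follows essentially the same route as the paper: (i) gives marked lifts, (ii)+(iii) give that marked morphisms are $P$-cartesian, and then (iv)+(v) combined with \cref{propPcart}(iii) show every $P$-cartesian morphism is marked, with the converse a direct check.

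One clarification that will simplify your write-up: your anticipated ``main obstacle'' is not actually present. In the generating map (ii), the edge $0\to 2$ of $\Lambda^2[2]$ is \emph{unmarked} (the marking is $\{1\to 2\}$ only), so the test morphism $h\colon b\to a$ is an arbitrary morphism of $\cP$, not a marked one. Thus (ii)+(iii) already give that every marked morphism is $P$-cartesian in the full sense, with no need to invoke (iv) and (v) or to upgrade any ``relative'' cartesianness. The role of (iv) and (v) is solely in the other direction, exactly as the paper uses them: given a $P$-cartesian $g'$, take a marked lift $g$ of $Pg'$ via (i); since both are $P$-cartesian, \cref{propPcart}(iii) yields an isomorphism $h$ with $g'=gh$; then (iv) marks $h$ and (v) marks the composite $g'$.
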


\begin{proof}
We first show that if $P$ satisfies the lifting condition with respect to the generating anodyne extensions in (i)-(iii), then it is a Grothendieck fibration. For this, note that the lifting condition with respect to the map in (i) says that, for every object $a\in \cP$ and every morphism $f\colon c\to Pa$ in $\cC$, there is a marked $P$-lift $g\colon a'\to a$ of $f$. On the other hand, lifting with respect to the maps in (ii) and (iii) translates to the fact that every marked morphism in $(\cP,E)$ is $P$-cartesian. These two facts together imply that $P$ is a Grothendieck fibration. 

Next, we prove that the lifting condition with respect to the maps in (iv) and (v) expresses the fact that $P$ is cart-marked, for which it only remains to show that every $P$-cartesian morphism is in $E$. Let $g'\colon a''\to a$ be a $P$-cartesian morphism. By the above, there is a marked $P$-lift $g\colon a'\to a$ of $Pg'$. As both $g'$ and $g$ are $P$-cartesian lifts of the same morphism,  \cref{propPcart} (iii) gives an isomorphism $h\colon a''\xrightarrow{\cong} a'$ in $\cP$ such that $g'=g h$. Since $P$ lifts against the generating anodyne extension in (iv), we have that the isomorphism $h$~is marked, and then lifting against the functor in (v) ensures that the composite $g'$ of the marked morphisms $g$ and~$h$ is itself marked; hence $g'\in E$ as desired.

The fact that a cart-marked Grothendieck fibration is naive fibrant can be proved by a direct check of the lifting conditions, using that $P$-cartesian morphisms compose by \cref{propPcart} (i), and that isomorphisms are $P$-cartesian by \cref{propPcart} (ii).
\end{proof}

\begin{prop} \label{naivefibvsisofib}
     A marked functor $F\colon (\cP,E_\cP)\to (\cQ,E_\cQ)$ between cart-marked \linebreak Grothendieck fibrations $P\colon (\cP,E_\cP)\to \cC^\sharp$ and $Q\colon (\cQ,E_\cQ)\to \cC^\sharp$ is a naive fibration if and only if it is an isofibration. 
\end{prop}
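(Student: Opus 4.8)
The plan is to prove both implications by unpacking the lifting definitions against the generating marked anodyne extensions, using the fact that both source and target are cart-marked Grothendieck fibrations. First, suppose $F$ is a naive fibration, i.e., $F$ has the right lifting property against all maps in $\an^+$. To see that $F$ is an isofibration, I would feed $F$ an object $a\in\cP$ together with an isomorphism $g\colon b\xrightarrow{\cong} Fa$ in $\cQ$. Since $Q$ is cart-marked and $g$ is an isomorphism, \cref{propPcart}(ii) tells us $g$ is $Q$-cartesian, hence $g\in E_\cQ$. Now $g$ determines a marked functor $\bI^\sharp\to(\cQ,E_\cQ)$ over $\cC^\sharp$ picking out $g$, and the composite $[0]\xrightarrow{1}\bI^\flat\to\bI^\sharp$ — or rather the relevant one of the generating anodyne extensions (i) and (iv) — together with $a$ gives a lifting square whose solution is an isomorphism $f\colon a'\xrightarrow{\cong}a$ in $\cP$ with $Ff=g$; here one uses that $\bI^\flat\to\bI^\sharp$ is anodyne so the lift lands in $E_\cP$, and that, in $\cP$, a marked morphism over an isomorphism of $\cC$ is itself an isomorphism by the cart-marked hypothesis plus \cref{propPcart}(ii). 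Actually it is cleaner to observe directly: pick a $P$-cartesian lift of $Qg = \id$-type data... more precisely, since $P$ is a Grothendieck fibration we get a $P$-cartesian lift of $P(Ff)$, but I think the cleanest route is to construct the lifting square against $\bI^\flat\to\bI^\sharp\to\cC^\sharp$ directly, which by the cart-marked structure of $\cP$ forces the lift to be an isomorphism.

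Conversely, suppose $F$ is an isofibration between these two cart-marked Grothendieck fibrations. I would check the right lifting property against each of the five generating marked anodyne extensions (i)--(v) in turn. For (i): given a lifting problem, we need an object $a\in\cP$ and a morphism $f\colon c\to Pa$ in $\cC$ (the image data in $\cQ^\sharp = $ well, in $\cC^\sharp$), and we must produce a marked $P$-lift; this follows because $P=QF$ is a Grothendieck fibration so $f$ has a $P$-cartesian (hence $E_\cP$-marked) lift $g\colon a'\to a$, and then $Fg$ is a $Q$-cartesian lift of $f$ with target $Fa$, while the given datum in $\cQ$ is some other $Q$-cartesian lift of $f$ with target $Fa$; by \cref{propPcart}(iii) these differ by an isomorphism of $\cQ$ over an identity of $\cC$, which we lift through the isofibration $F$ and use to correct $g$ — here one uses \cref{propPcart}(ii)--(iii) in $\cP$ to know the corrected morphism is still $P$-cartesian and marked. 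For (ii), (iii): these encode that marked morphisms in $\cP$ are $P$-cartesian and that "$2$-out-of-$3$"-type factorization properties hold; since both $(\cP,E_\cP)$ and $(\cQ,E_\cQ)$ are already cart-marked (so marked $=$ cartesian on both sides), a lift exists by solving the problem downstairs in $\cC$ and in $\cQ$ and then using that $F$ is fully faithful... no — $F$ need not be fully faithful; instead one uses that the relevant lift in $\cQ$ already exists uniquely (target is cart-marked), and pulls it back through $F$ using the Grothendieck fibration property of $P$ and the uniqueness clause in the definition of $P$-cartesian. For (iv): given a marked morphism in $\cQ$ whose underlying morphism is an isomorphism of $\cC$, lifting (iv) through $F$ amounts to: such a morphism in $\cP$ is automatically an isomorphism (cart-marked $+$ \cref{propPcart}(ii)), hence already "marked as an isomorphism", so the lift against $\bI^\flat\to\bI^\sharp$ is trivially available — wait, this generating map concerns whether $F$ sends isomorphisms-over-isos to marked things; since $E_\cP$ contains all such by cart-markedness, the lift exists. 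For (v): similar, using that $P$-cartesian morphisms compose by \cref{propPcart}(i).

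The main obstacle, and the step I expect to require the most care, is implication (i) of the lifting check in the "isofibration $\Rightarrow$ naive fibration" direction: producing the $P$-cartesian lift in $\cP$ that is \emph{compatible} with the already-chosen $Q$-cartesian lift in $\cQ$ sitting in the lifting square. One freely gets \emph{a} $P$-cartesian lift from the Grothendieck fibration property of $P$, but it maps under $F$ to \emph{some} $Q$-cartesian lift, not necessarily the prescribed one; the reconciliation uses \cref{propPcart}(iii) to get a comparison isomorphism in $\cQ$ over an identity, then lifts that isomorphism through the isofibration $F$ (this is exactly where the isofibration hypothesis is used), and finally uses \cref{propPcart}(ii)--(iii) again to check the adjusted lift is still $P$-cartesian and still strictly over the prescribed morphism. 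The remaining cases (ii)--(v) are comparatively routine bookkeeping with the uniqueness clauses in the definition of cartesian morphism and the closure properties in \cref{propPcart}; I would dispatch them quickly. For the forward direction (naive fibration $\Rightarrow$ isofibration), the only subtlety is recognizing that an $E_\cP$-marked morphism of $\cP$ lying over an isomorphism of $\cC$ is an isomorphism, which is immediate from cart-markedness and \cref{propPcart}(ii), so that the lift extracted from the square against (i)/(iv) is genuinely an isomorphism in $\cP$.
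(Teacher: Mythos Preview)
Your proposal is correct and follows essentially the same approach as the paper. For the forward direction the paper lifts directly against generator (i) (not (iv) or any map involving $\bI$) to obtain a marked, hence $P$-cartesian, lift and then invokes \cref{propPcart}(ii) to see it is an isomorphism; for the converse it singles out case (i) as the only nontrivial lifting problem and resolves it exactly as you describe---take a $P$-cartesian lift, compare its image with the prescribed $Q$-cartesian morphism via \cref{propPcart}(iii), lift the resulting isomorphism through the isofibration $F$, and compose---while dismissing (ii)--(v) as straightforward.
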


\begin{proof}
Let $F$ be a naive fibration, and consider an object $a\in \cP$ and an isomorphism $h\colon b\xrightarrow{\cong} Fa$ in $\cQ$. By \cref{propPcart} (ii), we have that $h$ is $Q$-cartesian. As $F$ lifts against $[0]\xrightarrow{1} [1]^\sharp\xrightarrow{Qh} \cC^\sharp$, we get a $P$-cartesian lift $g\colon a'\to a$ of $Qh$ such that $Fg=h$. Moreover, \cref{propPcart} (ii) ensures that $g$ is an isomorphism, and thus $F$ is an isofibration.

Now suppose that $F$ is an isofibration. It is straightforward to check that $F$ lifts against the marked anodyne extensions in (ii)-(v), as $P$ and $Q$ are cart-marked Grothendieck fibrations. It remains to show that there is a lift in the following diagram over $\cC^\sharp$.
\begin{tz}
\node[](1) {$[0]$}; 
\node[right of=1,xshift=.6cm](2) {$(\cP,E_\cP)$}; 
\node[below of=1](1') {$[1]^\sharp$};
\node[below of=2](2') {$(\cQ,E_\cQ)$}; 

\draw[->] (1) to node[above,la]{$a$} (2); 
\draw[->] (1) to (1'); 
\draw[->] (2) to node[right,la]{$F$} (2');
\draw[->] (1') to node[below,la]{$f$} (2');
\draw[->,dashed] (1') to (2);
\end{tz}

Since $P\colon \cP\to \cC$ is a Grothendieck fibration, there is a $P$-cartesian lift $g\colon a'\to a$ of~$Qf$. Then both $Fg$ and $f$ are $Q$-cartesian lifts of $Qf$, as $F$ is a marked functor and as such sends $P$-cartesian morphisms to $Q$-cartesian ones. \cref{propPcart} (iii) then gives an isomorphism $h\colon b\xrightarrow{\cong} Fa'$ in $\cQ$ such that $f=(Fg)h$, which the isofibration $F$ lifts to an isomorphism $g'\colon a''\xrightarrow{\cong} a'$ in $\cP$. Then the composite $gg'\colon a''\to a$ of $P$-cartesian morphisms gives the desired lift.
\end{proof}

We now introduce the class of cofibrations.

\begin{defn}
A marked functor $I\colon (\cA,E_\cA)\to (\cB,E_\cB)$ over $\cC^\sharp$ is a \textbf{marked cofibration} if its underlying functor $I\colon \cA\to \cB$ is injective on objects. A marked functor is a \textbf{marked trivial fibration} if it has the right lifting property with respect to all marked cofibrations.
\end{defn}

In order to apply \cite[Theorem 2.8]{GMSV}, we must ensure that the class of marked cofibrations is cofibrantly generated by a set. We obtain this result upon inspection of the trivial fibrations.

\begin{prop} \label{charmtrivfib}
A marked functor $F\colon (\cP,E_\cP)\to (\cQ,E_\cQ)$ over $\cC^\sharp$ is a marked trivial fibration if and only if the following conditions hold: 
\begin{rome}[leftmargin=1.1cm]
\item its underlying functor $F\colon \cP\to \cQ$ is a trivial fibration in the canonical model structure on~$\Cat$,
\item a morphism $f$ in $\cP$ is marked if and only if $Ff$ in $\cQ$ is marked.
\end{rome} 
\end{prop}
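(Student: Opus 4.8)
The plan is to characterize marked trivial fibrations by a direct analysis of the lifting problems against a generating set of marked cofibrations, mirroring the classical unpacking of \cref{chartrivfib} but now keeping track of the marking. First I would fix a convenient generating set for the marked cofibrations: the marked functors over $\cC^\sharp$ of the form
\[ \emptyset\to [0]\to \cC^\sharp,\quad [0]\sqcup [0]\to [0]\to \cC^\sharp,\quad [0]\sqcup[0]\to [1]^\flat\to\cC^\sharp,\quad [1]^\flat\sqcup_{[0]\sqcup[0]}[1]^\flat\to [1]^\flat\to\cC^\sharp, \]
together with the ``marking-creating'' generator $[1]^\flat\to [1]^\sharp\to\cC^\sharp$. (Any lift of these target maps along a given $P\colon(\cP,E_\cP)\to\cC^\sharp$ supplies the relevant finite marked data; the first four are the evident analogues of the generators used in the proof of \cref{thm:discreteMS}, now with minimally-marked $1$-cells, and the last one detects that a $1$-cell whose image is marked is itself marked.) I would note that this set generates the marked cofibrations as a weakly saturated class, since injectivity on objects of the underlying functor is detected by the first two generators and the remaining ones impose no condition on objects.

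The argument then splits into the two implications. For the ``if'' direction, I would assume (i) and (ii) and solve an arbitrary lifting problem of a marked cofibration $I\colon(\cA,E_\cA)\hookrightarrow(\cB,E_\cB)$ against $F$: on underlying categories a lift exists and is determined on morphisms because $F$ is a trivial fibration in $\Cat$, hence surjective on objects and fully faithful by \cref{chartrivfib}; it remains to check the chosen lift $G\colon\cB\to\cP$ is a \emph{marked} functor, i.e.\ sends $E_\cB$ into $E_\cP$. This is exactly where (ii) enters: for $f\in E_\cB$ we have $F(Gf)=If$ marked in $\cQ$ (as $I$ is marked), so by the ``only if'' half of (ii), $Gf\in E_\cP$. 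For the ``only if'' direction, I would assume $F$ is a marked trivial fibration and extract (i) and (ii) by testing against the generators: lifting against the first four generators shows precisely that $F$ is surjective on objects and fully faithful, hence (i) by \cref{chartrivfib}; lifting against $[1]^\flat\to[1]^\sharp\to\cC^\sharp$ applied to a morphism $f$ of $\cP$ with $Ff$ marked produces a marked morphism of $\cP$ lying over $Ff$, and by fullness+faithfulness this must be $f$ itself, giving the nontrivial direction of (ii); the easy direction of (ii) is just that $F$ is a marked functor.

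The one genuinely delicate point — and the step I expect to be the main obstacle — is the verification in the ``only if'' direction that a marked morphism over $Ff$ is forced to equal $f$. Surjectivity on objects gives us a morphism of $\cP$ with domain and codomain equal to those of $f$ and image $Ff$, but a priori only \emph{some} such morphism is marked; faithfulness of the underlying $F$ (from \cref{chartrivfib}) is what pins it down to be $f$. I would be careful to phrase the lifting problem so that the domain and codomain objects are pre-chosen to be those of $f$ (using that $F$ is injective on morphisms over a fixed pair of objects), so that the lift cannot wander to a different morphism. Everything else is a routine unpacking of finite lifting diagrams, closely parallel to the discrete case, and I would keep the exposition brief accordingly.
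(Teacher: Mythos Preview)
Your approach is essentially the paper's: for the ``only if'' direction test $F$ against a small list of marked cofibrations, and for the ``if'' direction solve an arbitrary lifting problem by first lifting on underlying categories via (i) and then checking the lift is marked via (ii). The ``if'' direction is verbatim the paper's argument. Two points deserve correction, though.

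First, a genuine error in your list: the map $[0]\sqcup[0]\to[0]$ is \emph{not} a marked cofibration, since it is not injective on objects. You have carried it over from the generators appearing in the proof of the discrete model structure, but there the cofibrations are \emph{all} functors, whereas here they are only the injective-on-objects ones; so a marked trivial fibration has no reason to lift against this map. Fortunately you do not need it: surjectivity on objects, fullness and faithfulness are already extracted from $\emptyset\to[0]$, $[0]\sqcup[0]\to[1]^\flat$ and $[1]^\flat\sqcup_{[0]\sqcup[0]}[1]^\flat\to[1]^\flat$, which is exactly condition~(i). Simply delete $[0]\sqcup[0]\to[0]$. Relatedly, your claim that the list ``generates the marked cofibrations as a weakly saturated class'' is both unjustified and unnecessary: for the ``only if'' direction you only need the listed maps to \emph{be} marked cofibrations, so that a marked trivial fibration lifts against them.

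Second, your ``genuinely delicate point'' is not delicate at all, because your choice of $[1]^\flat\to[1]^\sharp$ is the right one. In that lifting problem the top horizontal map already records $f$, and since $[1]^\flat\to[1]^\sharp$ is the identity on underlying categories, commutativity of the \emph{upper} triangle forces any lift to equal $f$ as a functor; the lift being a marked functor out of $[1]^\sharp$ then says precisely that $f$ is marked. No appeal to faithfulness is needed. (The paper instead tests against $\emptyset\to[1]^\sharp$; with that choice a lift is only \emph{some} marked morphism over $Ff$, and one does need a further argument of the kind you sketch. Your generator is cleaner here.)
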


\begin{proof}
If $F$ is a marked trivial fibration, conditions (i) and (ii) can be extracted from its lifting conditions with respect to the marked cofibrations over $\cC^\sharp$
\[ \cA^\flat\to \cB^\flat \quad \text{and} \quad \emptyset\to [1]^\sharp, \]
where $\cA\to \cB$ ranges over the set $\{\emptyset \to [0], \,[0]\sqcup[0]\to [1], \,[1]\sqcup_{[0]\sqcup[0]} [1]\to [1]\}$ of generating cofibrations in the canonical model structure on~$\Cat$.

Conversely, suppose that $F$ satisfies conditions (i) and (ii), and consider a marked cofibration $I\colon (\cA,E_\cA)\to (\cB,E_\cB)$. We show that there is a lift in the following diagram. 
\begin{tz}
\node[](1) {$(\cA,E_\cA)$}; 
\node[right of=1,xshift=1cm](2) {$(\cP,E_\cP)$}; 
\node[below of=1](1') {$(\cB,E_\cB)$};
\node[below of=2](2') {$(\cQ,E_\cQ)$}; 

\draw[->] (1) to (2); 
\draw[->] (1) to node[left,la]{$I$} (1'); 
\draw[->] (2) to node[right,la]{$F$} (2');
\draw[->] (1') to node[below,la]{$G$} (2');
\draw[->,dashed] (1') to (2);
\end{tz}
At the level of underlying categories, such a lift $L\colon \cB\to \cP$ exists as $I$ is a cofibration and $F$ a trivial fibration in the canonical model structure on~$\Cat$ by condition (i). It remains to show that $L$ preserves the marking. For this, let $g$ be a marked morphism in $\cB$; then $Gg$ is marked in $\cQ$. As $FLg=Gg$ is marked, so is $Lg$ by condition (ii), from which we conclude that $L$ is a marked functor. 
\end{proof}

\begin{cor}
    The class of marked cofibrations is cofibrantly generated by a set.
\end{cor}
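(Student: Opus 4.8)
The plan is to read off an explicit generating set directly from the proof of \cref{charmtrivfib}. That proof shows that a marked functor over $\cC^\sharp$ is a marked trivial fibration precisely when it has the right lifting property against the two families of marked cofibrations appearing there, so these families form a generating set — once we check they are indeed a \emph{set} of marked cofibrations over $\cC^\sharp$.

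Concretely, I would argue as follows. First, recall from \cref{thm:MSCat} that the canonical model structure on $\Cat$ is combinatorial, hence its cofibrations are cofibrantly generated by a set $\cI$; the proof of \cref{charmtrivfib} uses the explicit choice $\cI=\{\emptyset\to[0],\ [0]\sqcup[0]\to[1],\ [1]\sqcup_{[0]\sqcup[0]}[1]\to[1]\}$. Applying $(-)^\flat\colon \Cat\to\Cat^+$ to each map in $\cI$ and postcomposing with the structure map to $\cC^\sharp$ (using that any functor to $\cC$ is canonically a marked functor to $\cC^\sharp$, and that all these small categories admit functors to $\cC$ — here one must be slightly careful: the objects must be sent somewhere in $\cC$, and for $[0],[1]$ this amounts to choosing an object resp.\ a morphism of $\cC$) yields a \emph{set} of marked cofibrations indexed by objects/morphisms of $\cC$. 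Adjoining the single map $\emptyset\to[1]^\sharp$ over each morphism of $\cC$ gives a set $\cJ^+$ of marked cofibrations over $\cC^\sharp$. By \cref{charmtrivfib}, a marked functor over $\cC^\sharp$ has the right lifting property against all of $\cJ^+$ if and only if it is a marked trivial fibration, i.e.\ if and only if it has the right lifting property against all marked cofibrations. By the retract argument (every weakly saturated class containing a set is the class of maps with the left lifting property against the maps with the right lifting property against that set, provided the domains are small — which holds here since all categories involved are small and $\Cat^+$ is locally presentable), the class of marked cofibrations is exactly the weakly saturated closure of $\cJ^+$. Hence it is cofibrantly generated by the set $\cJ^+$.

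The one genuine subtlety — and the step I would be most careful about — is the bookkeeping of the "over $\cC^\sharp$" condition: a generating cofibration in $\Cat^+_{/\cC^\sharp}$ must come equipped with a structure map to $\cC^\sharp$, so the naive generating set from $\Cat$ must be expanded into a set indexed by the relevant objects and morphisms of $\cC$ (as was already done for $(\Cat_{/\cC})_{\mathrm{discfib}}$ in the proof of \cref{thm:discreteMS}). This is exactly parallel to the unmarked case, so no new difficulty arises, but it is what makes the assertion "by a set" literally true rather than merely "by a class"; everything else is a direct invocation of \cref{charmtrivfib} together with the small object argument in the locally presentable category $\Cat^+$.
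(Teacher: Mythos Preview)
Your proposal is correct and follows essentially the same approach as the paper: the paper leaves this corollary without proof, relying on the fact that the generating set $\cJ^+$ can be read off directly from the proof of \cref{charmtrivfib} (the maps $\cA^\flat\to\cB^\flat$ and $\emptyset\to[1]^\sharp$, indexed over the appropriate objects and morphisms of $\cC$), together with the standard small-object/retract argument you describe. Your careful attention to the indexing over $\cC$ and the explicit invocation of the retract argument simply spell out what the paper takes for granted.
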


\cref{charmtrivfib} also yields the following characterization of marked trivial fibrations between fibrant objects.

\begin{cor} \label{prop:trivfibbtwGF}
Let $F\colon (\cP,E_\cP)\to (\cQ,E_\cQ)$ be a marked functor between cart-marked Grothendieck fibrations over $\cC^\sharp$. The following are equivalent:
\begin{rome}[leftmargin=1.1cm]
    \item the marked functor $F$ is a marked trivial fibration, 
    \item the underlying functor $F$ is a trivial fibration in the canonical model structure on~$\Cat$,
    \item the underlying functor $F$ is an equivalence of categories and an isofibration.
\end{rome}
\end{cor}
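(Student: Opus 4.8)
The plan is to prove the chain of equivalences (i) $\Leftrightarrow$ (ii) $\Leftrightarrow$ (iii) by combining \cref{charmtrivfib} with standard facts about the canonical model structure on $\Cat$. The implication (i) $\Rightarrow$ (ii) is immediate from \cref{charmtrivfib}: a marked trivial fibration has underlying functor a trivial fibration in the canonical model structure, by condition (i) of that proposition. For (ii) $\Leftrightarrow$ (iii), I would simply invoke the general fact that in any model category the trivial fibrations are exactly the maps that are both fibrations and weak equivalences; here, by \cref{thm:MSCat}, the fibrations are the isofibrations and the weak equivalences are the equivalences of categories, giving precisely the description in (iii). (Alternatively, one can cite \cref{chartrivfib}: surjective-on-objects and fully faithful is equivalent to being an equivalence of categories and an isofibration.)

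The only implication requiring actual work is (ii) $\Rightarrow$ (i), and this is where the hypothesis that $\cP$ and $\cQ$ are cart-marked Grothendieck fibrations gets used. By \cref{charmtrivfib}, it suffices to show that if the underlying functor $F\colon\cP\to\cQ$ is a trivial fibration in $\Cat$, then a morphism $f$ in $\cP$ is marked if and only if $Ff$ is marked in $\cQ$. Since $\cP$ is cart-marked, $f$ is marked iff $f$ is $P$-cartesian; since $\cQ$ is cart-marked, $Ff$ is marked iff $Ff$ is $Q$-cartesian. So the claim reduces to: $f$ is $P$-cartesian $\iff$ $Ff$ is $Q$-cartesian. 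But this is exactly the content of \cref{trivfibandcartmor}, applied to the trivial fibration $F$ in $\Cat_{/\cC}$ between $P\colon\cP\to\cC$ and $Q\colon\cQ\to\cC$ (note $QF=P$ since $F$ is a map over $\cC^\sharp$, and a trivial fibration of underlying categories over $\cC$ is a trivial fibration in $\Cat_{/\cC}$). Hence condition (ii) of \cref{charmtrivfib} holds, and together with the hypothesis (which is condition (i) of \cref{charmtrivfib}) we conclude $F$ is a marked trivial fibration.

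I do not anticipate a serious obstacle: the proposition is essentially a bookkeeping exercise that chains together \cref{charmtrivfib}, \cref{trivfibandcartmor}, and the characterization of trivial fibrations in a model category. The one point to be careful about is the compatibility of ``trivial fibration in $\Cat_{/\cC}$'' with ``trivial fibration of underlying categories'': since the slice model structure on $\Cat_{/\cC}$ (and hence its localizations) has weak equivalences and fibrations created by the forgetful functor to $\Cat$, a morphism over $\cC$ is a trivial fibration in $\Cat_{/\cC}$ precisely when its underlying functor is one in $\Cat$, so \cref{trivfibandcartmor} does apply verbatim. Everything else is a direct translation through the cart-marked condition.
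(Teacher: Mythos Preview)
Your proposal is correct and matches the paper's proof essentially verbatim: the paper also reduces (i)$\Leftrightarrow$(ii) to verifying condition (ii) of \cref{charmtrivfib} via \cref{trivfibandcartmor} (using the cart-marked hypothesis to translate between ``marked'' and ``cartesian''), and deduces (ii)$\Leftrightarrow$(iii) directly from the description of the canonical model structure on $\Cat$. Your extra paragraph about the compatibility of slice versus underlying trivial fibrations is a fair point of care, but not something the paper dwells on.
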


\begin{proof}
To see that (i) and (ii) are equivalent, it suffices to check that every trivial fibration~$F$ as above satisfies condition (ii) of \cref{charmtrivfib}, and this follows from \cref{trivfibandcartmor}. The fact that (ii) and (iii) are equivalent is immediate from the description of the canonical model structure on~$\Cat$.
\end{proof}

We now study the behavior of trivial fibrations in relation to cofibrations and naive fibrant objects.

\begin{prop} \label{pushofmarkedtrivfib}
The pushout of a marked trivial fibration in $\Cat^+$ along a marked cofibration is a marked trivial fibration. 
\end{prop}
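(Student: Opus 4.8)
The plan is to reduce the statement to a lifting-property argument, using the characterization of marked trivial fibrations from \cref{charmtrivfib}. Since marked trivial fibrations are exactly the maps with the right lifting property against all marked cofibrations, and the class of marked cofibrations is weakly saturated (in particular stable under pushout and transfinite composition), it suffices to establish the dual stability: the pushout of a marked trivial fibration along a marked cofibration is again a marked trivial fibration. However, one should be careful --- pushouts in a model category do not in general preserve fibrations, so we genuinely need to exploit the explicit description of these classes in $\Cat^+$ rather than a formal argument. So instead I would argue directly via \cref{charmtrivfib}: given a marked trivial fibration $F\colon (\cP,E_\cP)\to(\cQ,E_\cQ)$ and a marked cofibration $(\cQ,E_\cQ)\to(\cR,E_\cR)$, form the pushout $(\cS,E_\cS)$, and check that the induced map $(\cR,E_\cR)\to(\cS,E_\cS)$ satisfies conditions (i) and (ii) of \cref{charmtrivfib}.

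First I would analyze the underlying pushout square of \emph{categories}. Here the key input is that $F\colon\cP\to\cQ$, being a trivial fibration in the canonical model structure, is surjective on objects and fully faithful (\cref{chartrivfib}); combined with the fact that $\cQ\to\cR$ is injective on objects, one computes the pushout $\cS$ explicitly: its objects are $\mathrm{Ob}\,\cR$ together with the objects of $\cP$ not hit from $\cQ$ --- but since $F$ is surjective on objects, $\cS$ has the same objects as $\cR$, up to the identification coming from $\cP$, and one checks the comparison functor $\cR\to\cS$ is injective on objects and that $\cP\to\cS$ remains surjective on objects and fully faithful. The cleanest route is to observe that $F$ being a trivial fibration in $\Cat$ means it is an equivalence of categories admitting a section up to isomorphism; more carefully, since fully faithful surjective-on-objects functors are stable under pushout along injective-on-objects functors in $\Cat$ (a direct verification on hom-sets, using that $\cQ\to\cR$ is injective on objects so no new identifications of morphisms with distinct endpoints are forced), the pushout map $\cR\to\cS$ is again surjective on objects and fully faithful, hence a trivial fibration in $\Cat$. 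This gives condition (i).

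Next I would handle the markings, i.e.\ condition (ii): a morphism $f$ in $\cR$ is in $E_\cR$ if and only if its image in $\cS$ is in $E_\cS$. By construction of pushouts in $\Cat^+$, the marking $E_\cS$ is generated by the images of $E_\cR$ and $E_\cP$; since $F$ reflects markings (condition (ii) of \cref{charmtrivfib} applied to $F$), every marked morphism of $\cP$ maps to an already-marked morphism of $\cR$ via the pushout identification (using that a marked morphism of $\cP$ is $F$ applied to which is marked in $\cQ$, hence marked in $\cR$). Thus $E_\cS$ is just the image of $E_\cR$, and because $\cR\to\cS$ is fully faithful --- so in particular injective on morphisms --- no extra morphisms of $\cR$ get marked. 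This yields condition (ii), and hence by \cref{charmtrivfib} the map $(\cR,E_\cR)\to(\cS,E_\cS)$ is a marked trivial fibration.

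The main obstacle I anticipate is the explicit pushout computation in $\Cat$ and the verification that ``surjective-on-objects and fully faithful'' is stable under pushout along an injective-on-objects functor: pushouts of categories can be badly behaved (free-generation of new composites), and one must use injectivity-on-objects of the cofibration in an essential way to see that the hom-sets of $\cS$ are not enlarged beyond what fully faithfulness of $\cP\to\cS$ demands. Once that lemma is in place, both the $\Cat$-level statement and the marking-level statement follow, and the marking bookkeeping is then routine given that $F$ both preserves and reflects markings.
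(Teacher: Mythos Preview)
Your overall strategy matches the paper's: verify conditions (i) and (ii) of \cref{charmtrivfib} for the pushout map. However, your setup of the pushout square is garbled. If $F\colon(\cP,E_\cP)\to(\cQ,E_\cQ)$ is the trivial fibration, then the cofibration must share the \emph{source} $(\cP,E_\cP)$ with $F$ for a pushout span to make sense --- so it should be $J\colon(\cP,E_\cP)\to(\cR,E_\cR)$, not $(\cQ,E_\cQ)\to(\cR,E_\cR)$. Several of your later claims only parse under this misreading, and some are false under the correct one: for instance $\cR\to\cS$ is typically \emph{not} injective on objects (since $F$ may identify objects of $\cP$, forcing identifications in $\cS$), and there is no map $\cQ\to\cR$ through which to route your marking argument.

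Once the span is set up correctly as $\cQ\xleftarrow{F}\cP\xrightarrow{J}\cR$, your proposed route for condition (i) --- a direct verification that ``surjective-on-objects and fully faithful'' is stable under pushout along injective-on-objects functors --- is a genuine lemma requiring control over pushouts in $\Cat$, exactly the obstacle you flag. The paper sidesteps this entirely: it observes that trivial fibrations in the canonical model structure on $\Cat$ are precisely the surjective-on-objects equivalences, then uses that this model structure is left proper (all objects being cofibrant), so equivalences are stable under pushout along cofibrations, while surjectivity on objects is preserved under any pushout since $\mathrm{Ob}\colon\Cat\to\Set$ is a left adjoint. This is a one-line argument that avoids any explicit pushout computation in $\Cat$. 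For condition (ii) the paper simply notes that the set of marked morphisms of the pushout is the pushout of the sets of marked morphisms; with the diagram corrected, your marking argument would amount to the same thing.
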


\begin{proof} 
We first want to verify condition (i) in \cref{charmtrivfib}. For this, we show that the pushout of a trivial fibration along a cofibration in the canonical model structure on~$\Cat$ is a trivial fibration. Indeed, trivial fibrations are the equivalences of categories which are surjective on objects; then, as the canonical model structure is left proper (as all objects in $\Cat$ are cofibrant), equivalences are stable under pushout along cofibrations, and surjective-on-objects functors are always stable under pushout. 

It then remains to show that such a pushout satisfies condition (ii) in \cref{charmtrivfib}, which follows from the fact that the set of marked morphisms of the pushout is the pushout of the sets of marked morphisms.
\end{proof}

\begin{prop} \label{trivfibfromGfib}
Let $F\colon (\cP,E_\cP)\to (\cQ,E_\cQ)$ be a marked trivial fibration over $\cC^\sharp$, where $P\colon (\cP,E_\cP)\to \cC^\sharp$ is a cart-marked Grothendieck fibration. Then $Q\colon (\cQ,E_\cQ)\to \cC^\sharp$ is also a cart-marked Grothendieck fibration. 
\end{prop}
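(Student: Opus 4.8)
The plan is to verify that $Q\colon\cQ\to\cC$ is a Grothendieck fibration and that its marking $E_\cQ$ is exactly the set of $Q$-cartesian morphisms, transporting everything along the marked trivial fibration $F$. By \cref{charmtrivfib}, the underlying functor $F\colon\cP\to\cQ$ is a trivial fibration in the canonical model structure on $\Cat$, so by \cref{chartrivfib} it is surjective on objects and fully faithful; moreover, a morphism $f$ of $\cP$ is marked if and only if $Ff$ is marked, and by \cref{trivfibandcartmor} a morphism $g$ of $\cP$ is $P$-cartesian if and only if $Fg$ is $Q$-cartesian. Since $P=QF$, these are the only tools needed.

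First I would show $Q$ is a Grothendieck fibration. Take an object $b\in\cQ$ and a morphism $f\colon c\to Qb$ in $\cC$. Using surjectivity on objects of $F$, pick $a\in\cP$ with $Fa=b$; then $f\colon c\to Qb = QFa = Pa$ is a morphism in $\cC$, so since $P$ is a Grothendieck fibration there is a $P$-cartesian lift $g\colon a'\to a$ in $\cP$ with $Pg=f$. Then $Fg\colon Fa'\to b$ satisfies $Q(Fg)=Pg=f$, and $Fg$ is $Q$-cartesian by \cref{trivfibandcartmor}. Hence $Fg$ is a $Q$-cartesian lift of $f$, so $Q$ is a Grothendieck fibration.

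Next I would check that $E_\cQ$ is precisely the set of $Q$-cartesian morphisms. For the inclusion $E_\cQ\subseteq\{Q\text{-cartesian}\}$: let $h$ be marked in $\cQ$. By full faithfulness and surjectivity on objects of $F$, write $h = Fg$ for some morphism $g$ in $\cP$ (lift the domain and codomain, then use fullness); since $Fg=h$ is marked, $g$ is marked in $\cP$ by \cref{charmtrivfib}(ii), hence $g$ is $P$-cartesian since $P$ is cart-marked, hence $h=Fg$ is $Q$-cartesian by \cref{trivfibandcartmor}. For the reverse inclusion: let $h$ be a $Q$-cartesian morphism of $\cQ$; lift it as $h=Fg$ as above, so $g$ is $P$-cartesian by \cref{trivfibandcartmor}, hence marked in $\cP$ since $P$ is cart-marked, hence $h=Fg$ is marked in $\cQ$ by \cref{charmtrivfib}(ii). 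Therefore $Q$ is a cart-marked Grothendieck fibration.

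I do not anticipate a serious obstacle here; the only point requiring minor care is that when lifting a morphism $h\colon b'\to b$ of $\cQ$ through $F$, one must first choose preimages of the two objects (using surjectivity on objects) and then invoke fullness of $F$ to obtain a morphism $g$ with $Fg=h$ — faithfulness is not needed for this step but guarantees such $g$ interacts well with cartesianness via \cref{trivfibandcartmor}. All verifications reduce to the already-established \cref{charmtrivfib}, \cref{chartrivfib}, and \cref{trivfibandcartmor}.
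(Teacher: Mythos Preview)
Your proposal is correct and follows essentially the same approach as the paper: both lift objects and morphisms along the surjective-on-objects, fully faithful $F$, then use \cref{charmtrivfib}(ii) and \cref{trivfibandcartmor} to transport the cartesian and marked conditions back and forth. The only cosmetic difference is that the paper handles the cart-marking verification as a single chain of ``if and only if'' statements rather than two separate inclusions.
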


\begin{proof}
We first show that $Q\colon \cQ\to \cC$ is a Grothendieck fibration. Let $b\in \cB$ and $f\colon c\to Qb$ be a morphism in $\cC$; we wish to find a $Q$-cartesian lift of $f$. Since $F$ is surjective on objects, there is an object $a\in \cP$ such that $Fa=b$. Using the fact that $P\colon \cP\to \cC$ is a Grothendieck fibration, we obtain a $P$-cartesian lift $g\colon a'\to a$ of $f\colon c\to Qb=PFa$. Then $Fg\colon Fa'\to Fa=b$ is a $Q$-lift of $f$, which is moreover $Q$-cartesian by \cref{trivfibandcartmor}.  

To see that $Q\colon (\cQ,E_\cQ)\to \cC^\sharp$ is cart-marked, we first note that for any morphism $h$ in~$\cQ$ there is an $F$-lift $g$ of $h$, as $F$ is surjective on objects and fully faithful on morphisms. By \cref{charmtrivfib} (ii), the morphism $h=Fg$ is in $E_\cQ$ if and only if $g$ is in $E_\cP$, which is equivalent to $g$ being $P$-cartesian as $P$ is cart-marked. In turn, this happens if and only if $h=Fg$ is $Q$-cartesian by \cref{trivfibandcartmor}.
\end{proof}

We let the class of weak equivalences between cart-marked Grothendieck fibrations be those marked functors whose underlying functors are equivalences of categories. Just as in \cref{defn:we}, this allows us to define the class of weak equivalences between arbitrary objects. 

\begin{defn}
    A marked functor $F\colon(\cP,E_\cP)\to (\cQ,E_\cQ)$ over $\cC^\sharp$ is a \textbf{marked weak equivalence} if there exists a marked naive fibrant replacement of $F$ whose underlying functor is an equivalence of categories. 
\end{defn}

We can now relate the notions of marked trivial fibrations and weak equivalences.

\begin{prop}\label{mtrivfibarewe}
Every marked trivial fibration is a weak equivalence. 
\end{prop}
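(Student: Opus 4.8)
The plan is to reduce the statement to something about the underlying categorical data by exhibiting an explicit marked naive fibrant replacement of a given marked trivial fibration $F\colon (\cP,E_\cP)\to(\cQ,E_\cQ)$ over $\cC^\sharp$, and then checking that the induced map between the replacements is an equivalence of categories. Since the class of marked anodyne extensions is weakly saturated, I can factor the structure map $P\colon(\cP,E_\cP)\to\cC^\sharp$ as a marked anodyne extension $\iota_\cP\colon(\cP,E_\cP)\to(\widehat\cP,E_{\widehat\cP})$ followed by a marked naive fibration to $\cC^\sharp$; by the preceding proposition, $(\widehat\cP,E_{\widehat\cP})\to\cC^\sharp$ is a cart-marked Grothendieck fibration. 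This gives a naive fibrant replacement of $P$, and the goal is to produce a compatible naive fibrant replacement of $Q$ together with a map $\widehat F\colon \widehat\cP\to\widehat\cQ$ over $\cC^\sharp$ that is an equivalence of categories.

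The key idea is to build $\widehat\cQ$ by a pushout: form the pushout of $\iota_\cP\colon(\cP,E_\cP)\to(\widehat\cP,E_{\widehat\cP})$ along $F\colon(\cP,E_\cP)\to(\cQ,E_\cQ)$, call it $(\widehat\cQ,E_{\widehat\cQ})$, equipped with the canonical map $\iota_\cQ\colon(\cQ,E_\cQ)\to(\widehat\cQ,E_{\widehat\cQ})$ and the induced map $\widehat F\colon (\widehat\cP,E_{\widehat\cP})\to(\widehat\cQ,E_{\widehat\cQ})$. Since marked anodyne extensions are closed under pushout (weak saturation), $\iota_\cQ$ is again a marked anodyne extension, so $(\widehat\cQ,E_{\widehat\cQ})\to\cC^\sharp$ is naive fibrant, i.e. a cart-marked Grothendieck fibration, and $\iota_\cQ$ is a naive fibrant replacement of $Q$. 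By construction the square
\begin{tz}
\node[](1) {$(\cP,E_\cP)$};
\node[right of=1,xshift=1cm](2) {$(\cQ,E_\cQ)$};
\node[below of=1](1') {$(\widehat\cP,E_{\widehat\cP})$};
\node[below of=2](2') {$(\widehat\cQ,E_{\widehat\cQ})$};
\draw[->] (1) to node[above,la]{$F$} (2);
\draw[->] (1) to node[left,la]{$\iota_\cP$} (1');
\draw[->] (2) to node[right,la]{$\iota_\cQ$} (2');
\draw[->] (1') to node[below,la]{$\widehat F$} (2');
\end{tz}
commutes, so $\widehat F$ (together with $\iota_\cP,\iota_\cQ$) is a marked naive fibrant replacement of $F$; it remains only to show that the underlying functor of $\widehat F$ is an equivalence of categories.

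For that last point, I would use \cref{pushofmarkedtrivfib}: $\widehat F$ is a pushout of the marked trivial fibration $F$ along the marked cofibration $\iota_\cP$ — here I need that $\iota_\cP$ is a marked cofibration, which holds because marked anodyne extensions are built from generating maps that are injective on objects and injective-on-objects functors are closed under the weakly saturated operations, hence every marked anodyne extension is a marked cofibration. Therefore $\widehat F$ is itself a marked trivial fibration, and by \cref{prop:trivfibbtwGF} (applied to the two cart-marked Grothendieck fibrations $\widehat\cP,\widehat\cQ$ over $\cC^\sharp$) its underlying functor is in particular an equivalence of categories. This exhibits a marked naive fibrant replacement of $F$ whose underlying functor is an equivalence of categories, which is exactly the definition of a marked weak equivalence.

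The main obstacle I anticipate is the bookkeeping around the pushout in $\Cat^+$: I must check that the pushout of marked categories has underlying category the pushout of the underlying categories (so that closure of injective-on-objects functors under pushout applies) and that its set of marked morphisms is the expected pushout of the marking sets — the latter is precisely the fact invoked in the proof of \cref{pushofmarkedtrivfib}, so it is available, but care is needed to confirm the hypotheses of that proposition genuinely apply to $\widehat F$ rather than to a map merely resembling it. Everything else (weak saturation giving that $\iota_\cP$, $\iota_\cQ$ are both marked anodyne and marked cofibrations, and the identification of naive fibrant objects with cart-marked Grothendieck fibrations) is already in place.
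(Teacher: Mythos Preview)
Your overall strategy is the same as the paper's: take a naive fibrant replacement of $P$, push it out along $F$, and check that the induced $\widehat F$ is an equivalence of categories via \cref{pushofmarkedtrivfib} and \cref{prop:trivfibbtwGF}. However, there is a genuine gap at one step.

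You write: ``Since marked anodyne extensions are closed under pushout (weak saturation), $\iota_\cQ$ is again a marked anodyne extension, so $(\widehat\cQ,E_{\widehat\cQ})\to\cC^\sharp$ is naive fibrant.'' The ``so'' here is a non sequitur: the fact that $\iota_\cQ$ is anodyne says nothing about the fibrancy of its codomain. An anodyne extension can land anywhere; it is only when you already know the target is naive fibrant that such a map exhibits a fibrant replacement. This matters because both your later appeal to \cref{prop:trivfibbtwGF} (whose hypothesis requires the codomain to be a cart-marked Grothendieck fibration) and the very definition of a naive fibrant replacement of $F$ depend on $(\widehat\cQ,E_{\widehat\cQ})\to\cC^\sharp$ being naive fibrant.

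The paper closes this gap with \cref{trivfibfromGfib}, which you never invoke: once \cref{pushofmarkedtrivfib} gives that $\widehat F$ is a marked trivial fibration, and since its domain $(\widehat\cP,E_{\widehat\cP})\to\cC^\sharp$ is a cart-marked Grothendieck fibration, \cref{trivfibfromGfib} forces the codomain $(\widehat\cQ,E_{\widehat\cQ})\to\cC^\sharp$ to be one as well. With that correction in place your argument goes through and coincides with the paper's.
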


\begin{proof}
Let $F\colon (\cP,E_\cP)\to (\cQ,E_\cQ)$ be a marked trivial fibration in $\Cat^+_{/\cC^\sharp}$; we wish to find a naive fibrant
replacement of $F$  whose underlying functor is an equivalence of categories. Let $J\colon (\cP,E_\cP)\to (\widehat{\cP},E_{\widehat{\cP}})$ be a naive fibrant replacement of $(\cP,E_\cP)\to \cC^\sharp$ in $\Cat^+_{/\cC^\sharp}$, and consider the following pushout square of marked functors over $\cC^\sharp$. 
\begin{tz}
\node[](1) {$(\cP,E_\cP)$}; 
\node[right of=1,xshift=1cm](2) {$(\cQ,E_\cQ)$}; 
\node[below of=1](1') {$(\widehat{\cP},E_{\widehat{\cP}})$};
\node[below of=2](2') {$(\cQ',E_{\cQ'})$}; 

\draw[->] (1) to node[above,la]{$F$} (2); 
\draw[->] (1) to node[left,la]{$J$} (1'); 
\draw[->] (2) to node[right,la]{$J'$} (2');
\draw[->] (1') to node[below,la]{$F'$} (2');
\node at ($(2')-(.3cm,-.3cm)$) {$\ulcorner$};
\end{tz}

By \cref{pushofmarkedtrivfib}, the functor $F'$ is a marked trivial fibration and, by \cref{trivfibfromGfib}, the induced marked functor $(\cQ',E_{\cQ'})\to \cC^\sharp$ is a cart-marked Grothendieck fibration. Moreover, the marked functor $J'$ is a marked anodyne extension as a pushout of $J$. Hence $(\cQ',E_{\cQ'})$ is a naive fibrant replacement of $(\cQ,E_\cQ)$ and $F'$ a naive fibrant replacement of $F$. By \cref{prop:trivfibbtwGF}, we see that $F'$ is an equivalence of categories and hence conclude that $F$ is a weak equivalence. 
\end{proof}

Before moving on to the main theorem, we explain how one can use the structure we have outlined so far to construct path objects for cart-marked Grothendieck fibrations, which are needed to apply \cite[Theorem 2.8]{GMSV}.

\begin{constr}
Let $P\colon (\cP,E)\to \cC^{\sharp}$ be a cart-marked Grothendieck fibration. We construct a diagram over $\cC^\sharp$
\[ (\cP,E)\to (\mathrm{Path}_\cC (\cP),E')\to (\cP\times_\cC\cP, E\times_{\mathrm{mor}\cC} E).\] 

First, the path object $(\mathrm{Path}_\cC (\cP),E')$ is the marked category whose
\begin{numbered}[leftmargin=1.1cm]
    \item objects are the isomorphisms $g\colon a'\xrightarrow{\cong} a$ in $\cP$ such that $Pg=\id_{Pa}$, 
    \item morphisms $(g\colon a'\xrightarrow{\cong} a)\to (h\colon b'\xrightarrow{\cong} b)$ are morphisms $f'\colon a'\to b'$ and $f\colon a\to b$ making the following diagram commute 
    \begin{tz}
\node[](1) {$a'$}; 
\node[right of=1](2) {$b'$}; 
\node[below of=1](1') {$a$}; 
\node[below of=2](2') {$b$}; 

\draw[->] (1) to node[above,la]{$f'$} (2); 
\draw[->] (1) to node[left,la]{$g$} node[right,la]{$\cong$} (1'); 
\draw[->] (2) to node[right,la]{$h$} node[left,la]{$\cong$} (2'); 
\draw[->] (1') to node[below,la]{$f$} (2');
\end{tz}
    and such that $Pf'=Pf$, 
    \item[(m)] marking consists of pairs $(f',f)$ where both $f'$ and $f$ are marked in $\cP$. 
\end{numbered}

Next, the marked functor $(\cP,E)\to (\mathrm{Path}_\cC (\cP),E')$ sends
\begin{numbered}[leftmargin=1.1cm]
    \item an object $a\in \cP$ to the identity $\id_a$, 
    \item a morphism $f\colon a\to b$ in $\cP$ to the morphism $(f,f)\colon \id_a\to \id_b$. 
\end{numbered}

Finally, the marked functor $(\mathrm{Path}_\cC (\cP),E')\to (\cP\times_\cC\cP, E\times_{\mathrm{mor}\cC} E)$ sends
\begin{numbered}[leftmargin=1.1cm]
    \item an object $a'\xrightarrow{\cong} a$ in $\mathrm{Path}_\cC (\cP)$ to the pair $(a',a)$, 
    \item a morphism $(f',f)$ in $\mathrm{Path}_\cC (\cP)$ to the pair $(f',f)$. 
\end{numbered}
\end{constr}

\begin{prop}\label{mpath}
    Let $P\colon (\cP,E)\to \cC^{\sharp}$ be a cart-marked Grothendieck fibration. Then 
    \[ (\cP,E)\to (\mathrm{Path}_\cC (\cP),E')\to (\cP\times_\cC\cP, E\times_{\mathrm{mor}\cC} E)\]
    is a path object over $\cC^\sharp$.
\end{prop}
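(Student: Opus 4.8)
The plan is to verify the three defining properties of a path object for the factorization
\[ (\cP,E)\xrightarrow{\ \delta\ } (\Path_\cC(\cP),E')\xrightarrow{\ \pi\ } (\cP\times_\cC\cP, E\times_{\mathrm{mor}\cC} E) \]
of the diagonal over $\cC^\sharp$: namely, that $\delta$ is a marked weak equivalence, that $\pi$ is a marked naive fibration, and that $\pi\delta$ is the diagonal marked functor. The last of these is immediate from the explicit formulas in the Construction: $\pi\delta$ sends $a$ to $(a,a)$ and $f$ to $(f,f)$. For $\delta$, the natural candidate for a homotopy inverse (in fact a one-sided inverse) is the ``source'' marked functor $\Path_\cC(\cP)\to\cP$ sending $(g\colon a'\xrightarrow{\cong}a)$ to $a'$ and $(f',f)$ to $f'$; this is well-defined and marking-preserving since $f'$ is marked exactly when the pair $(f',f)$ is, and it is split by $\delta$ on the nose. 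A natural isomorphism between $\delta$ precomposed with nothing and the identity of $\Path_\cC(\cP)$ is given objectwise by $g$ itself, viewed as a morphism $\id_{a'}\to g$ in $\Path_\cC(\cP)$; one checks this is natural and that its underlying functor data make $\delta$ an equivalence of categories. Since $\Path_\cC(\cP)$ is itself a cart-marked Grothendieck fibration over $\cC^\sharp$ (its projection sends $g$ to $Pa$; cartesian lifts are built from those of $\cP$ using that $g$, $h$ are $P$-isos over identities), the class of weak equivalences between fibrant objects applies directly, so $\delta$ being an equivalence of categories makes it a marked weak equivalence.

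The main work is showing $\pi$ is a marked naive fibration, i.e.\ that it lifts against the five generating marked anodyne extensions (i)--(v). Because both source and target of $\pi$ are cart-marked Grothendieck fibrations, by (the proof of) \cref{naivefibvsisofib} it suffices to check that $\pi$ is an isofibration on underlying categories and that it lifts against (i). For the isofibration condition: given $g\colon a'\xrightarrow{\cong}a$ in $\Path_\cC(\cP)$ and an isomorphism $(u',u)\colon (b',b)\xrightarrow{\cong}(a',a)$ in $\cP\times_\cC\cP$, one transports $g$ along $(u',u)$ to the isomorphism $u^{-1}gu'\colon b'\xrightarrow{\cong}b$, which lies over $\id_{Pb}$ since $Pu'=Pu$; this gives a lift. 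For the lifting against (i): a commuting square with bottom edge $[1]^\sharp\xrightarrow{(Pf)} \cC^\sharp$ and a chosen object $g\colon a'\xrightarrow{\cong}a$ over the target, together with a pair of $P$-cartesian lifts in $\cP$ of $f$ with codomains $a'$ and $a$, must be completed; here one uses \cref{propPcart}(iii) to produce the unique isomorphism over the identity relating the two chosen lifts, assembling a morphism in $\Path_\cC(\cP)$ whose image is the given one. The one genuinely delicate point is checking that the marked morphism produced in the lift against (i) is again marked in $\Path_\cC(\cP)$, i.e.\ that both its components are $P$-cartesian — the first component is cartesian by construction, and the second is forced to be cartesian because it is obtained from the first by composing with $g$, $g'$ which are isomorphisms, hence $P$-cartesian by \cref{propPcart}(ii), and cartesian morphisms compose by \cref{propPcart}(i).

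I expect the main obstacle to be bookkeeping rather than conceptual: one must repeatedly use that $\Path_\cC(\cP)$ is fibrant (so that \cref{naivefibvsisofib} is available and reduces five lifting checks to one), and one must be careful that every morphism exhibited as a lift genuinely preserves the marking, which always comes down to \cref{propPcart}(i) and (ii). Once the isofibration property and the single lift against (i) are in place, \cref{naivefibvsisofib} closes out that $\pi\in\nfib^+$, and combined with $\delta$ being a marked weak equivalence and $\pi\delta$ being the diagonal, the factorization is a path object over $\cC^\sharp$, as claimed.
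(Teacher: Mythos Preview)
Your proposal is correct and follows essentially the same route as the paper: verify that $\mathrm{Path}_\cC(\cP)\to\cC^\sharp$ is a cart-marked Grothendieck fibration, invoke \cref{naivefibvsisofib} to reduce the naive-fibration check for $\pi$ to the isofibration condition, and check directly that $\delta$ is an equivalence of categories between fibrant objects. Note that once both endpoints of $\pi$ are known to be fibrant, \cref{naivefibvsisofib} makes the isofibration check alone sufficient, so your separate verification of the lift against (i) is redundant.
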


\begin{proof}
    By direct inspection, we see that the above composite is the diagonal morphism at $(\cP,E)$ over $\cC^\sharp$. Moreover, one can check that in the factorization
    \[ \cP\to \mathrm{Path}_\cC (\cP)\to \cP\times_\cC\cP, \]
    the first functor is an equivalence of categories and the second one an isofibration. To complete the proof, it suffices to show that marked functor $P'\colon (\mathrm{Path}_\cC (\cP),E')\to \cC^\sharp$ is a cart-marked Grothendieck fibration, since then \cref{naivefibvsisofib} ensures that the second functor above is a naive fibration as desired.

    To do this, first note that a morphism $(f',f)$ in $\mathrm{Path}_\cC (\cP)$ is $P'$-cartesian if and only if both $f'$ and $f$ are $P$-cartesian; hence $P'$ is cart-marked since $P$ is so. We now show that $P'$ is a Grothendieck fibration. Given an object $g\colon a'\xrightarrow{\cong} a$ in $\mathrm{Path}_\cC (\cP)$ and a morphism $k\colon c\to Pa=Pa'$ in $\cC$,  there exist $P$-cartesian lifts $f\colon b\to a$ and $f'\colon b'\to a'$ of $k$. Then, using \cref{propPcart}, we see that $f\colon b\to a$ and $gf'\colon b'\to a$ are two $P$-cartesian lifts of $k$ and so there is an isomorphism $h\colon b'\xrightarrow{\cong} b$ such that $fh=gf'$. This gives a $P'$-cartesian lift $(f',f)\colon h\to g$ of $k$, which concludes the proof. 
\end{proof}

We can now prove the existence of the proposed model structure on $\Cat^+_{/\cC^\sharp}$ for Grothen\-dieck 
fibrations.

\begin{theorem}\label{thm:cartesianMS}
There is a combinatorial model structure on the category $\Cat^+_{/\cC^\sharp}$, denoted by $(\Cat^+_{/\cC^\sharp})_{\mathrm{Grfib}}$, in which 
\begin{rome}[leftmargin=1.1cm]
\item the cofibrations are the injective-on-objects functors,
\item the fibrant objects are the cart-marked Grothendieck fibrations, 
\item the weak equivalences between fibrant objects are the equivalences of categories, 
\item the fibrations between fibrant objects are the isofibrations.
\end{rome}
\end{theorem}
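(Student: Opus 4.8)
The plan is to invoke \cite[Theorem 2.8]{GMSV} together with \cite[Proposition 2.21]{GMSV}, following exactly the template of the proof of \cref{thm:discreteMS}: the auxiliary weak factorization system is $(\an^+,\nfib^+)$, and the class $\cW_f$ of weak equivalences between fibrant objects is taken to be the marked functors between cart-marked Grothendieck fibrations whose underlying functor is an equivalence of categories. Before checking the conditions of the cited theorem, I would record the two structural inputs: the marked cofibrations are cofibrantly generated by a set (as observed right after \cref{charmtrivfib}, with generators extracted from its proof), and each of the five generating marked anodyne extensions is injective on objects, so that $\an^+$ is contained in the class of marked cofibrations, as required.

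The bulk of the mathematical content has already been established in the preceding results, so verifying the hypotheses of \cite[Proposition 2.21]{GMSV} amounts to assembling them. Condition~(1), that marked trivial fibrations are weak equivalences, is precisely \cref{mtrivfibarewe}. Condition~(2), the $2$-out-of-$6$ property for $\cW_f$, holds because equivalences of categories satisfy $2$-out-of-$6$ and because, in any composable chain whose relevant composites lie in $\cW_f$, all of the objects involved are cart-marked Grothendieck fibrations, so the individual maps again lie between fibrant objects. Condition~(3), accessibility of $\cW_f$, follows as in the discrete case from the fact that the class of equivalences of categories is accessible, together with the fact that the cart-marked Grothendieck fibrations form an accessible subcategory, being the naive fibrant objects for the accessible weak factorization system $(\an^+,\nfib^+)$. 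Condition~(P), the existence of path objects for fibrant objects, is exactly \cref{mpath}.

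It then remains to check condition~(5): a marked naive fibration $F$ between cart-marked Grothendieck fibrations whose underlying functor is an equivalence of categories must be a marked trivial fibration. By \cref{naivefibvsisofib}, such an $F$ is an isofibration; being in addition an equivalence of categories, \cref{prop:trivfibbtwGF} shows that $F$ is a marked trivial fibration. With all conditions verified, \cite[Theorem 2.8]{GMSV} produces a combinatorial model structure on $\Cat^+_{/\cC^\sharp}$ whose cofibrations are the marked cofibrations, i.e.\ the injective-on-objects functors, giving item~(i); whose fibrant objects are the naive fibrant objects, i.e.\ the cart-marked Grothendieck fibrations by the characterization established earlier, giving item~(ii); whose weak equivalences between fibrant objects are the maps in $\cW_f$, giving item~(iii) by definition; and whose fibrations between fibrant objects are the marked naive fibrations between cart-marked Grothendieck fibrations, which are the isofibrations by \cref{naivefibvsisofib}, giving item~(iv).

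Since essentially every substantive ingredient is furnished by the earlier results, I do not expect a genuine obstacle in this final step; the pieces that would ordinarily be delicate --- recognizing that naive fibrant objects are exactly the cart-marked Grothendieck fibrations, constructing path objects, and proving that trivial fibrations are weak equivalences --- have all been handled in advance. If anything, the one point demanding a little care is the accessibility of $\cW_f$, which one dispatches by presenting it as an accessible full subcategory of the arrow category of $\Cat^+_{/\cC^\sharp}$, cut out by an accessible condition on the source and target together with the accessible condition that the underlying functor be an equivalence of categories.
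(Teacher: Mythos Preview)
Your proposal is correct and follows essentially the same approach as the paper: both apply \cite[Theorem 2.8]{GMSV} via \cite[Proposition 2.21]{GMSV}, verifying conditions (1), (2), (3), (P), and (5) using respectively \cref{mtrivfibarewe}, the 2-out-of-6 property for equivalences of categories, accessibility of equivalences, \cref{mpath}, and the combination of \cref{naivefibvsisofib} with \cref{prop:trivfibbtwGF}. Your write-up is somewhat more explicit on a couple of points the paper leaves implicit (that $\an^+$ consists of marked cofibrations, and the extra care around accessibility of $\cW_f$), but the argument is the same.
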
 

\begin{proof}
    We apply \cite[Theorem 2.8]{GMSV} using \cite[Proposition 2.21]{GMSV}. First note that the class of marked cofibrations is indeed cofibrantly generated by a set as mentioned in the proof of \cref{charmtrivfib}. Condition (1) (trivial fibrations are weak equivalences) is \cref{mtrivfibarewe}. Condition (2) (2-out-of-6 for $\cW_f$) is clear as the weak equivalences between fibrant objects are equivalences of categories. Similarly, condition (3) (accessibility of $\cW_f$) follows from the fact that the class of equivalences of categories is accessible. Condition (P) (path objects for fibrant objects) is the content of \cref{mpath}. Finally, condition (5) (naive fibrations that are weak equivalences between fibrant objects are trivial fibrations) is ensured by \cref{naivefibvsisofib,prop:trivfibbtwGF}.

This proves that there exists a model structure on $\Cat^+_{/\cC^\sharp}$ with cofibrations, weak equivalences between fibrant objects, and fibrant objects as described in the statement. The description in item (iv) is then given by \cref{naivefibvsisofib}.
\end{proof}

\section{The marked Grothendieck construction}\label{section:markedgroth}

We now consider the Grothendieck construction of a functor $F\colon \cC^\op\to\Cat$. This construction gives a functor 
\[ \textstyle \int_\cC\colon [\cC^{\op},\Cat]\to \Cat_{/\cC}. \]
Since our setting uses marked categories, we need to adapt this construction accordingly, and so we begin by presenting a marked version of the Grothendieck construction
\[ \textstyle \int^+_\cC\colon [\cC^{\op},\Cat]\to \Cat^+_{/\cC^\sharp}\]
which simply endows each category $\int_\cC F$ with the cart-marking; that is, the one where only the cartesian morphisms are marked. The aim of this section is then to prove that this marked Grothendieck construction is a Quillen equivalence between the projective model structure on $[\cC^{\op},\Cat]$ and the model structure $(\Cat^+_{/\cC^\sharp})_{\mathrm{Grfib}}$ for Grothendieck fibrations constructed in \cref{thm:cartesianMS}.

\begin{defn}
We define a functor $\int^+_\cC\colon [\cC^{\op},\Cat]\to \Cat^+_{/\cC^\sharp}$ as follows.
\begin{numbered}[leftmargin=1.1cm]
\item Given a functor $F\colon \cC^{\op}\to \Cat$, its \textbf{marked Grothendieck construction} $\int^+_\cC F$ is the marked category whose 
\begin{numbered}[start=0]
\item objects are pairs $(c,x)$ of an object $c\in \cC$ and an object $x\in Fc$, 
\item morphisms $(c,x)\to (d,y)$ are pairs $(f,\psi)$ of a morphism $f\colon c\to d$ in $\cC$ and a morphism $\psi\colon x\to Ff(y)$ in $Fc$, 
\item[(c)] composition of morphisms $(f,\psi)\colon (c,x)\to (d,y)$ and $(g,\varphi)\colon (d,y)\to (e,z)$ is given by $(gf, Ff(\varphi) \psi)\colon (c,x)\to (e,z)$, where $Ff(\varphi) \psi$ is the composite
\[ x\xrightarrow{\psi} Ff(y)\xrightarrow{Ff(\varphi)} Ff(Fg(z))=F(gf)(z). \]
\item[(m)] a morphism $(f,\psi)$ is marked if $\psi$ is an isomorphism.
\end{numbered}
This comes with a marked canonical projection $\int^+_\cC F\to \cC^\sharp$ onto the first coordinate. 

\item Given a natural transformation $\alpha\colon F\Rightarrow G$ between functors $F,G\colon \cC^{\op}\to \Cat$, the induced marked functor $\int^+_\cC\alpha\colon \int^+_\cC F\to \int^+_\cC G$ over $\cC^\sharp$ sends
\begin{numbered}[start=0]
\item an object $(c,x)$ to the object $(c,\alpha_c(x))$, 
\item a morphism $(f,\psi)\colon (c,x)\to (d,y)$ to the morphism \[ (f,\alpha_c(\psi))\colon (c,\alpha_c(x))\to (d,\alpha_d(y)), \] where $\alpha_c(\psi)\colon \alpha_c(x)\to \alpha_c(Ff(y))=Gf(\alpha_d(y))$.
\end{numbered}
\end{numbered}
\end{defn}

\begin{rem} \label{mGCisfibrant}
    Let $F\colon \cC^{\op}\to \Cat$ be a functor, and consider the projection $P\colon \int_\cC^+ F\to \cC^\sharp$. One can check that a morphism $(f,\psi)$ in $\int_\cC^+ F$ is $P$-cartesian if and only if $\psi$ is invertible. Moreover, given an object $(d,y)$ of $\int_\cC^+ F$ and a morphism $f\colon c\to d$ in $\cC$, a $P$-cartesian lift of $f$ with target $(d,y)$ is given by $(f,\id_{Ff(y)})\colon (c,Ff(y))\to (d,y)$. This shows that the marked Grothendieck construction $P\colon \int_\cC^+ F\to \cC^\sharp$ is a cart-marked Grothendieck fibration. 
\end{rem}

The marked Grothendieck construction functor admits a left adjoint which we introduce shortly, after a brief discussion of marked comma categories and of an adjunction between marked and unmarked categories related to the \emph{natural marking}. 

\begin{rem}
The comma categories of \cref{commaoverP} also admit a marked version. If $P\colon (\cP,E)\to \cC^{\sharp}$ is a marked functor and $c$ is an object in $\cC$, the comma category $c\downarrow P$ can be endowed with a marking $E_c$, where a morphism $g\colon (a',f')\to (a,f)$ in $c\downarrow P$ is marked if and only if the morphism $g\colon a'\to a$ in $\cP$ is marked. 

Note that the functor $d\downarrow P\to c\downarrow P$ induced by pre-composition along a morphism $c\to d$ in $\cC$ preserves this marking. Moreover, if $F\colon (\cP,E_\cP)\to (\cQ,E_\cQ)$ is a marked functor over $\cC^\sharp$, then for every object $c\in \cC$ the induced functor $c\downarrow P\to c\downarrow Q$ also preserves the marking. 
\end{rem}

\begin{rem} \label{rem:isolocadj}
There is a functor $(-)^\natural\colon \Cat\to \Cat^+$ that endows a category $\cC$ with the natural marking; namely, it marks the isomorphisms. This functor has a left adjoint $\Loc\colon \Cat^+\to \Cat$ which sends a marked category $(\cC,E)$ to the localization $\cC[E^{-1}]$ of $\cC$ at the marked morphisms. Recall that such a localization $\cC[E^{-1}]$ comes with a functor $\gamma\colon \cC\to \cC[E^{-1}]$ satisfying the following universal property: for every functor $F\colon \cC\to \cD$ sending morphisms in $E$ to isomorphisms in $\cD$, there is a unique functor $\overline{F}\colon \cC[E^{-1}]\to \cD$ such that $F=\overline{F}\gamma$.
\end{rem}

\begin{defn}
We define a functor $\cT^+_\cC\colon \Cat^+_{/\cC^\sharp}\to [\cC^{\op},\Cat]$ as follows.
\begin{numbered}[leftmargin=1.1cm]
\item Given a marked functor $P\colon (\cP,E)\to \cC^\sharp$, the functor $\cT^+_\cC P\colon \cC^{\op}\to \Cat$  sends
\begin{numbered}[start=0]
\item an object $c\in \cC$ to the category $(c\downarrow P)[E_c^{-1}]$, 
\item a morphism $c\to d$ in $\cC$ to the induced functor $(d\downarrow P)[E_d^{-1}]\to (c\downarrow P)[E_c^{-1}]$. 
\end{numbered}
\item Given a marked functor $F\colon (\cP,E_\cP)\to (\cQ,E_\cQ)$ over $\cC^\sharp$, the  natural transformation $\cT^+_\cC F\colon \cT^+_\cC P\Rightarrow \cT^+_\cC Q$ is the one whose component at an object $c\in \cC$ is given by the induced functor $(c\downarrow P)[(E_\cP)_c^{-1}]\to (c\downarrow Q)[(E_\cQ)_c^{-1}]$.
\end{numbered}
\end{defn}

\begin{theorem}\label{adjunctionmarkedgroth}
There is an adjunction
\begin{tz}
\node[](1) {$[\cC^{\op},\Cat]$}; 
\node[right of=1,xshift=1.6cm](2) {$\Cat^+_{/\cC^{\sharp}}$}; 

\draw[->] ($(1.east)-(0,5pt)$) to node[below,la]{$\int^+_\cC$} ($(2.west)-(0,5pt)$);
\draw[->] ($(2.west)+(0,5pt)$) to node[above,la]{$\cT^+_\cC$} ($(1.east)+(0,5pt)$);

\node[la] at ($(1.east)!0.5!(2.west)$) {$\bot$};
\end{tz}
\end{theorem}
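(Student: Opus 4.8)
The plan is to establish the adjunction $\cT^+_\cC \dashv \int^+_\cC$ (so $\cT^+_\cC$ is the left adjoint, following the convention of \cref{adjclassicalGC}) by exhibiting a bijection
\[
\Cat^+_{/\cC^\sharp}\Bigl(P,\ \textstyle\int^+_\cC F\Bigr)\ \cong\ [\cC^\op,\Cat]\bigl(\cT^+_\cC P,\ F\bigr),
\]
natural in the marked functor $P\colon(\cP,E)\to\cC^\sharp$ and in the functor $F\colon\cC^\op\to\Cat$. The strategy is to unwind each side into concrete combinatorial data and then match them; there are no deep ingredients, only the universal property of localization from \cref{rem:isolocadj}.

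First I would unpack the right-hand side. By the universal property of the localizations defining $\cT^+_\cC P$, a natural transformation $\cT^+_\cC P\Rightarrow F$ is the same as a family of functors $\beta_c\colon c\downarrow P\to Fc$, indexed by $c\in\cC$, each sending the marked morphisms $E_c$ to isomorphisms, and satisfying $\beta_c\circ g^*=Fg\circ\beta_d$ for every $g\colon c\to d$ in $\cC$, where $g^*\colon d\downarrow P\to c\downarrow P$ is the precomposition functor of \cref{commaoverP}. On the other side, since the projection $\int^+_\cC F\to\cC^\sharp$ records the first coordinate, a morphism $P\to\int^+_\cC F$ in $\Cat^+_{/\cC^\sharp}$ is exactly a marked functor $H\colon(\cP,E)\to\int^+_\cC F$ over $\cC^\sharp$, that is: an object $x_a\in F(Pa)$ for each $a\in\cP$, and a morphism $\psi_\phi\colon x_a\to F(P\phi)(x_{a'})$ in $F(Pa)$ for each $\phi\colon a\to a'$ in $\cP$, subject to $\psi_{\id_a}=\id_{x_a}$, to $\psi_{\phi'\phi}=F(P\phi)(\psi_{\phi'})\circ\psi_\phi$, and to the requirement that $\psi_\phi$ be an isomorphism whenever $\phi\in E$.

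Next I would give the correspondence both ways. From $\{\beta_c\}$, set $x_a:=\beta_{Pa}(a,\id_{Pa})$ and $\psi_\phi:=\beta_{Pa}(\phi)$, reading $\phi$ as the morphism $(a,\id_{Pa})\to(a',P\phi)$ in $Pa\downarrow P$; the identity $(a',P\phi)=(P\phi)^*(a',\id_{Pa'})$ together with naturality of $\{\beta_c\}$ rewrites the codomain $\beta_{Pa}(a',P\phi)$ as $F(P\phi)(x_{a'})$, so the types agree, and functoriality of $\beta_{Pa}$ together with a further use of naturality produces the cocycle identities, while the hypothesis on $E_c$ gives the marking condition. Conversely, from $\{(x_a,\psi_\phi)\}$, set $\beta_c(a,f\colon c\to Pa):=Ff(x_a)$ on objects and $\beta_c\bigl(\phi\colon(a,f)\to(a',f')\bigr):=Ff(\psi_\phi)$ on morphisms; this is well-typed since $f'=P\phi\circ f$ forces $Ff(\psi_\phi)\colon Ff(x_a)\to F(P\phi\circ f)(x_{a'})=Ff'(x_{a'})$, functoriality of each $Ff$ makes $\beta_c$ a functor inverting $E_c$, and $F(f\circ g)=Fg\circ Ff$ gives the strict naturality square $\beta_c\circ g^*=Fg\circ\beta_d$. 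One then checks that the two assignments are mutually inverse and that the resulting bijection is natural in $P$ (by precomposition with marked functors over $\cC^\sharp$) and in $F$ (by postcomposition with natural transformations), which yields the adjunction.

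I do not expect any genuinely hard step: the content is entirely a matter of choosing the right reformulation of each side and then unwinding definitions. The one place that demands care is the naturality bookkeeping in the correspondence—recognising $\beta_{Pa}(a',P\phi)$ as $F(P\phi)(x_{a'})$, and matching the composition law of the comma category $Pa\downarrow P$ with the cocycle identity for the $\psi_\phi$—together with keeping straight which implications (as opposed to equivalences) the marking conditions on a marked functor impose.
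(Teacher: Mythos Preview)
Your argument is correct and complete in outline; the bookkeeping you flag is indeed the only place requiring attention, and you handle it properly.

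The paper takes a different route: it writes down the unit $\eta_P\colon(\cP,E)\to\int^+_\cC\cT^+_\cC(P)$ and the counit $\varepsilon_F\colon\cT^+_\cC\int^+_\cC F\Rightarrow F$ explicitly and then appeals to a ``tedious computation'' verifying the triangle identities. This forces the authors to describe the composite $\int^+_\cC\cT^+_\cC(P)$ concretely, which is awkward because $\cT^+_\cC$ involves localizations and hence $\int^+_\cC\cT^+_\cC(P)$ is itself a localized category whose morphisms and markings must be spelled out carefully (including formal inverses). Your hom-set approach sidesteps this entirely: by invoking the universal property of localization on the $[\cC^\op,\Cat]$ side, you never have to name an object of any localized category, and the verification reduces to matching the combinatorics of $c\downarrow P$ against the cocycle data defining a marked functor into $\int^+_\cC F$. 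The paper's method has the minor advantage that the explicit unit is reused later in the proof of \cref{thm:markedGCQE}, but your bijection produces the same unit upon transposing identities, so nothing is lost.
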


\begin{proof} 
In order to construct the unit $\eta\colon \id_{\Cat^+_{/\cC^\sharp}}\Rightarrow \int_\cC^+\cT_\cC^+$, we begin by computing the marked Grothendieck construction $\int^+_\cC \cT^+_\cC(P)$ of a given marked functor $P\colon (\cP,E)\to \cC^\sharp$. This is the marked category whose underlying category is the localization of the category whose 
\begin{numbered}[leftmargin=1.1cm]
\item objects are tuples $(c,a,g)$ of an object $c\in \cC$, an object $a\in \cP$, and a morphism $g\colon c\to Pa$ in $\cC$, 
\item morphisms $(c',a',g')\to (c,a,g)$ are pairs $(f,h)$ of a morphism $f\colon c'\to c$ in $\cC$ and a morphism $h\colon a'\to a$ in $\cP$ such that $gf=(Ph)g'$,
\end{numbered}
localized at the morphisms $(f,h)$ with $h\in E$, and whose 
\begin{numbered}[leftmargin=1.1cm]
\item[(m)] a morphism $(f,h)$ is marked if either $h$ is in $E$ or $h$ is an isomorphism in $\cP$; additionally, the formal inverses $(f,h)^{-1}$ with $h\in E$ are marked.
\end{numbered}
Then the component of the unit at the marked functor $P$ is given by the marked functor $\eta_P\colon (\cP,E)\to \int^+_\cC \cT^+_\cC(P)$ over $\cC^\sharp$ sending
\begin{numbered}[leftmargin=1.1cm]
\item an object $a\in \cP$ to the tuple $(Pa,a,\id_{Pa})$, 
\item a morphism $h\colon a'\to a$ in $\cP$ to the pair $(Ph,h)$. 
\end{numbered}
Note that if $h$ is marked, then so is $(Ph,h)$ and so $\eta_P$ preserves the marking. By unpacking the definitions, one can verify that these $\eta_P$ assemble into a natural transformation $\eta\colon \id_{\Cat^+_{/\cC^\sharp}}\Rightarrow \int_\cC^+\cT_\cC^+$.

We now focus on constructing the counit $\varepsilon\colon \cT_\cC^+\int_\cC^+\Rightarrow\id_{[\cC^{\op},\Cat]}$, for which we first compute the functor $\cT_\cC^+\int_\cC^+ F\colon \cC^{\op}\to \Cat$ for any given functor $F\colon \cC^{\op}\to \Cat$. It sends 
\begin{numbered}[leftmargin=1.1cm]
    \item an object $c\in \cC$ to the category $\cT_\cC^+\int_\cC^+ F(c)$, which is the localization of the category whose
    \begin{numbered}[start=0]
        \item objects are tuples $(d,y,f)$ of an object $d\in \cC$, an object $y\in Fd$, and a morphism $f\colon c\to d$, 
        \item morphisms $(d',y',f')\to (d,y,f)$ are pairs $(g,\psi)$ of a morphism $g\colon d'\to d$ in $\cC$ and a morphism $\psi\colon y'\to Fg(y)$ such that $gf'=f$,
\end{numbered}
    localized at the morphisms $(g,\psi)$ such that $\psi$ is invertible. 
    \item a morphism $h\colon c'\to c$ in $\cC$ to the functor $h^*\colon \cT_\cC^+\int_\cC^+ F(c)\to \cT_\cC^+\int_\cC^+ F(c')$ sending 
     \begin{numbered}[start=0]
        \item an object $(d,y,f)$ to the object $(d,y,fh)$, 
        \item a morphism $(g,\psi)$ to the morphism $(g,\psi)$. 
    \end{numbered}
\end{numbered}
Then the component of the counit at the functor $F$ is given by the natural transformation $\varepsilon_F\colon \cT_\cC^+\int_\cC^+ F\Rightarrow F$ whose component at $c\in \cC$ is given by the functor $\cT_\cC^+\int_\cC^+ F(c)\to Fc$ sending 
\begin{numbered}[leftmargin=1.1cm]
    \item an object $(d,y,f)$ to the object $Ff(y)\in Fc$, 
    \item a morphism $(g,\psi)\colon (d',y',f')\to (d,y,f)$ to the morphism 
 \[ Ff'(\psi)\colon Ff'(y')\to (Ff')(Fg)(y)=F(gf')(y)=Ff(y). \] 
\end{numbered}
Note that if $(g,\psi)$ is invertible in $\cT_\cC^+\int_\cC^+ F(c)$, i.e., if $\psi$ is invertible, then so is $Ff'(\psi)$; hence this is well-defined. Once again, a careful study of the definitions shows that these $(\varepsilon_F)_c$ assemble into natural transformations $\varepsilon_F\colon \cT_\cC^+\int_\cC^+ F\Rightarrow F$, which themselves assemble into a natural transformation $\varepsilon\colon \cT_\cC^+\int_\cC^+\Rightarrow\id_{[\cC^{\op},\Cat]}$.

A tedious computation further shows that the unit and counit as defined above satisfy the triangle identities.
\end{proof}

Our goal is now to show that the above adjunction is in fact a Quillen equivalence. For this, we first recall the definition of the projective model structure on $[\cC^{\op},\Cat]$. Just as the model structure on $[\cC^{\op},\Set]$ of \cref{projMSset} relied on the trivial model structure on $\Set$, the projective model structure we now state relies on the canonical model structure on $\Cat$ from \cref{thm:MSCat}. 

\begin{theorem}
    There is a model structure on $[\cC^{\op},\Cat]$, called the projective model structure and denoted by $[\cC^{\op},\Cat]_{\mathrm{proj}}$, in which
    \begin{rome}[leftmargin=1.1cm]
\item the weak equivalences are the level-wise equivalences of categories, 
\item the fibrations are the level-wise isofibrations. 
\end{rome}
\end{theorem}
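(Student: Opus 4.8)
The plan is to obtain this as the projective model structure on the diagram category $[\cC^\op,\Cat]$, transferred from the canonical model structure on $\Cat$ (\cref{thm:MSCat}) along the evaluation functors $\mathrm{ev}_c\colon[\cC^\op,\Cat]\to\Cat$, for $c\in\cC$. The key input is that the canonical model structure on $\Cat$ is \emph{combinatorial}, hence cofibrantly generated: fix a set $I$ of generating cofibrations (for instance the three maps $\emptyset\to[0]$, $[0]\sqcup[0]\to[1]$, and $[1]\sqcup_{[0]\sqcup[0]}[1]\to[1]$ used in the proof of \cref{charmtrivfib}) and a set $J$ of generating trivial cofibrations. Since $\Cat$ is cocomplete, each $\mathrm{ev}_c$ has a left adjoint, the copower $\cC(-,c)\cdot(-)\colon\Cat\to[\cC^\op,\Cat]$ with $(\cC(-,c)\cdot X)(d)=\coprod_{\cC(d,c)}X$. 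The candidate generating sets for $[\cC^\op,\Cat]_{\mathrm{proj}}$ are then $\{\cC(-,c)\cdot i\mid c\in\cC,\ i\in I\}$ and $\{\cC(-,c)\cdot j\mid c\in\cC,\ j\in J\}$, with weak equivalences and fibrations declared level-wise and the cofibrations determined by the left lifting property.

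I would then invoke the standard recognition theorem for cofibrantly generated model categories --- equivalently, the general existence result for projective model structures on functor categories valued in a combinatorial model category (see, for instance, \cite{HTT}). Two hypotheses must be verified. First, smallness of the domains of the two candidate generating sets: this is automatic since $\Cat$ is locally presentable and colimits in $[\cC^\op,\Cat]$ are computed level-wise, so $[\cC^\op,\Cat]$ is again locally presentable. Second, the acyclicity condition, namely that every relative $\{\cC(-,c)\cdot j\}$-cell complex is a level-wise weak equivalence. For this I would note that each $\cC(-,c)\cdot(-)$, being a left adjoint, preserves all colimits, while evaluating $\cC(-,c)\cdot j$ at an object $d\in\cC$ produces the coproduct $\coprod_{\cC(d,c)}j$ --- a coproduct of trivial cofibrations of $\Cat$, hence again one. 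Since trivial cofibrations in $\Cat$ are closed under coproducts, pushouts, and transfinite composition, evaluating any relative $\{\cC(-,c)\cdot j\}$-cell complex at $d$ yields a trivial cofibration of $\Cat$, and in particular a weak equivalence there, for every $d$; this is exactly what is needed.

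This produces the claimed model structure, which is moreover combinatorial: $[\cC^\op,\Cat]$ is locally presentable and the generating sets are small. The only step that is not purely formal is the acyclicity verification, and even there the real content has already been supplied for us --- it reduces verbatim to the closure of trivial cofibrations in the combinatorial model category $\Cat$ under cellular constructions, with nothing $\Cat$-specific left to do. Thus in practice I expect the proof to amount to citing the general projective-model-structure theorem and recording that $\Cat$ is combinatorial, just as the $\Set$-valued analogue \cref{projMSset} arises from the trivial model structure on $\Set$. As there, I would also point out that the class of \emph{cofibrations} is described only implicitly, as the maps with the left lifting property against the level-wise trivial fibrations, since only the weak equivalences and the fibrations play a role in what follows.
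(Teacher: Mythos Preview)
Your argument is correct and is exactly the standard construction of the projective model structure on a diagram category valued in a combinatorial model category. In the paper, however, this theorem is stated without proof: it is presented as a recalled fact, with the sentence preceding it noting only that the projective model structure on $[\cC^{\op},\Cat]$ ``relies on the canonical model structure on $\Cat$'' just as \cref{projMSset} relied on the trivial model structure on $\Set$. So there is nothing to compare --- you have supplied a full (and correct) proof where the paper gives none.
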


\begin{theorem} \label{thm:markedGCQE}
The adjunction 
\begin{tz}
\node[](1) {$[\cC^{\op},\Cat]_{\mathrm{proj}}$}; 
\node[right of=1,xshift=2.4cm](2) {$(\Cat^+_{/\cC^\sharp})_{\mathrm{Grfib}}$}; 

\draw[->] ($(1.east)-(0,5pt)$) to node[below,la]{$\int^+_\cC$} ($(2.west)-(0,5pt)$);
\draw[->] ($(2.west)+(0,5pt)$) to node[above,la]{$\cT^+_\cC$} ($(1.east)+(0,5pt)$);

\node[la] at ($(1.east)!0.5!(2.west)$) {$\bot$};
\end{tz}
is a Quillen equivalence.
\end{theorem}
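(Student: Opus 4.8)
The plan is to follow the template of the proof of \cref{thm:discreteQuillenequiv}: show that $\int^+_\cC$ is right Quillen, that the unit $\eta$ and the counit $\varepsilon$ of the adjunction $\cT^+_\cC\dashv\int^+_\cC$ are respectively level-wise and pointwise weak equivalences, and that both $\cT^+_\cC$ and $\int^+_\cC$ preserve all weak equivalences; the last two facts together exhibit the derived functors on homotopy categories as an adjoint equivalence, which is exactly the statement that $\cT^+_\cC\dashv\int^+_\cC$ is a Quillen equivalence.

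First I would verify that $\int^+_\cC$ is right Quillen. By \cref{mGCisfibrant} it lands in the cart-marked Grothendieck fibrations, hence in the fibrant objects. A level-wise isofibration $\alpha\colon F\Rightarrow G$ is sent to an isofibration---to lift an isomorphism $(f,\chi)$ of $\int^+_\cC G$ one keeps $f$ and lifts $\chi$ through the isofibration $\alpha_c$---so, being a map between fibrant objects, $\int^+_\cC\alpha$ is a fibration by \cref{thm:cartesianMS}. If $\alpha$ is moreover a level-wise trivial fibration, i.e.\ level-wise surjective on objects and fully faithful by \cref{chartrivfib}, then inspecting the hom-sets of the Grothendieck construction shows that $\int^+_\cC\alpha$ is again surjective on objects and fully faithful, hence a marked trivial fibration by \cref{prop:trivfibbtwGF}. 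Thus $\int^+_\cC$ is right Quillen and $\cT^+_\cC$ is left Quillen; and since all objects of $\Cat^+_{/\cC^\sharp}$ are cofibrant and all objects of $[\cC^\op,\Cat]_{\mathrm{proj}}$ are fibrant, Ken Brown's lemma then shows that $\cT^+_\cC$ and $\int^+_\cC$ both preserve all weak equivalences.

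Next I would analyze the unit and counit. For the counit, using the description of $\cT^+_\cC\int^+_\cC F$ computed in the proof of \cref{adjunctionmarkedgroth}, the category $\cT^+_\cC\int^+_\cC F(c)$ is the localization of the category of triples $(d,y,f\colon c\to d)$ with $y\in Fd$ at the morphisms $(g,\psi)$ with $\psi$ invertible, and $\varepsilon_{F,c}$ sends $(d,y,f)\mapsto Ff(y)$. The functor $x\mapsto(c,x,\id_c)$ is a section of $\varepsilon_{F,c}$, and the morphisms $(c,Ff(y),\id_c)\to(d,y,f)$, which become invertible in the localization, assemble into a natural isomorphism identifying it as a quasi-inverse; hence $\varepsilon_F$ is a level-wise equivalence of categories. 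For the unit I would first treat a cart-marked Grothendieck fibration $P\colon(\cP,E)\to\cC^\sharp$: the fiber of $\int^+_\cC\cT^+_\cC P$ over $c$ is $(c\downarrow P)[E_c^{-1}]$, and $\eta_P$ restricts on fibers to the canonical functor $\cP_c\to(c\downarrow P)[E_c^{-1}]$, $a\mapsto(a,\id_c)$; this is an equivalence because the inclusion $\cP_c\hookrightarrow c\downarrow P$ has a right adjoint given by choosing $P$-cartesian lifts, whose counit is pointwise marked, while by \cref{propPcart} the marked morphisms of $c\downarrow P$ lying in $\cP_c$ are exactly the isomorphisms, so localizing at $E_c$ collapses $c\downarrow P$ onto $\cP_c$. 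Thus $\eta_P$ is a cartesian functor between Grothendieck fibrations over $\cC$ which is a fiberwise equivalence, hence an equivalence of categories (a standard fact about Grothendieck fibrations; see \cite{vistoli}), in particular a weak equivalence. For an arbitrary $X\colon(\cP,E)\to\cC^\sharp$ I would choose a naive fibrant replacement $J\colon X\to\widehat X$---a marked anodyne extension, hence a weak equivalence, with $\widehat X$ a cart-marked Grothendieck fibration---and use the naturality square $\eta_{\widehat X}\circ J=\int^+_\cC\cT^+_\cC(J)\circ\eta_X$ together with $2$-out-of-$3$: $\eta_{\widehat X}$ is a weak equivalence by the previous case, $\int^+_\cC\cT^+_\cC(J)$ is one since $J$ is and both functors preserve weak equivalences, and $J$ is one, whence so is $\eta_X$.

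Combining these---both functors preserve weak equivalences, and $\eta$ and $\varepsilon$ are weak equivalences---the induced adjunction on homotopy categories is an adjoint equivalence, so $\cT^+_\cC\dashv\int^+_\cC$ is a Quillen equivalence. The step I expect to be the main obstacle is the fiberwise identification of $\cT^+_\cC$ applied to a cart-marked Grothendieck fibration with its functor of fibers, used both for the counit and for the unit on fibrant objects: this is precisely the point at which the localizations built into $\cT^+_\cC$ must be shown to absorb exactly the cartesian data, and it also relies on the classical---and to be carefully justified---fact that a fiberwise equivalence between Grothendieck fibrations over $\cC$ is an equivalence of categories.
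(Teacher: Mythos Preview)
Your proof is correct, and its overall architecture---show $\int^+_\cC$ is right Quillen, then verify the unit and counit are weak equivalences using Ken Brown---is sound. It differs from the paper's proof in two places, and the comparison is worth recording.

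First, for the unit at a cart-marked Grothendieck fibration $P$, the paper argues directly that $\eta_P$ is essentially surjective and fully faithful on the total category, while you pass through the fibers: you identify the fiber of $\int^+_\cC\cT^+_\cC P$ over $c$ with $(c\downarrow P)[E_c^{-1}]$, show that the coreflective inclusion $\cP_c\hookrightarrow c\downarrow P$ becomes an equivalence after localizing at $E_c$ (since the counit is marked and the right adjoint sends marked morphisms to isomorphisms, using cartesian cancellation), and then invoke the classical fact that a cartesian, fiberwise-equivalence functor between Grothendieck fibrations is an equivalence. Your route is longer but more explicit about why the localizations built into $\cT^+_\cC$ do not introduce spurious morphisms; the paper's full-faithfulness sentence (``uniquely determined by $h$'') tacitly relies on exactly the same fact about $(c\downarrow P)[E_c^{-1}]$, so you are filling in a step the paper leaves implicit.

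Second, to complete the equivalence, the paper does not analyze the counit at all: instead it shows that $\int^+_\cC$ \emph{reflects} weak equivalences and then appeals to the dual of \cite[Lemma~3.3]{mehmet}. You instead verify the counit directly, producing the section $x\mapsto(c,x,\id_c)$ and the natural isomorphism furnished by the localized morphisms $(f,\id_{Ff(y)})$. Both arguments are short; yours is perhaps more symmetric and self-contained, while the paper's avoids touching $\varepsilon$ entirely at the cost of an extra reflection-of-equivalences computation. Either route leads to the Quillen equivalence.
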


\begin{proof}
    To show that $\int^+_\cC$ is right Quillen, we must show that it preserves fibrations and trivial fibrations.  Let $\alpha\colon F\Rightarrow G$ be a (trivial) fibration in $[\cC^{\op},\Cat]_{\mathrm{proj}}$; i.e., for every $c\in \cC$, the functor $\alpha_c\colon Fc\to Gc$ is a (trivial) fibration in $\Cat$. Since the marked functor $\int_\cC^+ \alpha\colon \int_\cC^+ F\to \int_\cC^+ G$ acts as $\alpha_c$ on each component, we get that its underlying functor is a (trivial) fibration in $\Cat$. Furthermore, as explained in \cref{mGCisfibrant}, the functor $\int^+_\cC$ sends every functor in $[\cC^{\op},\Cat]$ to a cart-marked Grothendieck fibration over $\cC^\sharp$. Hence, we can use \cref{naivefibvsisofib,prop:trivfibbtwGF} to conclude that $\int_\cC^+ \alpha$ is a (trivial) fibration in $(\Cat^+_{/\cC^\sharp})_{\mathrm{Grfib}}$. 

Next, we show that the unit is a weak equivalence. To achieve this, we first restrict to fibrant objects, and prove that $\eta_P\colon (\cP,E)\to \int_\cC ^+ \cT_\cC^+(P)$ is an equivalence of categories for any cart-marked Grothendieck fibration $P\colon (\cP,E)\to \cC^\sharp$.

To show that $\eta_P$ is essentially surjective on objects, let $(c,a,g)$ be an object in $\int_\cC ^+ \cT_\cC^+(P)$, where $c\in \cC$ and $a\in \cP$ are objects and $g\colon c\to Pa$ is a morphism in $\cC$. Since $P$ is a Grothendieck fibration, there is a $P$-cartesian lift $h\colon a'\to a$ of $g$. In particular, we have that $Pa'=c$ and, since $P$ is cart-marked, that $h$ is marked. Hence  $(\id_c,h)\colon \eta_P(a')=(Pa',a',\id_{Pa'})\to (c,a,g)$ gives the desired isomorphism in $\int_\cC ^+ \cT_\cC^+(P)$. 

Given objects $a,a'\in \cP$, a morphism $(Pa',a',\id_{Pa'})\to (Pa,a,\id_{Pa})$ in $\int_\cC ^+ \cT_\cC^+(P)$ consists of a pair $(f,h)$ of morphisms $f\colon Pa'\to Pa$ in $\cC$ and $h\colon a'\to a$ in $\cP$ such that $f=Ph$. We then see that it is uniquely determined by the morphism $h$ in $\cP$, which shows that $\eta_P$ is fully faithful on morphisms. This shows that $\eta_P$ is an equivalence of categories. 

Now, given any marked functor $P\colon (\cP,E)\to \cC^\sharp$, let $(\cP,E)\to (\widehat{\cP},\widehat{E})$ be a fibrant replacement of $P$ in $(\Cat^+_{/\cC^\sharp})_{\mathrm{Grfib}}$ with $\widehat{P}\colon (\widehat{\cP},\widehat{E})\to \cC^\sharp$. This gives a commutative square of marked functors over $\cC^\sharp$
\begin{tz}
\node[](1) {$(\cP,E)$}; 
\node[right of=1,xshift=1.1cm](2) {$\int^+_\cC \cT^+_\cC(P)$}; 
\node[below of=1](1') {$(\widehat{\cP},\widehat{E})$};
\node[below of=2](2') {$\int^+_\cC \cT^+_\cC(\widehat{P})$}; 

\draw[->] (1) to (2); 
\draw[->] (1) to node[below,la, sloped]{$\sim$} (1'); 
\draw[->] (2) to node[above,la, sloped,yshift=-2pt]{$\sim$} (2');
\draw[->] (1') to node[below,la]{$\simeq$} (2');
\end{tz}
where the bottom functor $\eta_{\widehat{P}}$ is an equivalence of categories since $\widehat{P}\colon (\widehat{\cP},\widehat{E})\to \cC^\sharp$ is a cart-marked Grothendieck fibration, and the left-hand functor is a weak equivalence by construction. Moreover, the right-hand functor is also a weak equivalence, as the fact that all objects in $(\Cat^+_{/\cC^\sharp})_{\mathrm{Grfib}}$ are cofibrant and all objects in $[\cC^{\op},\Cat]_{\mathrm{proj}}$ are fibrant ensures that both $\cT^+_\cC$ and $\int^+_\cC$ preserve weak equivalences by Ken Brown's lemma \cite[Lemma 1.1.12]{hovey}. Hence by $2$-out-of-$3$, we get that $\eta_P$ is also a weak equivalence, as desired. 

To conclude the proof, by the dual version of \cite[Lemma 3.3]{mehmet} it suffices to show that $\int^+_\cC$ reflects weak equivalences. Let $\alpha\colon F\Rightarrow G$ be a natural transformation in $[\cC^{\op},\Cat]$ such that $\int_\cC^+ \alpha$ is an equivalence of categories; we need to prove that $\alpha_c \colon Fc\to Gc$ is an equivalence of categories for every object $c\in\cC$. 

We first show that $\alpha_c$ is essentially surjective on objects. Given an object $y\in Gc$, consider the object $(c,y)$ of $\int_\cC^+ G$. Since $\int_\cC^+ \alpha$ is essentially surjective on objects, there is an object $(c',x)$ in $\int_\cC^+ F$ and an isomorphism $(f,\varphi)\colon (c',\alpha_{c'}(x))\xrightarrow{\cong} (c,y)$ in $\int_\cC^+ G$, i.e., an isomorphism $f\colon c'\xrightarrow{\cong} c$ in $\cC$ and an isomorphism $\varphi\colon \alpha_{c'}(x)\xrightarrow{\cong} Gf(y)$ in $Fc'$. Then $Ff^{-1}(x)$ is an object of $Fc$, and we have an isomorphism \[
Gf^{-1}(\varphi)\colon \alpha_c (Ff^{-1}(x)) =Gf^{-1}(\alpha_{c'}(x))\to Gf^{-1} (Gf(y))=y. \]

To see that $\alpha_c$ is fully faithful on morphisms, note that for all objects $x,x'\in Fc$, the bijection 
\[ \textstyle (\int^+_\cC F)((c,x),(c,x'))\cong (\int^+_\cC G)((c,\alpha_c (x)),(c,\alpha_c (x')))\]
induced by $\int_\cC^+\alpha$ restricts to a bijection 
\[ (Fc)(x,x')\cong (Gc)(\alpha_c (x),\alpha_c (x')) \]
induced by $\alpha_c$ when we only allow $\id_c$ in the first component. This concludes the proof.
\end{proof}

\begin{rem}
    When $\cC=[0]$ is the terminal category, there are canonical identifications 
    \[ \Cat\cong [[0],\Cat] \quad \text{and} \quad \Cat^+\cong \Cat^+_{/[0]}. \]
    Then the adjunction $\cT^+_{[0]}\dashv \int^+_{[0]}$ can be identified with the adjunction from \cref{rem:isolocadj}
    \begin{tz}
\node[](1) {$\Cat$}; 
\node[right of=1,xshift=1.1cm](2) {$\Cat^+$}; 

\draw[->] ($(1.east)-(0,5pt)$) to node[below,la]{$(-)^\natural$} ($(2.west)-(0,5pt)$);
\draw[->] ($(2.west)+(0,5pt)$) to node[above,la]{$\Loc$} ($(1.east)+(0,5pt)$);

\node[la] at ($(1.east)!0.5!(2.west)$) {$\bot$};
\end{tz}
\end{rem}

We consider the category $\Cat^+$ to be endowed with the model structure $(\Cat^+_{/[0]})_\mathrm{Grfib}$. As a consequence of the above remark, by taking $\cC=[0]$ in \cref{thm:markedGCQE} we obtain the following result. 

\begin{cor} \label{cor:locisoQP}
The adjunction
    \begin{tz}
\node[](1) {$\Cat$}; 
\node[right of=1,xshift=1.1cm](2) {$\Cat^+$}; 

\draw[->] ($(1.east)-(0,5pt)$) to node[below,la]{$(-)^\natural$} ($(2.west)-(0,5pt)$);
\draw[->] ($(2.west)+(0,5pt)$) to node[above,la]{$\Loc$} ($(1.east)+(0,5pt)$);

\node[la] at ($(1.east)!0.5!(2.west)$) {$\bot$};
\end{tz}
is a Quillen equivalence.
\end{cor}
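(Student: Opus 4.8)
The plan is to obtain \cref{cor:locisoQP} as the special case $\cC = [0]$ of \cref{thm:markedGCQE}, so that all the homotopical content is imported wholesale and only the identifications announced in the preceding remark need to be made precise. First I would spell out the two isomorphisms $\Cat \cong [[0],\Cat]$ and $\Cat^+ \cong \Cat^+_{/[0]}$: a functor $[0]^\op \to \Cat$ is exactly the datum of a single category, and since $[0]^\sharp = [0]$ is terminal in $\Cat^+$, slicing over it does nothing. Under these isomorphisms I would check that the adjunction $\cT^+_{[0]} \dashv \int^+_{[0]}$ of \cref{adjunctionmarkedgroth} is carried to the adjunction $\Loc \dashv (-)^\natural$ of \cref{rem:isolocadj}. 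On the right, a morphism of $\int^+_{[0]}\cD$ is simply a morphism $\psi$ of $\cD$, and it is marked precisely when $\psi$ is invertible, so $\int^+_{[0]}\cD = \cD^\natural$. On the left, for a marked category $(\cP,E)$ the comma category $c \downarrow P$ at the unique object $c$ of $[0]$ is isomorphic to $\cP$ together with the marking $E$, so $\cT^+_{[0]}(\cP,E)$, read in $\Cat$ via the identification, is the localization $\cP[E^{-1}] = \Loc(\cP,E)$.

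Next I would identify the model structures. The category $\Cat^+ \cong \Cat^+_{/[0]}$ has been declared to carry $(\Cat^+_{/[0]})_{\mathrm{Grfib}}$, so nothing further is needed on that side. On the other side, the projective model structure on $[[0],\Cat]$ has the level-wise equivalences of categories as weak equivalences and the level-wise isofibrations as fibrations; since there is a single level, these are just the equivalences of categories and the isofibrations, i.e., the canonical model structure on $\Cat$ of \cref{thm:MSCat}. Hence the isomorphism $\Cat \cong [[0],\Cat]$ transports the projective model structure to the canonical one.

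Combining these observations, \cref{thm:markedGCQE} applied with $\cC = [0]$ says exactly that $\Loc \dashv (-)^\natural$ is a Quillen equivalence between the canonical model structure on $\Cat$ and $(\Cat^+_{/[0]})_{\mathrm{Grfib}} = \Cat^+$, which is the assertion. I do not expect a genuine obstacle: the only care required is the bookkeeping of the two identifications and checking that they match the model structures, together with the routine unwinding of $\int^+_{[0]}$ and $\cT^+_{[0]}$ on objects and morphisms that shows the localization $(c \downarrow P)[E_c^{-1}]$ at the unique object reduces to $\cP[E^{-1}]$.
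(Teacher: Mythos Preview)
Your proposal is correct and follows exactly the paper's approach: the corollary is obtained by specializing \cref{thm:markedGCQE} to $\cC=[0]$ via the identifications recorded in the remark immediately preceding it. The paper does not spell out the details you mention, but they are precisely the ones implicit in that remark.
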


\section{Comparison with infinity: discrete case}\label{section:discinfty}

We now want to compare the model structure on $\Cat_{/\cC}$ for discrete fibrations constructed in \cref{thm:discreteMS} to its $\infty$-counterpart: the contravariant model structure on  the category $\sSet_{/N\cC}$ of simplicial sets over the nerve $N\cC$, denoted here by $(\sSet_{/N\cC})_\mathrm{contra}$. This model structure, constructed by Lurie in \cite[Proposition 2.1.4.7]{HTT}, has as cofibrations the monomorphisms of simplicial sets, and as fibrant objects the \emph{right fibrations}---the $\infty$-analogue of discrete fibrations. 

The goal of this section is to study the following square of Quillen adjunctions.
\begin{diagram} \label{QP1}
    \node[](1) {$[\cC^{\op},\Set]_{\mathrm{proj}}$}; 
    \node[below of=1,yshift=-.5cm](2) {$[\cC^\op,\sSet]_\mathrm{proj}$}; 
    \node[right of=1,xshift=2.6cm](3) {$(\Cat_{/\cC})_{\mathrm{discfib}}$}; 
    \node[below of=3,yshift=-.5cm](4) {$(\sSet_{/N\cC})_{\mathrm{contra}}$};

    \draw[->] ($(1.east)-(0,5pt)$) to node[below,la]{$\int_\cC$} ($(3.west)-(0,5pt)$);
\draw[->] ($(3.west)+(0,5pt)$) to node[above,la]{$\cT_\cC$} ($(1.east)+(0,5pt)$);
\node[la] at ($(1.east)!0.5!(3.west)$) {$\bot$};
\draw[->] ($(2.east)-(0,5pt)$) to node[below,la]{$\mathrm{Un}_\cC$} ($(4.west)-(0,5pt)$);
\draw[->] ($(4.west)+(0,5pt)$) to node[above,la]{$\mathrm{St}_\cC$} ($(2.east)+(0,5pt)$);
\node[la] at ($(2.east)!0.5!(4.west)$) {$\bot$};

\draw[->] ($(3.south)+(5pt,0)$) to node[right,la]{$N$} ($(4.north)+(5pt,0)$); 
\draw[->] ($(4.north)-(5pt,0)$) to node[left,la]{$c$} ($(3.south)-(5pt,0)$); 
\node[la] at ($(3.south)!0.5!(4.north)$) {\rotatebox{90}{$\bot$}};
\draw[->] ($(1.south)+(5pt,0)$) to node[right,la]{$\iota_*$} ($(2.north)+(5pt,0)$); 
\draw[->] ($(2.north)-(5pt,0)$) to node[left,la]{$(\pi_0)_*$} ($(1.south)-(5pt,0)$); 
\node[la] at ($(1.south)!0.5!(2.north)$) {\rotatebox{90}{$\bot$}};
\end{diagram}
The adjunction at the top, presented in \cref{adjclassicalGC}, has the classical Grothendieck construction as the right adjoint, and is a Quillen equivalence by \cref{thm:discreteQuillenequiv}. The bottom adjunction is referred to as \emph{straightening-unstraightening}, and \cite[Theorem 2.2.1.2]{HTT} shows it is a Quillen equivalence when considering the projective model structure on functors into $\sSet$ endowed with the Kan-Quillen model structure. For the left side, recall that the adjunction
\begin{tz}
\node[](1) {$\Set$}; 
\node[right of=1,xshift=.9cm](2) {$\sSet$}; 

\draw[->] ($(1.east)-(0,5pt)$) to node[below,la]{$\iota$} ($(2.west)-(0,5pt)$);
\draw[->] ($(2.west)+(0,5pt)$) to node[above,la]{$\pi_0$} ($(1.east)+(0,5pt)$);

\node[la] at ($(1.east)!0.5!(2.west)$) {$\bot$};
\end{tz}
is a \emph{Quillen reflection} (i.e., a Quillen pair such that the derived counit is a level-wise weak equivalence), where $\iota$ denotes the canonical inclusion. Hence, it induces by post-composition a Quillen reflection that features as the left-hand adjunction in \eqref{QP1}. Finally, for the right side, note that the adjunction 
\begin{tz}
\node[](1) {$\Cat$}; 
\node[right of=1,xshift=1cm](2) {$\sSet$}; 

\draw[->] ($(1.east)-(0,5pt)$) to node[below,la]{$N$} ($(2.west)-(0,5pt)$);
\draw[->] ($(2.west)+(0,5pt)$) to node[above,la]{$c$} ($(1.east)+(0,5pt)$);

\node[la] at ($(1.east)!0.5!(2.west)$) {$\bot$};
\end{tz}
induces an adjunction between slices over $\cC$ and $N\cC$; this gives the right-hand adjunction in \eqref{QP1}. We show in \cref{discQuillenrefl} that this is a Quillen reflection, and in \cref{prop:leftQuillencommute1,prop:rightQuillencommute1} that the squares of left and right adjoints in \eqref{QP1} commute up to a level-wise weak equivalence.

We begin by studying the adjunction $c\dashv N$.

\begin{prop}\label{discQuillenrefl}
The adjunction 
\begin{tz}
\node[](1) {$(\Cat_{/\cC})_\mathrm{discfib}$}; 
\node[right of=1,xshift=2.6cm](2) {$(\sSet_{/N\cC})_\mathrm{contra}$}; 

\draw[->] ($(1.east)-(0,5pt)$) to node[below,la]{$N$} ($(2.west)-(0,5pt)$);
\draw[->] ($(2.west)+(0,5pt)$) to node[above,la]{$c$} ($(1.east)+(0,5pt)$);

\node[la] at ($(1.east)!0.5!(2.west)$) {$\bot$};
\end{tz}
is a Quillen reflection. 
\end{prop}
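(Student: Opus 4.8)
The plan is to check separately that the pair $(c,N)$ is a Quillen adjunction and that its derived counit is a (level-wise, hence ordinary) weak equivalence; together these are exactly the two requirements for a Quillen reflection.

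For the Quillen adjunction, I would show that the left adjoint $c\colon (\sSet_{/N\cC})_{\mathrm{contra}}\to (\Cat_{/\cC})_{\mathrm{discfib}}$ is left Quillen. It preserves cofibrations for free, since every functor over $\cC$ is a cofibration in $(\Cat_{/\cC})_{\mathrm{discfib}}$, so the real point is that $c$ preserves trivial cofibrations. As $(\sSet_{/N\cC})_{\mathrm{contra}}$ is combinatorial, it suffices to treat a set $J$ of generating trivial cofibrations; and since $c$ preserves colimits while the class $\an$ of anodyne extensions of $(\Cat_{/\cC})_{\mathrm{discfib}}$ is weakly saturated and consists of weak equivalences, it is enough to show $c(J)\subseteq \an$ (these maps are also cofibrations, so they will then be trivial cofibrations). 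On the ``expected'' generators, the right-horn inclusions $(\Lambda^n_i\hookrightarrow \Delta^n)\to N\cC$ with $0<i\le n$, this is a direct check: using $c(\Delta^n)=[n]$, one verifies that $c(\Lambda^n_i)\to [n]$ is a final functor over $\cC$, so that it lies in $\an$. Concretely, for $0<i<n$ one has $c(\Lambda^n_i)\cong [n]$ (the relation coming from the missing $2$-face, if any, is forced by the others), while for $i=n$ the object $n$ is terminal in $c(\Lambda^n_n)$, hence terminal in each comma category $b\downarrow (c(\Lambda^n_n)\to [n])$, making the latter nonempty and connected.

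The derived counit is the easy half. The counit of $c\dashv N$ is the natural transformation $cN\Rightarrow \mathrm{id}_{\Cat_{/\cC}}$ induced by the counit of $c\dashv N$ on $\Cat$, which is an isomorphism because $N\colon \Cat\to \sSet$ is fully faithful. To see that this already computes the derived counit, let $P\colon \cP\to \cC$ be fibrant, i.e.\ a discrete fibration. Then $NP\colon N\cP\to N\cC$ is the nerve of a discrete fibration, hence a right fibration, hence fibrant in $(\sSet_{/N\cC})_{\mathrm{contra}}$; and it is cofibrant because every object of that model structure is. So no (co)fibrant replacements intervene, and the derived counit at $P$ is the isomorphism $cN\cP\xrightarrow{\cong}\cP$ over $\cC$, which is in particular a weak equivalence. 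This proves that $c\dashv N$ is a Quillen reflection.

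I expect the main obstacle to be completing the second step, i.e.\ showing that $c$ sends all of $J$ into the weak equivalences of $(\Cat_{/\cC})_{\mathrm{discfib}}$. Lurie's contravariant model structure has generating trivial cofibrations beyond the right-anodyne maps: already over $N[0]=\Delta^0$ it is the Kan--Quillen model structure, whose trivial cofibrations are not generated by the right horns alone. A generator of this remaining kind need not land in $\an$, so the cleanest route is to compute $\cT_\cC$ of its image over $\cC$ and to observe directly (a short computation with comma categories) that it is a level-wise isomorphism of presheaves of sets; one then concludes using that $\cT_\cC$ reflects weak equivalences, which holds by \cref{thm:discreteQuillenequiv} together with the facts that every object of $(\Cat_{/\cC})_{\mathrm{discfib}}$ is cofibrant and every object of $[\cC^{\op},\Set]_{\mathrm{proj}}$ is fibrant. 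One could instead observe that $\cT_\cC\circ c$ agrees, up to a level-wise weak equivalence, with $(\pi_0)_*\circ \mathrm{St}_\cC$ (which preserves weak equivalences); but that comparison is essentially the content of the later commutativity statement \cref{prop:leftQuillencommute1}, so I would prefer the self-contained argument above.
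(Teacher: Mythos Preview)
Your treatment of the derived counit is correct and matches the paper's. Your analysis of the right horn inclusions is also fine; the finality argument is a valid alternative to the paper's direct identification of $c(\Lambda^t[n]\hookrightarrow\Delta[n])$ with explicit pushouts of the generators of $\an$.

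The gap is exactly where you flag it: the handling of generating trivial cofibrations that are not right-anodyne. You propose to ``compute $\cT_\cC$ of its image over $\cC$ and observe directly (a short computation with comma categories) that it is a level-wise isomorphism.'' But you have no explicit description of these extra generators---their existence is guaranteed by the small object argument, not by an explicit list---so there is no concrete computation to perform. Your fallback via $(\pi_0)_*\circ\mathrm{St}_\cC$ would indeed work, but as you say it relies on \cref{prop:leftQuillencommute1}, which is proved later and whose proof in turn uses the present proposition (through the Quillen pair structure). So neither route closes the argument as written.

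The paper avoids this difficulty altogether. Instead of showing that $c$ preserves all trivial cofibrations, it invokes the criterion \cite[Proposition~E.2.14]{JoyalVolumeII}: an adjunction between model categories is Quillen provided the left adjoint preserves cofibrations and the right adjoint preserves \emph{fibrations between fibrant objects}. In both model structures at hand, the fibrations between fibrant objects are characterized by a right lifting property against a class of anodyne extensions: on the simplicial side these are generated by the right horn inclusions (see \cite[Lemma~4.3, \S4.4, Theorem~2.17]{kim}), and on the categorical side by the generators of $\an$. So it suffices to check that $c$ sends right horn inclusions into $\an$---precisely the computation you already carried out---and the troublesome extra generators never enter the picture.
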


\begin{proof}
    First note that $c$ trivially preserves cofibrations, as all functors are cofibrations in $(\Cat_{/\cC})_\mathrm{discfib}$. By \cite[Proposition E.2.14]{JoyalVolumeII}, it remains to show that $N$ preserves fibrations between fibrant objects. However, in both model structures these are determined by the right lifting property with respect to a class of anodyne extensions, so it suffices to prove that $c$ preserves anodyne extensions, which we now do.
    
    The anodyne extensions of the model structure $(\sSet_{/N\cC})_\mathrm{contra}$ are cofibrantly generated by the morphisms $\Lambda^t[n]\to \Delta[n]\to N\cC$, for $n\geq 1$ and $0<t\leq n$ (see \cite[Lemma 4.3 and \S 4.4]{kim} for a justification of this claim, and \cite[Theorem 2.17]{kim} for the fact that these determine the fibrations between fibrant objects). When we apply $c$, we see that most of these generating morphisms are mapped to the identity, except for the following:
    \begin{enumerate}[leftmargin=1.1cm]
        \item the map for $n=1$ and $t=1$ is sent to the anodyne extension $[0]\xrightarrow{1} [1]\to \cC$, 
        \item the map for $n=2$ and $t=2$ is sent to a composite of pushouts of the anodyne extensions $[0]\xrightarrow{1} [1]\to \cC$ and $[1]\sqcup_{[0]}[1]\to [1]\to \cC$,
        \item the map for $n=3$ and $t=3$ is sent to a pushout of the anodyne extension $[1]\sqcup_{[0]}[1]\to [1]\to \cC$.
    \end{enumerate}
    In particular, all of these maps are anodyne extensions in $(\Cat_{/\cC})_\mathrm{discfib}$, and so $c$ is left Quillen.

    Moreover, since $cN\cong \id_{\Cat}$ we have that the component of the (derived) counit at a discrete fibration $P\colon \cP\to \cC$ is an isomorphism, showing that $c\dashv N$ is a Quillen reflection.
\end{proof}

The commutativity of the square \eqref{QP1} is quite hard to unpack directly, due to the complexity of the straightening-unstraightening adjunction $\mathrm{St}_\cC\dashv \mathrm{Un}_\cC$. Hence, we first work with a simplified construction 
\begin{tz}
\node[](1) {$[\cC^{\op},\sSet]_\mathrm{proj}$}; 
\node[right of=1,xshift=2.6cm](2) {$(\sSet_{/N\cC})_\mathrm{contra}$}; 

\draw[->] ($(1.east)-(0,5pt)$) to node[below,la]{$N_{(-)}(\cC)$} ($(2.west)-(0,5pt)$);
\draw[->] ($(2.west)+(0,5pt)$) to node[above,la]{$\cF_{(-)}(\cC)$} ($(1.east)+(0,5pt)$);

\node[la] at ($(1.east)!0.5!(2.west)$) {$\bot$};
\end{tz}
given by the \emph{relative nerve} and its left adjoint introduced in \cite[Definition 3.2.5.2 and Remark 3.2.5.5]{HTT}. By \cite[Proposition 3.2.5.18(1)]{HTT}, this provides another Quillen equivalence between the projective and contravariant model structures. Moreover, since the left adjoint $\cF_{(-)}(\cC)$ is related by a level-wise weak equivalence to the straightening functor $\mathrm{St}_{\cC}$ by \cite[Lemma 3.2.5.17(1)]{HTT}, it is a suitable replacement to study the commutativity of the square \eqref{QP1} of adjoint functors.

\begin{prop} \label{prop:leftadjcommute1}
The following square of left adjoints commutes up to isomorphism.
    \begin{tz}
\node[](1) {$\sSet_{/N\cC}$}; 
\node[right of=1,xshift=1.8cm](2) {$[\cC^{\op},\sSet]$}; 
\node[below of=1](1') {$\Cat_{/\cC}$};
\node[below of=2](2') {$[\cC^{\op},\Set]$}; 

\draw[->] (1) to node[above,la]{$\cF_{(-)}(\cC)$} (2); 
\draw[->] (1) to node[left,la]{$c$} (1'); 
\draw[->] (2) to node[right,la]{$(\pi_0)_*$} (2');
\draw[->] (1') to node[below,la]{$\cT_\cC$} (2');
\end{tz}
\end{prop}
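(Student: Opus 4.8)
The plan is to reduce this to formal adjunction calculus together with one genuinely combinatorial input about the relative nerve. First observe that both composites in the square are left adjoints: $\cT_\cC\circ c$ is the composite of $\cT_\cC$ (the left adjoint of \cref{adjclassicalGC}) with $c\colon\sSet_{/N\cC}\to\Cat_{/\cC}$, the latter being left adjoint to the functor $\Cat_{/\cC}\to\sSet_{/N\cC}$ sending $(\cP\to\cC)$ to $(N\cP\to N\cC)$ — i.e.\ the slice version of the adjunction $c\dashv N$ appearing in \cref{discQuillenrefl}, using $cN\cC\cong\cC$ — while $(\pi_0)_*\circ\cF_{(-)}(\cC)$ is the composite of $\cF_{(-)}(\cC)$ with post-composition by $\pi_0$, which is left adjoint to post-composition by the inclusion $\iota\colon\Set\hookrightarrow\sSet$. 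Hence, by the Yoneda lemma, it is enough to produce for every $X\to N\cC$ in $\sSet_{/N\cC}$ and every $H\in[\cC^{\op},\Set]$ a bijection between $\mathrm{Hom}_{[\cC^{\op},\Set]}\big((\pi_0)_*\cF_X(\cC),H\big)$ and $\mathrm{Hom}_{[\cC^{\op},\Set]}\big(\cT_\cC(cX),H\big)$ that is natural in both $X$ and $H$.

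The combinatorial input is the following lemma: \emph{the relative nerve of a $\Set$-valued functor is the ordinary nerve of its category of elements}. Concretely, for $H\in[\cC^{\op},\Set]$ there is an isomorphism $N_{\iota_* H}(\cC)\cong N\big(\int_\cC H\big)$ over $N\cC$, natural in $H$, where $\iota_* H\in[\cC^{\op},\sSet]$ denotes $H$ regarded as valued in discrete simplicial sets. This is the natural extension of the fact that the relative nerve generalizes the ordinary nerve (\cite[Remark 3.2.5.5]{HTT}): unwinding \cite[Definition 3.2.5.2]{HTT}, an $n$-simplex of $N_{\iota_* H}(\cC)$ over $\sigma\colon[n]\to\cC$ is a family of elements indexed by the nonempty subsets $J\subseteq[n]$, which the compatibility conditions force to be determined by the element lying over the distinguished vertex of $[n]$; on the other side, an $n$-simplex of $N\big(\int_\cC H\big)$ over $\sigma$ is a lift of $\sigma$ along $\int_\cC H\to\cC$, which — this being a discrete fibration by \cref{grothisdiscfib} — encodes exactly the same data. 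One then checks that this identification is compatible with the simplicial structure and with the projections to $N\cC$, and is natural in $H$.

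Granting the lemma, the desired natural bijection is obtained by composing adjunction isomorphisms:
\begin{multline*}
\mathrm{Hom}_{[\cC^{\op},\Set]}\big((\pi_0)_*\cF_X(\cC),H\big)\;\cong\;\mathrm{Hom}_{[\cC^{\op},\sSet]}\big(\cF_X(\cC),\iota_* H\big)\;\cong\;\mathrm{Hom}_{\sSet_{/N\cC}}\big(X,N_{\iota_* H}(\cC)\big)\\
\cong\;\mathrm{Hom}_{\sSet_{/N\cC}}\big(X,N({\textstyle\int_\cC}H)\big)\;\cong\;\mathrm{Hom}_{\Cat_{/\cC}}\big(cX,{\textstyle\int_\cC}H\big)\;\cong\;\mathrm{Hom}_{[\cC^{\op},\Set]}\big(\cT_\cC(cX),H\big),
\end{multline*}
using, in order, $(\pi_0)_*\dashv\iota_*$, the relative-nerve adjunction $\cF_{(-)}(\cC)\dashv N_{(-)}(\cC)$, the lemma, the slice adjunction $c\dashv N$, and $\cT_\cC\dashv\int_\cC$ of \cref{adjclassicalGC}. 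Each isomorphism is natural in $X$ and in $H$, so the Yoneda lemma yields the asserted natural isomorphism $(\pi_0)_*\circ\cF_{(-)}(\cC)\cong\cT_\cC\circ c$.

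The only step that is not pure formalism is the lemma, and the main care it demands is matching HTT's indexing conventions for the relative nerve — in particular, which vertex of a simplex is distinguished, and hence whether $\cF_{(-)}(\cC)$ of a simplex $\sigma\colon\Delta[n]\to N\cC$ remembers $\sigma(0)$ or $\sigma(n)$ — with the convention under which $\int_\cC H\to\cC$ is a discrete fibration. As a cross-check (and an alternative route avoiding the lemma), one can evaluate both colimit-preserving composites directly on the representables $\sigma\colon\Delta[n]\to N\cC$: the left composite sends $\sigma$ to $\cT_\cC(\sigma\colon[n]\to\cC)$, whose value at $c\in\cC$ is $\pi_0(c\downarrow\sigma)=\colim_{[n]}\cC(c,\sigma(-))\cong\cC(c,\sigma(n))$ since the inclusion of the terminal object $n\hookrightarrow[n]$ is cofinal; a parallel unwinding of $\cF_{\Delta[n],\sigma}(\cC)$, together with $\pi_0\Delta[n]\cong\ast$, shows the right composite sends $\sigma$ to the same representable $\cC(-,\sigma(n))$, naturally in $\sigma$. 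As $\sSet_{/N\cC}$ is generated under colimits by these representables, this again gives the isomorphism.
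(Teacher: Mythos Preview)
Your main argument is correct and takes a genuinely different route from the paper's. The paper evaluates both colimit-preserving composites on the representables $\sigma\colon\Delta[n]\to N\cC$: on the left it gets $\pi_0((-)\downarrow\sigma)$ immediately, while on the right it uses naturality of the relative-nerve construction in the base category (via $\sigma_!$) to reduce to $\cC=[n]$, invokes \cite[Example~3.2.5.6]{HTT} to identify $\cF_{\id_{\Delta[n]}}([n])\cong N((-)\downarrow[n])$, and then proves a standalone technical result (\cref{lem:LKE}) computing the left Kan extension $\sigma_!((-)\downarrow[n])\cong(-)\downarrow\sigma$. You instead compare the \emph{right} adjoints, so that everything reduces to the single combinatorial identity $N_{\iota_* H}(\cC)\cong N(\int_\cC H)$ over $N\cC$; this is cleaner for the present proposition and sidesteps the Kan-extension calculation entirely. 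The trade-off is that the paper's route is deliberately engineered so that \cref{lem:LKE} can be reused verbatim in the marked analogue (\cref{prop:leftadjcommute}), whereas your right-adjoint comparison would require formulating and proving a marked version of your lemma, relating $N^+_{(-)}(\cC)$ applied to $N^+((-)^\natural)\circ H$ with $N^+\int_\cC^+ H$ for $H\colon\cC^{\op}\to\Cat$.

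Your closing ``cross-check'' paragraph is weaker than the main argument. The identification $\pi_0(c\downarrow\sigma)\cong\cC(c,\sigma(n))$ is fine, but the phrase ``a parallel unwinding of $\cF_{\Delta[n],\sigma}(\cC)$, together with $\pi_0\Delta[n]\cong\ast$'' does not actually carry out a computation, and the hint about $\pi_0\Delta[n]$ is not obviously the relevant ingredient. In fact the quickest way to see $(\pi_0)_*\cF_\sigma(\cC)\cong\cC(-,\sigma(n))$ is to Yoneda against $H$ and reapply your own lemma --- so this is not really an independent route. This does no harm, since your principal argument stands on its own.
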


\begin{proof}
    Let us denote by $\int_{\Delta} N\cC$ the category of elements of the simplicial set $N\cC$. Note that $\sSet_{/N\cC}$ is a presheaf category, as there is an isomorphism $\sSet_{/N\cC}\cong \Set^{(\int_{\Delta} N\cC)^{\op}}$. Hence, as every functor in the above square is a left adjoint, it is enough to show that their values agree on all representables $\Delta[n]\to N\cC$.

    A representable $\sigma\colon \Delta[n]\to N\cC$ is sent by $c$ to the corresponding functor $\sigma\colon [n]\to \cC$, which is in turn sent by $\cT_\cC$ to the functor $\pi_0((-)\downarrow \sigma)\colon \cC^{\op}\to \Set$. On the other hand, we can express the action of $(\pi_0)_*\cF_{(-)}(\cC)$ on $\sigma$ as the composite of the left vertical map and the two bottom horizontal maps in the diagram below.
\begin{tz}
\node[](1) {$\sSet_{/\Delta[n]}$}; 
\node[right of=1,xshift=2cm](2) {$[[n]^{\op},\sSet]$}; 
\node[right of=2,xshift=2cm](3) {$[[n]^{\op},\Cat]$}; 
\node[below of=1](1') {$\sSet_{/N\cC}$};
\node[below of=2](2') {$[\cC^{\op},\sSet]$}; 
\node[below of=3](3') {$[\cC^{\op},\Set]$};

\draw[->] (1) to node[above,la]{$\cF_{(-)}([n])$} (2); 
\draw[->] (2) to node[above,la]{$(\pi_0)_*$} (3); 
\draw[->] (1) to node[left,la]{$\sigma_!$} (1'); 
\draw[->] (2) to node[right,la]{$\sigma_!$} (2');
\draw[->] (3) to node[right,la]{$\sigma_!$} (3');
\draw[->] (1') to node[below,la]{$\cF_{(-)}(\cC)$} (2');
\draw[->] (2') to node[below,la]{$(\pi_0)_*$} (3');
\end{tz}
Now, the left-hand square commutes up to isomorphism by \cite[Remark 3.2.5.8]{HTT}, and the right-hand square commutes up to isomorphism as it is straightforward to check that the corresponding square of right adjoints commutes. Hence, this allows for the following rewriting  \[ (\pi_0)_*\cF_{\sigma}(\cC)\cong \sigma_!(\pi_0)_*\cF_{\id_{\Delta[n]}}([n]). \]

By \cite[Example 3.2.5.6]{HTT}, there is an isomorphism $\cF_{\id_{\Delta[n]}}([n])\cong N((-)\downarrow [n])$ and so we have
\[ (\pi_0)_*\cF_{\id_{\Delta[n]}}([n])\cong \pi_0 N((-)\downarrow [n])\cong \pi_0((-)\downarrow [n]). \]  Moreover, since post-composing with the connected component functor commutes with left Kan extensions, we get that 
\[ \sigma_! (\pi_0((-)\downarrow [n]))\cong \pi_0(\sigma_!((-)\downarrow [n])). \] To obtain the desired claim, it then suffices to compute the left Kan extension along $\sigma\colon [n]\to \cC$ and show that 
\[ \sigma_! ((-)\downarrow [n])\cong (-)\downarrow \sigma \]
which is the content of the independent technical \cref{lem:LKE} below.
\end{proof}

\begin{lem} \label{lem:LKE}
   The left Kan extension of  $[-]\downarrow [n]\colon [n]^{\op}\to \Cat$ along $\sigma\colon [n]\to\cC$ is given by $(-)\downarrow \sigma\colon \cC^{\op}\to \Cat$.
\end{lem}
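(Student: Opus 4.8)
The plan is to verify the universal property of the left Kan extension directly, rather than computing it via the pointwise colimit formula (which would require analyzing comma categories $\sigma \downarrow c$ and is messier). Concretely, I would exhibit a natural transformation $\tau \colon [-]\downarrow[n] \Rightarrow ((-)\downarrow\sigma)\circ\sigma^{\op}$ of functors $[n]^{\op}\to\Cat$, and then show it is initial among all such transformations into functors of the form $G\circ\sigma^{\op}$; equivalently, I would show that for every functor $H\colon\cC^{\op}\to\Cat$, precomposition with $\tau$ induces a bijection
\[ \mathrm{Nat}\big((-)\downarrow\sigma,\ H\big)\ \xrightarrow{\ \cong\ }\ \mathrm{Nat}\big([-]\downarrow[n],\ H\circ\sigma^{\op}\big). \]

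First I would pin down $\tau$. For an object $i\in[n]$, the category $i\downarrow[n]$ has objects the morphisms $i\to j$ in $[n]$, i.e.\ the integers $j$ with $i\le j\le n$; this includes into $\sigma(i)\downarrow\sigma$ by sending $(i\to j)$ to $(\sigma(i)\xrightarrow{\sigma(i\to j)}\sigma(j))$, and this is visibly functorial in $i\in[n]^{\op}$ (precomposition on both sides). Note that $i\downarrow[n]$ has a terminal object, namely the identity $i\to i$ sitting at the bottom — wait, in $i\downarrow[n]$ with morphisms induced by postcomposition in $[n]$, the object $\mathrm{id}_i$ is \emph{initial}, and $(i\to n)$ is \emph{terminal}. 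I would use that $\mathrm{id}_i$ is initial: this means $i\downarrow[n]$ is the ``free'' object one needs, and a natural transformation out of $i\downarrow[n]$ is determined by its value at $\mathrm{id}_i$. That is the conceptual heart: $\tau$ sends $\mathrm{id}_i\in i\downarrow[n]$ to $(\mathrm{id}_{\sigma(i)})\in\sigma(i)\downarrow\sigma$, which is itself the initial object of $\sigma(i)\downarrow\sigma$.

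Then the bijection above is checked by hand. Given a natural transformation $\Phi\colon [-]\downarrow[n]\Rightarrow H\circ\sigma^{\op}$, I must produce a unique $\widetilde\Phi\colon (-)\downarrow\sigma\Rightarrow H$ with $\widetilde\Phi\cdot\tau = \Phi$. For an object $c\in\cC$, an object of $c\downarrow\sigma$ is a pair $(i, f\colon c\to\sigma(i))$; I would define $\widetilde\Phi_c(i,f) := H(f)\big(\Phi_i(\mathrm{id}_i)\big)$, using the morphism $f\colon c\to\sigma(i)$ in $\cC$ and naturality of $\Phi$/$\widetilde\Phi$. Uniqueness: any compatible $\widetilde\Phi$ must satisfy $\widetilde\Phi_{\sigma(i)}(i,\mathrm{id})=\Phi_i(\mathrm{id}_i)$ (from $\widetilde\Phi\cdot\tau=\Phi$ at the initial object), and then naturality of $\widetilde\Phi$ along the morphism $(f\colon c\to\sigma(i))$ forces the formula above; so the formula is the only candidate. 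Existence: one checks $\widetilde\Phi$ so defined is (a) well-defined on morphisms of $c\downarrow\sigma$ — here one uses that a morphism $(i,f)\to(j,g)$ in $c\downarrow\sigma$ is a morphism $i\to j$ in $[n]$ with $g = \sigma(i\to j)\circ f$, together with naturality of $\Phi$ in $[n]^{\op}$ applied to $i\to j$; and (b) natural in $c\in\cC^{\op}$, which is a direct functoriality computation with $H$.

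The main obstacle I anticipate is purely bookkeeping: keeping the two variances straight (the functors are contravariant in $[n]$ and in $\cC$, while comma categories introduce their own direction of morphisms) and correctly identifying which object of $i\downarrow[n]$ and of $\sigma(i)\downarrow\sigma$ is initial. Once the correct claim ``$\mathrm{id}_i$ is the initial object of $i\downarrow[n]$, and $\tau$ preserves initial objects'' is isolated, everything is forced and the verifications are routine. No serious geometric or combinatorial input is needed — it is really the Yoneda-style observation that a diagram out of a category with an initial object is rigid. (An alternative, if one prefers, is to invoke the pointwise formula $(\sigma_!X)(c)=\colim_{(\sigma\downarrow c)^{\op}} X$ and show $\sigma\downarrow c$ has the homotopy type forcing the colimit to collapse to $c\downarrow\sigma$, but the direct universal-property argument above avoids analyzing $\sigma\downarrow c$ altogether and is cleaner to write.)
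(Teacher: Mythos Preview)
Your proposal is correct and takes a genuinely different route from the paper. The paper evaluates the pointwise left Kan extension via the explicit coequalizer formula and then exhibits mutually inverse functors between that coequalizer and $c\downarrow\sigma$ for each $c\in\cC$. You instead verify the universal property directly, constructing the bijection $\mathrm{Nat}((-)\downarrow\sigma,H)\cong\mathrm{Nat}([-]\downarrow[n],H\sigma^{\op})$ by hand. Your approach avoids any colimit manipulation, at the cost of having to check functoriality and naturality of the recovered $\widetilde\Phi$ (which the paper's explicit inverse handles in one stroke); conversely, the paper's approach requires tracking the coequalizer relation carefully but makes the isomorphism completely concrete. One caution on your exposition: the slogan ``a natural transformation out of $i\downarrow[n]$ is determined by its value at $\mathrm{id}_i$'' is not literally true---a \emph{functor} out of a category with initial object is not determined by its value there. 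What actually drives your argument is that every object $(i,f)$ of $c\downarrow\sigma$ arises as the restriction along $f\colon c\to\sigma(i)$ of $\tau_i(\mathrm{id}_i)$, and every morphism similarly arises from some $\tau_i(\mathrm{id}_i\to(i\to j))$; this is what forces the formula for $\widetilde\Phi_c$ by naturality in $c$, and naturality of $\Phi$ in $[n]^{\op}$ is what lets you recover all of $\Phi$ (including its action on non-initial morphisms) from the resulting $\widetilde\Phi$. Your actual construction uses exactly this and goes through.
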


\begin{proof}
We start by using the formula for pointwise left Kan extensions (see for instance \cite[Theorem 6.2.1]{emilycontext}): given $c\in \cC$, we can rewrite
\begin{align*}
    \sigma_!((-)\downarrow [n]))(c) &\cong \mathrm{colim}(c\downarrow \sigma\to [n]^{\op}\xrightarrow{(-)\downarrow [n]} \Cat) \\
    &\cong \textstyle\mathrm{coeq}\left(\coprod_{0\leq i\leq j\leq n} \cC(c,\sigma(i))\times (j\downarrow [n])\rightrightarrows \coprod_{0\leq i\leq n} \cC(c,\sigma(i))\times (i\downarrow [n])\right).
\end{align*} 
So there is a canonical functor 
\[ \sigma_!((-)\downarrow [n])(c)\to c\downarrow \sigma \]
induced, for each $0\leq i\leq n$, by the functor $\cC(c,\sigma(i))\times (i\downarrow [n])\to c\downarrow \sigma$ that sends 
\begin{numbered}[leftmargin=1.1cm]
    \item an object $(c\xrightarrow{f}\sigma(i),i\xrightarrow{g} j)$ in $\cC(c,\sigma(i))\times (i\downarrow [n])$ to the composite morphism $c\xrightarrow{f} \sigma(i)\xrightarrow{\sigma(g)} \sigma(j)$ in $c\downarrow \sigma$, 
    \item a morphism $(f,h)\colon (f,g)\to (f,hg)$ in $\cC(c,\sigma(i))\times (i\downarrow [n])$ with $h\colon j\to k$ in $[n]$ to the morphism $h\colon \sigma(g)\circ f\to \sigma(hg)\circ f$ in $c\downarrow \sigma$.
\end{numbered}
These functors coequalize the two parallel functors in the coequalizer diagram above and hence induce a functor from $\sigma_!((-)\downarrow [n])(c)$ as desired. Indeed, given an object $(c\xrightarrow{f} \sigma(i), j\xrightarrow{h} k)$ in $\cC(c,\sigma(i))\times (j\downarrow [n])$ and the unique map $i\xrightarrow{g} j$ in $[n]$, the two parallel functors send the tuple $(f,h)$ to 
\[ (c\xrightarrow{f} \sigma(i)\xrightarrow{\sigma(g)} \sigma(j), j\xrightarrow{h} k) \quad \text{and} \quad (c\xrightarrow{f} \sigma(i), i\xrightarrow{g}j\xrightarrow{h} k), \]
both of which are mapped to the same object of $c\downarrow \sigma$.

We now construct a functor $c\downarrow\sigma \to \sigma_!((-)\downarrow [n])(c)$ in the other direction which sends 
\begin{numbered}[leftmargin=1.1cm]
    \item an object $c\xrightarrow{f}\sigma(i)$ in $c\downarrow \sigma$ to the object of $\sigma_!((-)\downarrow [n])(c)$ represented by $(c\xrightarrow{f}\sigma(i), \id_i)$, 
    \item a morphism $g\colon f\to \sigma(g)\circ f$ in $c\downarrow \sigma$ to the morphism of $\sigma_!((-)\downarrow [n])(c)$ represented by $(f,g)\colon (f,\id_i)\to (f,g)$, 
\end{numbered}
where we use that $(f,g)\sim (\sigma(g)\circ f, \id_j)$ as objects of the coequalizer. Using this identification, both functors compose to the identity and so we get an isomorphism
\[ \sigma_!((-)\downarrow [n])(c)\cong c\downarrow \sigma \]
for each $c\in \cC$, yielding the desired isomorphism
\[ \sigma_!((-)\downarrow [n])\cong (-)\downarrow \sigma. \qedhere \]
\end{proof}

We can now use \cref{prop:leftadjcommute1} to obtain the commutativity of the square of left adjoints of interest.

\begin{prop} \label{prop:leftQuillencommute1}
The following square of left Quillen functors commutes up to a level-wise weak equivalence.
    \begin{tz}
\node[](1) {$(\sSet_{/N\cC})_\mathrm{contra}$}; 
\node[right of=1,xshift=2.6cm](2) {$[\cC^{\op},\sSet]_\mathrm{proj}$}; 
\node[below of=1](1') {$(\Cat_{/\cC})_\mathrm{discfib}$};
\node[below of=2](2') {$[\cC^{\op},\Set]_\mathrm{proj}$}; 

\draw[->] (1) to node[above,la]{$\mathrm{St}_\cC$} node[below,la]{$\simeq_{QE}$} (2); 
\draw[->] (1) to node[left,la]{$c$} (1'); 
\draw[->] (2) to node[right,la]{$(\pi_0)_*$} (2');
\draw[->] (1') to node[below,la]{$\cT_\cC$} node[above,la]{$\simeq_{QE}$} (2');
\end{tz}
\end{prop}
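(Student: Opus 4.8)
The plan is to reduce the statement to the strict commutativity already established in \cref{prop:leftadjcommute1}, replacing the relative-nerve left adjoint $\cF_{(-)}(\cC)$ by the straightening functor $\mathrm{St}_\cC$ at the cost of weakening ``isomorphism'' to ``level-wise weak equivalence''. Before that, I would note that there is nothing more to prove about the Quillen-equivalence annotations: $\mathrm{St}_\cC$ is a left Quillen equivalence by \cite[Theorem 2.2.1.2]{HTT} and $\cT_\cC$ is one by \cref{thm:discreteQuillenequiv}, while $c$ is left Quillen by \cref{discQuillenrefl} and $(\pi_0)_*$ is left Quillen as the post-composition of $\pi_0\colon\sSet\to\Set$ (see the discussion preceding \eqref{QP1}). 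Hence the content of the proposition is exactly the commutativity up to a level-wise weak equivalence.

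The first step is to recall from \cite[Lemma 3.2.5.17(1)]{HTT} that $\mathrm{St}_\cC$ and $\cF_{(-)}(\cC)$, as functors $\sSet_{/N\cC}\to[\cC^\op,\sSet]$, are connected by a natural transformation which is a level-wise weak equivalence; moreover its source and target are projectively cofibrant, since both functors are left Quillen and every object of $(\sSet_{/N\cC})_\mathrm{contra}$ is cofibrant. The second step is to apply the left Quillen functor $(\pi_0)_*$ to this comparison: by Ken Brown's lemma it sends our natural weak equivalence between cofibrant objects to a natural level-wise weak equivalence $(\pi_0)_*\mathrm{St}_\cC\simeq(\pi_0)_*\cF_{(-)}(\cC)$ of functors $\sSet_{/N\cC}\to[\cC^\op,\Set]$. (Alternatively, one can avoid Ken Brown and simply observe that $\pi_0$ carries weak homotopy equivalences of simplicial sets to bijections.)

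The final step is to paste in the natural isomorphism $(\pi_0)_*\cF_{(-)}(\cC)\cong\cT_\cC\,c$ of \cref{prop:leftadjcommute1}. Composing it with the natural level-wise weak equivalence from the previous step produces a natural level-wise weak equivalence $(\pi_0)_*\mathrm{St}_\cC\simeq\cT_\cC\,c$, which is precisely the asserted commutativity of the square. I do not expect a genuine obstacle here: the only point needing a line of justification is that $(\pi_0)_*$ transports Lurie's comparison to a level-wise weak equivalence, and this is handled by cofibrancy (or by homotopy-invariance of $\pi_0$); all the substantial work --- the identification $(\pi_0)_*\cF_{(-)}(\cC)\cong\cT_\cC\,c$, which rests on \cref{lem:LKE} --- has already been done in \cref{prop:leftadjcommute1}.
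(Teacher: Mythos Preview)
Your proposal is correct and follows essentially the same approach as the paper: invoke \cite[Lemma 3.2.5.17(1)]{HTT} for the comparison $\mathrm{St}_\cC\Rightarrow\cF_{(-)}(\cC)$, push it through $(\pi_0)_*$ using that $\pi_0$ preserves all weak equivalences (the paper phrases this via cofibrancy of all objects in $\sSet$, which is exactly your Ken Brown argument), and then compose with the isomorphism of \cref{prop:leftadjcommute1}.
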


\begin{proof}
    By \cite[Lemma 3.2.5.17(1)]{HTT}, there is a natural transformation 
    \[ \mathrm{St}_\cC \Rightarrow \cF_{(-)}(\cC) \]
    whose components are weak equivalences in $[\cC^{\op},\sSet]_\mathrm{proj}$. Now, note that the left Quillen functor 
    \[ \pi_0\colon \sSet\to \Set\]
    preserves all weak equivalences since all objects are cofibrant in $\sSet$. Hence the functor 
    \[ (\pi_0)_*\colon [\cC^{\op},\sSet]_\mathrm{proj}\to [\cC^{\op},\Set]_\mathrm{proj}\]
    also preserves all weak equivalences as they are defined level-wise. This implies that the natural transformation obtained by whiskering
    \[ (\pi_0)_*\mathrm{St}_\cC \Rightarrow (\pi_0)_*\cF_{(-)}(\cC) \]
     is level-wise a weak equivalence in $[\cC^{\op},\Set]_\mathrm{proj}$. Combining this with the isomorphism from \cref{prop:leftadjcommute1}, we get the desired result. 
\end{proof}

\begin{theorem}\label{prop:rightQuillencommute1}
    The following square of homotopically fully faithful right Quillen functors commutes up to a level-wise weak equivalence. 
    \begin{tz}
\node[](1) {$[\cC^{\op},\Set]_{\mathrm{proj}}$}; 
\node[right of=1,xshift=2.6cm](2) {$(\Cat_{/\cC})_{\mathrm{discfib}}$}; 
\node[below of=1](1') {$[\cC^\op,\sSet]_\mathrm{proj}$};
\node[below of=2](2') {$(\sSet_{/N\cC})_{\mathrm{contra}}$}; 

\draw[->] (1) to node[above,la]{$\int_\cC$} node[below,la]{$\simeq_{QE}$} (2); 
\draw[right hook->] (1) to node[left,la]{$\iota_*$} (1'); 
\draw[right hook->] (2) to node[right,la]{$N$} (2');
\draw[->] (1') to node[below,la]{$\mathrm{Un}_\cC$} node[above,la]{$\simeq_{QE}$} (2');
\end{tz}
\end{theorem}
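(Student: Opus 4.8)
The plan is to obtain this square of right Quillen functors as the \emph{mate} of the square of left Quillen functors from \cref{prop:leftQuillencommute1}, and then to exploit that every object of $[\cC^{\op},\Set]_{\mathrm{proj}}$ is fibrant. First I would record, as a recollection of earlier results, why the four functors are homotopically fully faithful right Quillen functors: $\int_\cC$ and $\mathrm{Un}_\cC$ are the right adjoints of the Quillen equivalences of \cref{thm:discreteQuillenequiv} and \cite[Theorem 2.2.1.2]{HTT}, so their right derived functors are equivalences of homotopy categories and in particular homotopically fully faithful; and $N$, $\iota_*$ are the right adjoints of the Quillen reflections $c\dashv N$ of \cref{discQuillenrefl} and $(\pi_0)_*\dashv\iota_*$ discussed above, whose derived counits are invertible, so that $\mathbb{R}N$ and $\mathbb{R}\iota_*$ are homotopically fully faithful as well.

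For the commutativity, observe that $\cT_\cC\,c$ and $(\pi_0)_*\mathrm{St}_\cC$ are left Quillen functors $(\sSet_{/N\cC})_{\mathrm{contra}}\to[\cC^{\op},\Set]_{\mathrm{proj}}$ whose right adjoints are, respectively, $N\int_\cC$ and $\mathrm{Un}_\cC\,\iota_*$. By \cref{prop:leftQuillencommute1} there is a natural transformation $\theta\colon(\pi_0)_*\mathrm{St}_\cC\Rightarrow\cT_\cC\,c$ all of whose components are weak equivalences. Taking the mate of $\theta$ along these two adjunctions yields a natural transformation $\theta^{\vee}\colon N\int_\cC\Rightarrow\mathrm{Un}_\cC\,\iota_*$ which, up to the canonical identifications of iterated adjoints, is exactly the comparison transformation filling the square in the statement. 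So it remains to show that each component of $\theta^{\vee}$ is a weak equivalence in $(\sSet_{/N\cC})_{\mathrm{contra}}$.

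This is where I would invoke the standard mate calculus for Quillen adjunctions: if $\alpha\colon L\Rightarrow L'$ is a natural transformation between left Quillen functors $\mathcal{M}\to\mathcal{N}$ whose components at cofibrant objects are weak equivalences, then the mate $\alpha^{\vee}\colon R'\Rightarrow R$ has components at fibrant objects that are weak equivalences. Indeed, $\alpha$ induces an isomorphism $\mathbb{L}L\cong\mathbb{L}L'$ of left derived functors; passing to the adjoint right derived functors produces an isomorphism $\mathbb{R}R\cong\mathbb{R}R'$ realised by the derived mate of $\alpha$, which at a fibrant object is represented by the corresponding component of $\alpha^{\vee}$, and a morphism becoming invertible in the homotopy category is a weak equivalence. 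Applying this with $\mathcal{M}=(\sSet_{/N\cC})_{\mathrm{contra}}$ — in which every object is cofibrant, since $\emptyset\to X$ is always a monomorphism — and $\mathcal{N}=[\cC^{\op},\Set]_{\mathrm{proj}}$, the components of $\theta$ are weak equivalences at all objects, in particular at all cofibrant ones, so $\theta^{\vee}$ is a weak equivalence at every fibrant object of $[\cC^{\op},\Set]_{\mathrm{proj}}$. Since all maps are fibrations in the projective model structure on $[\cC^{\op},\Set]$, every object there is fibrant, whence $\theta^{\vee}$ is a level-wise weak equivalence, as required.

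The only step that is not a routine unwinding is the mate calculus of the previous paragraph, namely the bookkeeping identifying the derived mate of $\theta$ with the mate of the derived transformation $\mathbb{L}\theta$; this is standard (and in the same spirit as the argument behind \cite[Lemma 3.3]{mehmet} used earlier in the paper). Matching up the iterated adjoints on both sides, the cofibrancy of all objects of $(\sSet_{/N\cC})_{\mathrm{contra}}$, the fibrancy of all objects of $[\cC^{\op},\Set]_{\mathrm{proj}}$, and the recollection of the homotopically-fully-faithful claims are all immediate from the results established above.
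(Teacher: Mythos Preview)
Your proposal is correct and follows essentially the same approach as the paper: deduce the right-Quillen square from \cref{prop:leftQuillencommute1} by passing to mates, using that every object of $[\cC^{\op},\Set]_{\mathrm{proj}}$ is fibrant. The paper's proof is a one-liner that packages your mate-calculus paragraph into a citation of \cite[Lemma~E.2.12]{JoyalVolumeII}, which is precisely the statement you spell out (a level-wise weak equivalence between left Quillen functors induces, via mates, a level-wise weak equivalence between the right adjoints on fibrant objects); your added discussion of why the four right adjoints are homotopically fully faithful is accurate bookkeeping that the paper leaves implicit.
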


\begin{proof}
    This follows from \cref{prop:leftQuillencommute1} and \cite[Lemma E.2.12]{JoyalVolumeII}, since all objects in $[\cC^{\op},\Set]_\mathrm{proj}$ are fibrant. 
\end{proof}

\section{Comparison with infinity: cartesian case}\label{section:cartinfty}

This section is analogous to the preceding one, except that we focus on the case of Grothendieck fibrations. That is, we want to compare the model structure on $\Cat^+_{/\cC^\sharp}$ for cart-marked Grothendieck fibrations constructed in \cref{thm:cartesianMS} to its $\infty$-counterpart. 

Our decision to resolve our technical issues through the use of marked categories was inspired by the fact that, in the corresponding $\infty$-setting, Lurie's strategy is to use \emph{marked simplicial sets}, which are pairs $(X,E)$ of a simplicial set $X$ and a set $E$ of $1$-simplices of~$X$ containing the degenerate ones. Together with maps of simplicial sets that preserve the marking, they form a category $\sSet^+$. Similarly to \cref{forgetmarkings}, we have functors $U\colon \sSet^+\to \sSet$, $(-)^\flat\colon \sSet\to \sSet^+$, and $(-)^\sharp\colon \sSet\to \sSet^+$ given by forgetting the marking, marking only degeneracies, and marking every $1$-simplex, respectively.

In \cite[Proposition 3.1.3.7]{HTT}, Lurie constructs a model structure on the slice category $\sSet^+_{/(N\cC)^\sharp}$, called the \emph{cartesian model structure} and denoted here by $(\sSet^+_{/(N\cC)^\sharp})_\mathrm{Cart}$. This model structure has as cofibrations the monomorphisms of simplicial sets, and its fibrant objects are the \emph{naturally marked cartesian fibrations}---the $\infty$-analogue of cart-marked Grothendieck fibrations. 

The goal of this section is to study the following square of Quillen adjunctions.
\begin{diagram} \label{QP2}
    \node[](1) {$[\cC^{\op},\Cat]_{\mathrm{proj}}$}; 
    \node[below of=1,yshift=-.5cm](2) {$[\cC^\op,\sSet^+]_\mathrm{proj}$}; 
    \node[right of=1,xshift=2.8cm](3) {$(\Cat^+_{/\cC^\sharp})_{\mathrm{GrFib}}$}; 
    \node[below of=3,yshift=-.5cm](4) {$(\sSet^+_{/(N\cC)^\sharp})_{\mathrm{Cart}}$};

    \draw[->] ($(1.east)-(0,5pt)$) to node[below,la]{$\int^+_\cC$} ($(3.west)-(0,5pt)$);
\draw[->] ($(3.west)+(0,5pt)$) to node[above,la]{$\cT^+_\cC$} ($(1.east)+(0,5pt)$);
\node[la] at ($(1.east)!0.5!(3.west)$) {$\bot$};
\draw[->] ($(2.east)-(0,5pt)$) to node[below,la]{$\mathrm{Un}^+_\cC$} ($(4.west)-(0,5pt)$);
\draw[->] ($(4.west)+(0,5pt)$) to node[above,la]{$\mathrm{St}^+_\cC$} ($(2.east)+(0,5pt)$);
\node[la] at ($(2.east)!0.5!(4.west)$) {$\bot$};

\draw[->] ($(3.south)+(5pt,0)$) to node[right,la]{$N^+$} ($(4.north)+(5pt,0)$); 
\draw[->] ($(4.north)-(5pt,0)$) to node[left,la]{$c^+$} ($(3.south)-(5pt,0)$); 
\node[la] at ($(3.south)!0.5!(4.north)$) {\rotatebox{90}{$\bot$}};
\draw[->] ($(1.south)+(5pt,0)$) to node[right,la]{$(N^+(-)^\natural)_*$} ($(2.north)+(5pt,0)$); 
\draw[->] ($(2.north)-(5pt,0)$) to node[left,la]{$(\Loc\circ c^+)_*$} ($(1.south)-(5pt,0)$); 
\node[la] at ($(1.south)!0.5!(2.north)$) {\rotatebox{90}{$\bot$}};
\end{diagram}
The adjunction at the top, presented in \cref{adjunctionmarkedgroth}, has the marked Grothendieck construction as the right adjoint, and is a Quillen equivalence by \cref{thm:markedGCQE}. The bottom adjunction is referred to as \emph{straightening-unstraightening}, and \cite[Theorem 3.2.0.1]{HTT} shows it is a Quillen equivalence when considering the projective model structure on functors into $\sSet^+$ endowed with the model structure $(\sSet^+_{/\Delta[0]})_\mathrm{Cart}$. In \cref{markednerveadj,markednerveQrefl,cor:cNmarkedQP}, we construct the adjunctions on the right and left sides and show that they both are Quillen reflections. Finally, \cref{prop:leftQuillencommute,prop:rightQuillencommute} prove that the squares of left and right adjoints in \eqref{QP2} commute up to a level-wise weak equivalence.

We begin by studying the adjunction $c^+\dashv N^+$.

\begin{defn}
We define the \textbf{marked nerve} $N^+\colon \Cat^+\to \sSet^+$ to be the functor sending 
\begin{numbered}[leftmargin=1.1cm]
\item a marked category $(\cC,E)$ to the marked simplicial set $(N\cC,E)$, 
\item a marked functor $F\colon (\cC,E_\cC)\to (\cD,E_\cD)$ to the marked map of simplicial sets $NF\colon (N\cC,E_{\cC})\to (N\cD,E_{\cD})$. 
\end{numbered}
We define the \textbf{marked categorification} $c^+\colon \sSet^+\to \Cat^+$ to be the functor sending
\begin{numbered}[leftmargin=1.1cm]
\item a marked simplicial set $(X,E)$ to the marked category $(cX,E')$, where a morphism $f$ of $cX$ is in $E'$ if and only if there is a marked $1$-simplex that represents it,
\item a marked map of simplicial sets $F\colon (X,E_X)\to (Y,E_Y)$ to the marked functor $cF\colon (cX,E_X')\to (cY,E_Y')$. 
\end{numbered}
\end{defn}

\begin{prop}\label{markednerveadj}
There is an adjunction 
\begin{tz}
\node[](1) {$\Cat^+$}; 
\node[right of=1,xshift=1.2cm](2) {$\sSet^+$}; 

\draw[->] ($(1.east)-(0,5pt)$) to node[below,la]{$N^+$} ($(2.west)-(0,5pt)$);
\draw[->] ($(2.west)+(0,5pt)$) to node[above,la]{$c^+$} ($(1.east)+(0,5pt)$);

\node[la] at ($(1.east)!0.5!(2.west)$) {$\bot$};
\end{tz}
whose right adjoint is fully faithful. 
\end{prop}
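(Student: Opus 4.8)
The plan is to lift the classical adjunction $c \dashv N$ between $\Cat$ and $\sSet$ to the marked setting by a direct verification that it respects markings, and then to read off fully faithfulness of $N^+$ from invertibility of the counit. Write $\eta\colon \id_{\sSet}\Rightarrow Nc$ and $\varepsilon\colon cN\Rightarrow \id_{\Cat}$ for the unit and counit of $c\dashv N$; recall that $\varepsilon$ is a natural isomorphism, and that for a simplicial set $X$ the map $\eta_X\colon X\to NcX$ sends a $1$-simplex of $X$ to the morphism of $cX$ that it represents.

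First I would check that $\eta$ lifts to a natural transformation $\id_{\sSet^+}\Rightarrow N^+c^+$. Given a marked simplicial set $(X,E)$, we have $N^+c^+(X,E)=(NcX,E')$ where, by definition of $c^+$, the set $E'$ consists of the morphisms of $cX$ represented by some $1$-simplex lying in $E$ (this is a legitimate marking since degenerate $1$-simplices are marked, so $E'$ contains the identities). Since $\eta_X$ carries a marked $1$-simplex $e\in E$ to the morphism it represents, which lies in $E'$ by construction, the map $\eta_X$ is a marked map $(X,E)\to N^+c^+(X,E)$, and naturality is inherited from that of $\eta$.

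Next I would show that $\varepsilon$ lifts to a natural isomorphism $c^+N^+\Rightarrow \id_{\Cat^+}$. For a marked category $(\cD,E)$ we have $c^+N^+(\cD,E)=(cN\cD,E'')$, where $E''$ is the set of morphisms of $cN\cD$ represented by a $1$-simplex of $N\cD$ lying in $E$. Every $1$-simplex of $N\cD$ is a morphism of $\cD$, and under the isomorphism $\varepsilon_\cD\colon cN\cD\xrightarrow{\cong}\cD$ such a $1$-simplex is sent precisely to the corresponding morphism of $\cD$; hence $\varepsilon_\cD$ identifies $E''$ with $E$ and is therefore an isomorphism of marked categories. Naturality again comes from that of $\varepsilon$.

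Finally, since the forgetful functors $\sSet^+\to\sSet$ and $\Cat^+\to\Cat$ are faithful, the triangle identities for $\eta$ and $\varepsilon$ transport to the marked lifts constructed above, establishing the adjunction $c^+\dashv N^+$; and since its counit $\varepsilon$ is a natural isomorphism, $N^+$ is fully faithful. The only point requiring genuine care is the identification $c^+N^+(\cD,E)\cong(\cD,E)$ on markings: one must check that no spurious morphisms of $cN\cD$ become marked, i.e.\ that $E''$ is exactly $E$ and not larger, which rests on the observation that distinct morphisms of $\cD$ are represented by distinct $1$-simplices of $N\cD$. Everything else is a routine transfer of the unmarked adjunction.
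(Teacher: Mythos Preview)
Your proof is correct. Both you and the paper reduce to lifting the unmarked adjunction $c\dashv N$ by checking marking compatibility, so the core idea is the same; the difference is only in packaging. The paper works with the hom-set formulation, verifying in one line that for $(X,E)\in\sSet^+$ and $(\cD,M)\in\Cat^+$ a functor $F\colon cX\to\cD$ satisfies $F(E')\subseteq M$ if and only if its transpose $\overline{F}\colon X\to N\cD$ satisfies $\overline{F}(E)\subseteq M$. You instead lift the unit and counit and transport the triangle identities via faithfulness of the forgetful functors. Your route is slightly longer but has the advantage of making the fully faithfulness of $N^+$ explicit: you verify directly that $\varepsilon$ is an isomorphism of \emph{marked} categories, whereas the paper's proof leaves this implicit. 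Your caution about ``spurious'' marked morphisms in $c^+N^+(\cD,E)$ is well placed and correctly resolved by noting that $1$-simplices of $N\cD$ biject with morphisms of $\cD$.
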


\begin{proof}
As we know that $c\dashv N$ is already an adjunction, it is enough to show that, given a marked simplicial set $(X,E)$ and a marked category $(\cC,M)$, a functor $F\colon cX\to \cC$ preserves the marking if and only if its transpose $\overline{F}\colon X\to N\cC$ preserves the marking. More explicitly, if $E'$ denotes the marking in $cX$ as defined above, we need to show that $F(E')\subseteq M$ if and only if $\overline{F}(E)\subseteq M$. However, this is clear by definition. 
\end{proof}

Since $N^+(\cC^\sharp)=(N\cC)^\sharp$, the adjunction $c^+\dashv N^+$ induces an adjunction between slices over $\cC^\sharp$ and $(N\cC)^\sharp$, which is a Quillen reflection as we now show. 

\begin{prop}\label{markednerveQrefl}
The adjunction 
\begin{tz}
\node[](1) {$(\Cat^+_{/\cC^\sharp})_\mathrm{GrFib}$}; 
\node[right of=1,xshift=2.7cm](2) {$(\sSet^+_{/(N\cC)^\sharp})_\mathrm{Cart}$}; 

\draw[->] ($(1.east)-(0,5pt)$) to node[below,la]{$N^+$} ($(2.west)-(0,5pt)$);
\draw[->] ($(2.west)+(0,5pt)$) to node[above,la]{$c^+$} ($(1.east)+(0,5pt)$);

\node[la] at ($(1.east)!0.5!(2.west)$) {$\bot$};
\end{tz}
is a Quillen reflection. 
\end{prop}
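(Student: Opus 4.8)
The plan is to follow the blueprint of the proof of \cref{discQuillenrefl}. The sliced adjunction $c^+\dashv N^+$ between $(\Cat^+_{/\cC^\sharp})_\mathrm{GrFib}$ and $(\sSet^+_{/(N\cC)^\sharp})_\mathrm{Cart}$ is already in place (using $N^+(\cC^\sharp)=(N\cC)^\sharp$), so it remains to check that it is a Quillen adjunction whose derived counit is a weak equivalence. First, $c^+$ preserves cofibrations: a monomorphism of marked simplicial sets is injective on $0$-simplices, hence its image under $c^+$ is injective on objects, i.e.\ a cofibration in $(\Cat^+_{/\cC^\sharp})_\mathrm{GrFib}$. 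By \cite[Proposition E.2.14]{JoyalVolumeII} it then suffices to show that $N^+$ preserves fibrations between fibrant objects. The fibrant objects on the right are the naturally marked cartesian fibrations, and $N^+$ takes a cart-marked Grothendieck fibration to one of these: the nerve of a Grothendieck fibration is a cartesian fibration, and under the nerve a morphism of $\cP$ is $P$-cartesian exactly when the corresponding edge is $N^+P$-cartesian, so the marking is the natural one. In both model structures the fibrations between fibrant objects are detected by the right lifting property against a set of (marked) anodyne extensions --- on our side by $\an^+$ (via \cref{naivefibvsisofib} and \cref{thm:cartesianMS}(iv)), and on Lurie's side by the marked anodyne morphisms of \cite[\S 3.1]{HTT}, base-changed along maps to $(N\cC)^\sharp$. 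By adjunction, it therefore suffices to prove that $c^+$ sends marked anodyne morphisms into $\an^+$.

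Since $c^+$ is a left adjoint, it preserves pushouts, transfinite composites and retracts, so it is enough to check that $c^+$ sends a generating set of marked anodyne morphisms --- those of \cite[Definition 3.1.1.1]{HTT}, base-changed along all maps to $(N\cC)^\sharp$ --- into the weakly saturated class $\an^+$. I would then go through these generators. The categorification of an inner horn inclusion $\Lambda^i[n]\to\Delta[n]$ is an isomorphism for all $n\geq 2$ (the necessary $2$-faces are already present), and for $n\geq 4$ the categorification of \emph{any} horn inclusion is an isomorphism; consequently $c^+$ sends the inner flat horn generators, the inner ``marked'' horn generators, and all horn generators with $n\geq 4$ to isomorphisms, hence into $\an^+$. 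The finitely many remaining generators are matched, after applying $c^+$, with the generators (i)--(v) of $\an^+$ up to pushout: the generator with $n=1$ becomes the map (i); the outer generator with $n=2$ (whose terminal edge is marked) becomes (ii); the generator \cite[Definition 3.1.1.1(3)]{HTT} built on the $1$-horn of $\Delta[2]$ with both edges marked becomes (v) --- here one uses that $c^+$ of that marked $1$-horn marks only its two generating morphisms and not their composite, as the latter is represented by no $1$-simplex of the horn; the generator forcing equivalences to be marked becomes (iv), or a transfinite composite of pushouts of (iv); and the outer generator $\Lambda^3[3]\to\Delta[3]$ becomes a pushout of (iii), the relevant pushout square exhibiting $c^+(\Lambda^3[3]\to\Delta[3])$ as the map that identifies the two parallel morphisms $0\to 2$ in $c\Lambda^3[3]$.

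I expect the main obstacle to be precisely this last batch of identifications: one must keep careful track of which morphisms survive as marked under $c^+$ --- recalling that a morphism of $c^+(X,E)$ is marked iff \emph{some} $1$-simplex representing it lies in $E$ --- and then produce the explicit pushout squares, notably for the $\Lambda^3[3]$ generator. This is the marked counterpart of the computation in the proof of \cref{discQuillenrefl}, with an additional layer of bookkeeping from the markings. Finally, for the reflection statement: by \cref{markednerveadj} the right adjoint $N^+$ is fully faithful, so the counit $c^+N^+\Rightarrow\id_{\Cat^+}$ is a natural isomorphism, and this remains true after passing to the slices over $\cC^\sharp$ and $(N\cC)^\sharp$. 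Hence the derived counit at any fibrant object of $(\Cat^+_{/\cC^\sharp})_\mathrm{GrFib}$ is an isomorphism, in particular a weak equivalence, which shows that $c^+\dashv N^+$ is a Quillen reflection.
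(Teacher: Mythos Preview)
Your argument is correct and follows essentially the same route as the paper's: show that $c^+$ preserves cofibrations, reduce via \cite[Proposition E.2.14]{JoyalVolumeII} to showing that $c^+$ sends generating marked anodyne morphisms into $\an^+$, identify the finitely many nontrivial cases with (pushouts of) the generators (i)--(v), and deduce the reflection from $c^+N^+\cong\id_{\Cat^+}$. The only noteworthy difference is the choice of generating set: the paper uses the set from \cite[Lemma 4.34]{kim}, whose ``equivalence'' generator is the single map $(N\bI)^\flat\to(N\bI)^\sharp$ (sent by $c^+$ directly to (iv)), whereas you work with Lurie's class $K^\flat\to K^\sharp$ ranging over all Kan complexes $K$ and must therefore observe that $cK$ is a groupoid and exhibit $(cK)^\flat\to(cK)^\sharp$ as a transfinite composite of pushouts of (iv); your aside about ``inner marked horn generators'' is superfluous---there are none in \cite[Definition 3.1.1.1]{HTT}---but this does not affect the argument.
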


\begin{proof}
First note that $c^+$ preserves cofibrations, as $c\colon \sSet\to \Cat$ is left Quillen and so takes monomorphisms to injective-on-objects functors. Following the same reasoning as in the proof of \cref{discQuillenrefl}, it suffices to show that $c^+$ preserves anodyne extensions.

As explained in \cite[Lemma 4.34]{kim}, the anodyne extensions of the model structure $(\sSet^+_{/(N\cC)^\sharp})_\mathrm{Cart}$ are cofibrantly generated by the following morphisms: 
\begin{rome}[leftmargin=1.1cm]
\item the inclusion $\Lambda^t[n]^\flat\to \Delta[n]^\flat\to (N\cC)^\sharp$ for all $n\geq 2$ and $0<t<n$, 
\item the inclusion $(\Lambda^n[n], \{n-1\to n\}\cap \Lambda^n[n]_1)\to (\Delta[n], \{n-1\to n\})\to (N\cC)^\sharp$ for all $n\geq 1$, 
\item the inclusion $(\Delta[2],\{0\to 1,1\to 2\})\to \Delta[2]^\sharp\to (N\cC)^\sharp$, 
\item the inclusion $(N\bI)^\flat\to (N\bI)^\sharp\to (N\cC)^\sharp$.
\end{rome}  When we apply $c$, we see that most of these generating morphisms are mapped to the identity, except for the following:
\begin{enumerate}[leftmargin=1.1cm]
    \item the map in (ii) for $n=1$ is sent to the anodyne extension $[0]\xrightarrow{1} [1]^\sharp\to \cC^\sharp$, 
    \item the map in (ii) for $n=2$ is sent to the anodyne extension \[ (\Lambda^2[2], \{1\to 2\})\to ([2],\{1\to 2\})\to \cC^\sharp, \] 
    \item the map in (ii) for $n=3$ is sent to a pushout of the generating anodyne extension $([2]\sqcup_{\Lambda^2[2]} [2],\{1\to 2\})\to ([2], \{1\to 2\})\to \cC^\sharp$,
    \item the map in (iii) is sent to the anodyne extension $([2],\{0\to 1,1\to 2\})\to [2]^\sharp\to \cC^\sharp$, 
    \item the map in (iv) is sent to the anodyne extension $\bI^\flat\to \bI^\sharp\to \cC^\sharp$.
\end{enumerate}
In particular, all of these maps are anodyne extensions in $(\Cat^+_{/\cC^\sharp})_{\mathrm{GrFib}}$, and so $c^+$ is left Quillen.

Moreover, since $c^+N^+\cong \id_{\Cat^+}$, we have that the component of the (derived) counit at a cart-marked Grothendieck fibration $P\colon (\cP,E)\to \cC^\sharp$ is an isomorphism, showing that $c^+\dashv N^+$ is a Quillen reflection. 
\end{proof}

Considering the particular case of $\cC=[0]$ yields the following. 

\begin{cor} \label{cor:cNmarkedQP}
    The adjunction 
\begin{tz}
\node[](1) {$\Cat^+$}; 
\node[right of=1,xshift=1.2cm](2) {$\sSet^+$}; 

\draw[->] ($(1.east)-(0,5pt)$) to node[below,la]{$N^+$} ($(2.west)-(0,5pt)$);
\draw[->] ($(2.west)+(0,5pt)$) to node[above,la]{$c^+$} ($(1.east)+(0,5pt)$);

\node[la] at ($(1.east)!0.5!(2.west)$) {$\bot$};
\end{tz}
is a Quillen reflection. 
\end{cor}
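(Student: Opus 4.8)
The plan is to obtain this as the special case $\cC = [0]$ of \cref{markednerveQrefl}, exactly as \cref{cor:locisoQP} was obtained from \cref{thm:markedGCQE}. First I would record the relevant identifications. Since $N[0] = \Delta[0]$, the marked category $[0]^\sharp = [0]$ is terminal in $\Cat^+$ and the marked simplicial set $(N[0])^\sharp = \Delta[0]$ is terminal in $\sSet^+$; hence the forgetful functors out of the slices furnish canonical isomorphisms of categories $\Cat^+_{/[0]^\sharp} \cong \Cat^+$ and $\sSet^+_{/(N[0])^\sharp} \cong \sSet^+$. Under the first of these, the model structure $(\Cat^+_{/[0]^\sharp})_\mathrm{GrFib}$ is, by definition, the model structure with which we have agreed to equip $\Cat^+$, and under the second, $(\sSet^+_{/(N[0])^\sharp})_\mathrm{Cart}$ is Lurie's cartesian model structure $(\sSet^+_{/\Delta[0]})_\mathrm{Cart}$ on marked simplicial sets.

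Next I would check that, along these identifications, the Quillen pair $c^+ \dashv N^+$ of \cref{markednerveQrefl} is precisely the pair $c^+ \dashv N^+$ of \cref{markednerveadj}. Indeed, the marked nerve of a marked category $(\cP,E)$, regarded as an object over $(N[0])^\sharp = \Delta[0]$, is just $(N\cP, E)$, and the marked categorification of a marked simplicial set over $\Delta[0]$ is just $(cX, E')$; so both functors restrict to the ones from the definition preceding \cref{markednerveadj}. Consequently, \cref{markednerveQrefl} instantiated at $\cC = [0]$ asserts exactly that the adjunction in the statement is a Quillen reflection, which completes the argument.

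Since the whole proof is a specialization, there is no substantial obstacle; the only point requiring (entirely routine) care is the bookkeeping that verifies the slice constructions and the two model structures match up under the identifications above, and this is immediate once one observes that $[0]^\sharp$ and $(N[0])^\sharp$ are terminal objects in their respective categories.
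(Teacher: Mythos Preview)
Your proposal is correct and matches the paper's approach exactly: the paper obtains this corollary simply by specializing \cref{markednerveQrefl} to $\cC=[0]$, and your write-up spells out precisely the (routine) identifications needed to make that specialization go through.
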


As a consequence of \cref{cor:locisoQP,cor:cNmarkedQP}, we obtain that the adjunction on the left of \eqref{QP2} is also a Quillen reflection. 

\begin{prop}
The composite of adjunctions
\begin{tz}
\node[](0) {$[\cC^{\op},\Cat]_\mathrm{proj}$};

\node[right of=0,xshift=2.6cm](1) {$[\cC^{\op},\Cat^+]_\mathrm{proj}$}; 
\node[right of=1,xshift=2.6cm](2) {$[\cC^{\op},\sSet^+]_\mathrm{proj}$}; 

\draw[->] ($(0.east)-(0,5pt)$) to node[below,la]{$(-)^\natural_*$} ($(1.west)-(0,5pt)$);
\draw[->] ($(1.west)+(0,5pt)$) to node[above,la]{$\Loc_*$} ($(0.east)+(0,5pt)$);
\node[la] at ($(0.east)!0.5!(1.west)$) {$\bot$};

\draw[->] ($(1.east)-(0,5pt)$) to node[below,la]{$N^+_*$} ($(2.west)-(0,5pt)$);
\draw[->] ($(2.west)+(0,5pt)$) to node[above,la]{$c^+_*$} ($(1.east)+(0,5pt)$);

\node[la] at ($(1.east)!0.5!(2.west)$) {$\bot$};
\end{tz}
is a Quillen reflection.
\end{prop}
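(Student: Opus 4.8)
The statement is formal; the plan is to feed \cref{cor:locisoQP,cor:cNmarkedQP} into two standard principles: that a composite of Quillen pairs is a Quillen pair whose total derived functors are the composites of the total derived functors, and that post-composition with $[\cC^\op,-]$ carries a Quillen reflection to a Quillen reflection of projective model structures. First I would record that, with $\Cat^+$ carrying the model structure $(\Cat^+_{/[0]})_\mathrm{Grfib}$ underlying both \cref{cor:locisoQP} and \cref{cor:cNmarkedQP}, the composite adjunction $\Loc\circ c^+\dashv N^+\circ(-)^\natural$ between $(\sSet^+_{/\Delta[0]})_\mathrm{Cart}$ and $\Cat$ (with its canonical model structure) is a Quillen reflection. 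Indeed it is a Quillen pair, being a composite of the Quillen pairs of \cref{cor:cNmarkedQP,cor:locisoQP}, and its total right derived functor satisfies $\mathbb{R}(N^+\circ(-)^\natural)\cong\mathbb{R}N^+\circ\mathbb{R}(-)^\natural$; here $\mathbb{R}(-)^\natural$ is an equivalence of homotopy categories because $\Loc\dashv(-)^\natural$ is a Quillen equivalence, while $\mathbb{R}N^+$ is fully faithful because $c^+\dashv N^+$ is a Quillen reflection, and an equivalence followed by a fully faithful functor is fully faithful; so $\mathbb{R}(N^+\circ(-)^\natural)$ is fully faithful, i.e.\ the derived counit of $\Loc c^+\dashv N^+(-)^\natural$ is a weak equivalence.

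The one step requiring an actual argument is the post-composition principle. Suppose $L\dashv R$ is a Quillen reflection with $L\colon\mathcal{M}\to\mathcal{N}$ and $R\colon\mathcal{N}\to\mathcal{M}$, and consider $L_*\dashv R_*$ between $[\cC^\op,\mathcal{M}]_\mathrm{proj}$ and $[\cC^\op,\mathcal{N}]_\mathrm{proj}$. Since the (trivial) fibrations of a projective model structure are precisely the level-wise ones and $R$ preserves them, $R_*$ is right Quillen. For the derived counit, let $Y\in[\cC^\op,\mathcal{N}]_\mathrm{proj}$ be fibrant and $Q(R\circ Y)\xrightarrow{\sim}R\circ Y$ a projective-cofibrant replacement; because projective cofibrations are in particular level-wise cofibrations and projective weak equivalences are level-wise, for each $c\in\cC$ the map $Q(R\circ Y)(c)\to R(Y(c))$ is a cofibrant replacement in $\mathcal{M}$ while $Y(c)$ is fibrant in $\mathcal{N}$, so the component at $c$ of the derived counit of $L_*\dashv R_*$ at $Y$ is a model for the derived counit of $L\dashv R$ at $Y(c)$, hence a weak equivalence. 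Thus the derived counit of $L_*\dashv R_*$ is a level-wise, hence projective, weak equivalence, and $L_*\dashv R_*$ is a Quillen reflection.

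Applying this to $L=\Loc\circ c^+$ and $R=N^+\circ(-)^\natural$, and using $L_*=\Loc_*\circ c^+_*$ and $R_*=N^+_*\circ(-)^\natural_*$, we recognise the adjunction in the statement and conclude it is a Quillen reflection. There is no real obstacle here beyond bookkeeping: one must confirm that \cref{cor:locisoQP} and \cref{cor:cNmarkedQP} use the same model structure $(\Cat^+_{/[0]})_\mathrm{Grfib}$ on $\Cat^+$ (they do, by the discussion preceding each) and that the displayed adjunctions compose in the stated direction; all homotopical content is supplied by those two corollaries.
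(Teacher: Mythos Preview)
Your proposal is correct and follows the same route the paper indicates: the paper gives no explicit proof here, merely the sentence ``As a consequence of \cref{cor:locisoQP,cor:cNmarkedQP}, we obtain that the adjunction on the left of \eqref{QP2} is also a Quillen reflection,'' so what you have written spells out precisely the argument left implicit there. The only cosmetic difference is the order of operations: the paper displays the intermediate category $[\cC^{\op},\Cat^+]_\mathrm{proj}$ and so implicitly post-composes each of the two base adjunctions separately before composing, whereas you first compose $\Loc\circ c^+\dashv N^+\circ(-)^\natural$ at the base level and then post-compose once; since $(L'\circ L)_*=L'_*\circ L_*$ this is immaterial.
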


Just as in the discrete case, the complexity of the straightening-unstraightening adjunction $\mathrm{St}^+_\cC\dashv \mathrm{Un}^+_\cC$ makes it hard to directly unpack the commutativity of the square in \eqref{QP2}. Hence, we first work with a simplified construction 
\begin{tz}
\node[](1) {$[\cC^{\op},\sSet^+]_\mathrm{proj}$}; 
\node[right of=1,xshift=2.8cm](2) {$(\sSet^+_{/(N\cC)^\sharp})_\mathrm{Cart}$}; 

\draw[->] ($(1.east)-(0,5pt)$) to node[below,la]{$N^+_{(-)}(\cC)$} ($(2.west)-(0,5pt)$);
\draw[->] ($(2.west)+(0,5pt)$) to node[above,la]{$\cF^+_{(-)}(\cC)$} ($(1.east)+(0,5pt)$);

\node[la] at ($(1.east)!0.5!(2.west)$) {$\bot$};
\end{tz}
given by the \emph{marked relative nerve} and its left adjoint introduced in \cite[Definition 3.2.5.12 and Remark 3.2.5.13]{HTT}. By \cite[Proposition 3.2.5.18(2)]{HTT}, this provides another Quillen equivalence between the projective and cartesian model structures. Moreover, since the left adjoint $\cF^+_{(-)}(\cC)$ is related by a level-wise weak equivalence to the straightening functor $\mathrm{St}^+_{\cC}$ by \cite[Lemma 3.2.5.17(2)]{HTT}, it is a suitable replacement to study the commutativity of the square \eqref{QP2} of adjoint functors.

\begin{prop} \label{prop:leftadjcommute}
The following square of left adjoints commutes up to isomorphism.
    \begin{tz}
\node[](1) {$\sSet^+_{/(N\cC)^\sharp}$}; 
\node[right of=1,xshift=2.1cm](2) {$[\cC^{\op},\sSet^+]$}; 
\node[below of=1](1') {$\Cat^+_{/\cC^\sharp}$};
\node[below of=2](2') {$[\cC^{\op},\Cat]$}; 

\draw[->] (1) to node[above,la]{$\cF_{(-)}^+(\cC)$} (2); 
\draw[->] (1) to node[left,la]{$c^+$} (1'); 
\draw[->] (2) to node[right,la]{$(\Loc\circ  c^+)_*$} (2');
\draw[->] (1') to node[below,la]{$\cT^+_\cC$} (2');
\end{tz}
\end{prop}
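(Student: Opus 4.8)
The plan is to follow the template of \cref{prop:leftadjcommute1}. Since $\sSet^+$ is a presheaf category, so is the slice $\sSet^+_{/(N\cC)^\sharp}$, and all four functors in the square are cocontinuous: $\cF^+_{(-)}(\cC)$ and $\cT^+_\cC$ are left adjoints, $c^+$ induces a left adjoint on slices by \cref{markednerveadj}, and $\Loc\circ c^+$ is a left adjoint (a composite of $c^+\dashv N^+$ with $\Loc\dashv(-)^\natural$), so $(\Loc\circ c^+)_*$ preserves colimits. Hence it suffices to produce, naturally in the representable, an isomorphism between the two composites on each representable object of $\sSet^+_{/(N\cC)^\sharp}$. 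These representables come in two families: (a) the maps $\sigma\colon\Delta[n]^\flat\to(N\cC)^\sharp$, corresponding to functors $\sigma\colon[n]\to\cC$; and (b) the maps $f\colon\Delta[1]^\sharp\to(N\cC)^\sharp$, corresponding to morphisms $f\colon c_0\to c_1$ of $\cC$.

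For family (a) the argument is that of \cref{prop:leftadjcommute1} with the markings tracked. Going down then right, $c^+$ sends $\Delta[n]^\flat\xrightarrow{\sigma}(N\cC)^\sharp$ to $[n]^\flat\xrightarrow{\sigma}\cC^\sharp$, which $\cT^+_\cC$ sends to $c\mapsto(c\downarrow\sigma)[E_c^{-1}]$; as the marking on $[n]$ is minimal so is $E_c$, the localization is trivial, and this composite is $(-)\downarrow[n]$'s obvious analogue, namely $(-)\downarrow\sigma\colon\cC^\op\to\Cat$. Going right then down, one factors $\sigma$ as $\Delta[n]^\flat\to(N[n])^\sharp\to(N\cC)^\sharp$ and invokes the marked analogue of \cite[Remark 3.2.5.8]{HTT} to reduce to the value of $\cF^+_{(-)}([n])$ on $\Delta[n]^\flat$; by \cite[Example 3.2.5.6]{HTT} its underlying diagram is $N((-)\downarrow [n])$, and since $\Delta[n]^\flat$ is flat this diagram carries the level-wise minimal marking. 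Applying $\Loc\circ c^+$ level-wise then gives $(-)\downarrow [n]\colon[n]^\op\to\Cat$ (it sends a minimally marked nerve $(NK)^\flat$ to $K$); as $\Loc\circ c^+$ is a left adjoint it commutes with $\sigma_!$, and \cref{lem:LKE} gives $\sigma_!((-)\downarrow [n])\cong(-)\downarrow\sigma$. So the two composites agree on family (a).

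For family (b) --- the genuinely new case --- going down then right, $c^+$ sends $\Delta[1]^\sharp\xrightarrow{f}(N\cC)^\sharp$ to $[1]^\sharp\xrightarrow{f}\cC^\sharp$ (all morphisms of $[1]$ marked), which $\cT^+_\cC$ sends to $c\mapsto(c\downarrow f)[E_c^{-1}]$, where $c\downarrow f$ is the comma category of the functor $f\colon[1]\to\cC$ and $E_c$ is the set of its morphisms lying over $0\to 1$. Going right then down, the same reduction along $f$ (using $(N[1])^\sharp=\Delta[1]^\sharp$) yields $\cF^+_f(\cC)\cong f_!\bigl(\cF^+_{\id_{\Delta[1]^\sharp}}([1])\bigr)$, where $\cF^+_{\id_{\Delta[1]^\sharp}}([1])\colon[1]^\op\to\sSet^+$ has underlying diagram $N((-)\downarrow [1])$ with value $\Delta[1]^\sharp$ at $0$ (its non-degenerate edge is marked, being the image of the marked edge of the source) and $\Delta[0]^\flat$ at $1$. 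Applying $\Loc\circ c^+$ level-wise produces the diagram $[1]^\op\to\Cat$ with value $\bI$ at $0$ and $[0]$ at $1$; left Kan extending this along $f$ and computing the colimit pointwise, exactly as in the proof of \cref{lem:LKE}, returns $c\mapsto(c\downarrow f)[E_c^{-1}]$, the colimit assembling the copies of $\bI$ and $[0]$ into the free groupoid on the graph underlying $c\downarrow f$, i.e.\ the localization of $c\downarrow f$ at its morphisms over $0\to 1$. So the two composites agree on family (b) as well.

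Finally one checks that these isomorphisms are compatible with the maps between representables, so that they assemble into a natural isomorphism of the two cocontinuous functors; this is routine from the explicit descriptions above. The step requiring the most care is the bookkeeping of markings through $\cF^+_{(-)}(\cC)$: one must confirm that the marked analogues of \cite[Remark 3.2.5.8]{HTT} and \cite[Example 3.2.5.6]{HTT} describe the markings of $\cF^+_{(-)}([n])$ on $\Delta[n]^\flat$ and of $\cF^+_{\id_{\Delta[1]^\sharp}}([1])$ as claimed --- level-wise minimal in the flat case, and with the non-degenerate edge marked at vertex $0$ in the sharp case --- after which everything reduces to the unmarked computations already carried out in \cref{prop:leftadjcommute1,lem:LKE}.
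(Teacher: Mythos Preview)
Your approach is essentially the same as the paper's: reduce to representables and handle the two families $\Delta[n]^\flat\to(N\cC)^\sharp$ and $\Delta[1]^\sharp\to(N\cC)^\sharp$ separately, using \cite[Remark 3.2.5.14, Example 3.2.5.6]{HTT} and \cref{lem:LKE}. Two minor points are worth noting. First, $\sSet^+$ is not literally a presheaf category (the marking must be a \emph{subset} of the $1$-simplices); the paper phrases this more carefully by viewing $\sSet^+_{/(N\cC)^\sharp}$ as a full subcategory of a presheaf category, but your argument goes through since every object is a colimit of the $\Delta[n]^\flat$'s and $\Delta[1]^\sharp$'s.

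Second, for family (b) your treatment is correct but more laborious than necessary. Since \emph{every} morphism of $[1]^\sharp$ is marked, the set $E_c$ is all of $\mathrm{Mor}(c\downarrow f)$ (not just the morphisms over $0\to 1$, though this is harmless since the remaining ones are identities); hence $\cT^+_\cC([1]^\sharp)$ is the groupoidification $((-)\downarrow\sigma)^\simeq$. Likewise, on the other side, $(\Loc\circ c^+)_*\cF^+_{\id_{\Delta[1]^\sharp}}([1])\cong ((-)\downarrow[1])^\simeq$. The paper then observes that, since $(-)^\simeq$ is a left adjoint, it commutes with $\sigma_!$, so
\[
\sigma_!\bigl(((-)\downarrow[1])^\simeq\bigr)\cong\bigl(\sigma_!((-)\downarrow[1])\bigr)^\simeq\cong((-)\downarrow\sigma)^\simeq
\]
directly by \cref{lem:LKE}, avoiding the need to recompute the colimit by hand. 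Your direct computation gives the same answer, but this shortcut is cleaner and makes the naturality check you defer at the end essentially automatic.
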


\begin{proof}
    Let us denote by $\int_{\Delta^+} (N\cC)^\sharp$ the category of elements of the marked simplicial set $(N\cC)^\sharp$. Then the slice category $\sSet^+_{/(N\cC)^\sharp}$ can be seen as a full subcategory of the presheaf category $\Set^{(\int_{\Delta^+} (N\cC)^\sharp)^{\op}}$. Hence, as every functor in the above square is a left adjoint, it is enough to show that their values agree on all representables $\Delta[n]^\flat\to (N\cC)^\sharp$ and $\Delta[1]^\sharp\to (N\cC)^\sharp$ of $\Set^{(\int_{\Delta^+} (N\cC)^\sharp)^{\op}}$. 

    First, a representable $\sigma\colon \Delta[n]^\flat\to (N\cC)^\sharp$ is sent by $c^+$ to the corresponding functor $\sigma\colon [n]^\flat\to \cC^\sharp$, which is in turn sent by $\cT^+_\cC$ to the functor $(-)\downarrow \sigma\colon \cC^{\op}\to \Cat$. On the other hand, we can express the action of $(\Loc\circ  c^+)_*\cF^+_{(-)}(\cC)$ on $\sigma$ as the composite of the left vertical map and the two bottom horizontal maps in the diagram below.

    \begin{tz}
\node[](1) {$\sSet^+_{/\Delta[n]^\sharp}$}; 
\node[right of=1,xshift=2.2cm](2) {$[[n]^{\op},\sSet^+]$}; 
\node[right of=2,xshift=2.2cm](3) {$[[n]^{\op},\Cat]$}; 
\node[below of=1](1') {$\sSet^+_{/(N\cC)^\sharp}$};
\node[below of=2](2') {$[\cC^{\op},\sSet^+]$}; 
\node[below of=3](3') {$[\cC^{\op},\Cat]$};

\draw[->] (1) to node[above,la]{$\cF_{(-)}^+([n])$} (2); 
\draw[->] (2) to node[above,la]{$(\Loc\circ c^+)_*$} (3); 
\draw[->] (1) to node[left,la]{$\sigma_!$} (1'); 
\draw[->] (2) to node[right,la]{$\sigma_!$} (2');
\draw[->] (3) to node[right,la]{$\sigma_!$} (3');
\draw[->] (1') to node[below,la]{$\cF_{(-)}^+(\cC)$} (2');
\draw[->] (2') to node[below,la]{$(\Loc\circ c^+)_*$} (3');
\end{tz}
Now, the left-hand square commutes up to isomorphism by \cite[Remark 3.2.5.14]{HTT}, and the right-hand square commutes up to isomorphism as it is straightforward to check that the corresponding square of right adjoints commutes. Hence, this allows for the following rewriting
 \[ (\Loc\circ c^+)_*\cF_{\sigma}^+(\cC)\cong \sigma_!(\Loc\circ c^+)_*\cF_{\id_{\Delta[n]}}^+([n]), \]
where $\id_{\Delta[n]}\colon \Delta[n]^\flat\to \Delta[n]^\sharp$. By \cite[Example 3.2.5.6]{HTT}, there is an isomorphism $\cF_{\id_{\Delta[n]}}^+([n])\cong N((-)\downarrow [n])^\flat$ and so we have 
\[ (\Loc\circ c^+)_*\cF_{\id_{\Delta[n]}}^+([n])\cong c N((-)\downarrow [n])\cong (-)\downarrow [n]. \] Finally, by \cref{lem:LKE}, the left Kan extension along $\sigma\colon [n]\to \cC$ is given by \[ \sigma_! ((-)\downarrow [n])\cong (-)\downarrow \sigma \] and so we get the desired result. 

Next, we turn to the representable $\sigma\colon \Delta[1]^\sharp\to (N\cC)^\sharp$. This is sent by $c^+$ to the corresponding functor $\sigma\colon [1]^\sharp\to \cC^\sharp$, which is in turn sent by $\cT^+_\cC$ to the functor $((-)\downarrow \sigma)^{\simeq}$, where $(-)^{\simeq}\colon \Cat\to \Gpd$ is the groupoidification functor (the left adjoint of the full inclusion of groupoids into categories). As before, we have an isomorphism
\[(\Loc\circ c^+)_*\cF_{\sigma}^+(\cC)\cong \sigma_!(\Loc\circ c^+)_*\cF_{\id_{\Delta[1]^\sharp}}^+([1]). \]
Using \cite[Example 3.2.5.6]{HTT} and keeping track of the markings, there is an isomorphism $\cF_{\id_{\Delta[1]^\sharp}}^+([1])\cong (N((-)\downarrow [1]),E_{(-)})$, where $E_i$ contains all $1$-simplices of $N(i\downarrow [1])$ for $i=0,1$, and so we have
\[ (\Loc\circ c^+)_*\cF_{\id_{\Delta[1]^\sharp}}^+([1])\cong (c N((-)\downarrow [1]))[E_{(-)}^{-1}] \cong ((-)\downarrow [1])^\simeq. \] 
Since post-composing with the groupoidification functor commutes with left Kan extensions and using \cref{lem:LKE}, we get that 
\[ \sigma_!(((-)\downarrow [1])^\simeq)\cong (\sigma_!((-)\downarrow [1]))^\simeq\cong ((-)\downarrow\sigma)^\simeq, \]
which gives the desired result. 
\end{proof}

We can now use \cref{prop:leftadjcommute} to obtain the commutativity of the square of left adjoints of interest.

\begin{prop} \label{prop:leftQuillencommute}
The following square of left Quillen functors commutes up to a level-wise weak equivalence.
    \begin{tz}
\node[](1) {$(\sSet^+_{/(N\cC)^\sharp})_\mathrm{Cart}$}; 
\node[right of=1,xshift=2.8cm](2) {$[\cC^{\op},\sSet^+]_\mathrm{proj}$}; 
\node[below of=1](1') {$(\Cat^+_{/\cC^\sharp})_\mathrm{GrFib}$};
\node[below of=2](2') {$[\cC^{\op},\Cat]_\mathrm{proj}$}; 

\draw[->] (1) to node[above,la]{$\mathrm{St}^+_\cC$} node[below,la]{$\simeq_{QE}$} (2); 
\draw[->] (1) to node[left,la]{$c^+$} (1'); 
\draw[->] (2) to node[right,la]{$(\Loc\circ  c^+)_*$} (2');
\draw[->] (1') to node[below,la]{$\cT^+_\cC$} node[above,la]{$\simeq_{QE}$} (2');
\end{tz}
\end{prop}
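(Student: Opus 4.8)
The plan is to follow the proof of \cref{prop:leftQuillencommute1} essentially verbatim, with the functor $\pi_0\colon \sSet\to\Set$ appearing there replaced by the composite $\Loc\circ c^+\colon \sSet^+\to\Cat$. By \cite[Lemma 3.2.5.17(2)]{HTT} there is a natural transformation $\mathrm{St}^+_\cC\Rightarrow\cF^+_{(-)}(\cC)$ whose components are weak equivalences in $[\cC^\op,\sSet^+]_\mathrm{proj}$. It therefore suffices to show that whiskering this transformation with $(\Loc\circ c^+)_*$ still produces a level-wise weak equivalence, and then to combine the result with the natural isomorphism $(\Loc\circ c^+)_*\circ\cF^+_{(-)}(\cC)\cong\cT^+_\cC\circ c^+$ furnished by \cref{prop:leftadjcommute}.

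For the whiskering step, I would first record that $\Loc\circ c^+$ is a left Quillen functor from $(\sSet^+)_\mathrm{Cart}=(\sSet^+_{/\Delta[0]^\sharp})_\mathrm{Cart}$ to the canonical model structure on $\Cat$, being the composite of the left Quillen functor $c^+$ of \cref{cor:cNmarkedQP} with the left Quillen functor $\Loc$ of \cref{cor:locisoQP}. Since the cofibrations of $(\sSet^+)_\mathrm{Cart}$ are the monomorphisms, every object is cofibrant, so Ken Brown's lemma \cite[Lemma 1.1.12]{hovey} shows that $\Loc\circ c^+$ preserves all weak equivalences. As weak equivalences in the projective model structures are detected level-wise, the induced functor $(\Loc\circ c^+)_*\colon [\cC^\op,\sSet^+]_\mathrm{proj}\to[\cC^\op,\Cat]_\mathrm{proj}$ also preserves all weak equivalences, which gives the desired level-wise weak equivalence $(\Loc\circ c^+)_*\mathrm{St}^+_\cC\Rightarrow(\Loc\circ c^+)_*\cF^+_{(-)}(\cC)$ in $[\cC^\op,\Cat]_\mathrm{proj}$.

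I do not expect any genuine obstacle here, since all the substantive content has already been isolated into \cref{prop:leftadjcommute} (and, through it, \cref{lem:LKE}). The only point warranting a sentence of care is confirming that $\Loc\circ c^+$ is left Quillen for the correct pair of model structures — which is why I would make the appeal to \cref{cor:cNmarkedQP,cor:locisoQP} explicit rather than merely asserting that $\Loc\circ c^+$ preserves weak equivalences.
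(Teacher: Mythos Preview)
Your proposal is correct and matches the paper's proof essentially step for step: invoke \cite[Lemma 3.2.5.17(2)]{HTT}, observe that $\Loc\circ c^+$ preserves all weak equivalences because it is left Quillen and every object of $\sSet^+$ is cofibrant, deduce the same for $(\Loc\circ c^+)_*$, and finish with \cref{prop:leftadjcommute}. Your explicit citation of \cref{cor:cNmarkedQP,cor:locisoQP} for the left Quillen claim is slightly more careful than the paper, which simply asserts it.
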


\begin{proof}
    By \cite[Lemma 3.2.5.17(2)]{HTT}, there is a natural transformation 
    \[ \mathrm{St}^+_\cC \Rightarrow \cF^+_{(-)}(\cC) \]
    whose components are weak equivalences in $[\cC^{\op},\sSet^+]_\mathrm{proj}$. Now, note that the composite of left Quillen functors 
    \[ \sSet^+\xrightarrow{c^+} \Cat^+ \xrightarrow{\Loc} \Cat\]
    preserves all weak equivalences since all objects are cofibrant in $\sSet^+$. Hence the functor 
    \[ (\Loc\circ  c^+)_*\colon [\cC^{\op},\sSet^+]_\mathrm{proj}\to [\cC^{\op},\Cat]_\mathrm{proj}\]
    also preserves all weak equivalences as they are defined level-wise. This implies that the natural transformation
    \[ (\Loc\circ  c^+)_*\mathrm{St}^+_\cC \Rightarrow (\Loc\circ  c^+)_*\cF^+_{(-)}(\cC) \]
     is level-wise a weak equivalence in $[\cC^{\op},\Cat]_\mathrm{proj}$. Combining with the isomorphism from \cref{prop:leftadjcommute}, we get the desired result. 
\end{proof}

\begin{theorem}\label{prop:rightQuillencommute}
    The following square of homotopically fully faithful right Quillen functors commutes up to a level-wise weak equivalence. 
    \begin{tz}
\node[](1) {$[\cC^{\op},\Cat]_{\mathrm{proj}}$}; 
\node[right of=1,xshift=2.8cm](2) {$(\Cat^+_{/\cC^\sharp})_{\mathrm{GrFib}}$}; 
\node[below of=1](1') {$[\cC^\op,\sSet^+]_\mathrm{proj}$};
\node[below of=2](2') {$(\sSet^+_{/(N\cC)^\sharp})_{\mathrm{Cart}}$}; 

\draw[->] (1) to node[above,la]{$\int^+_\cC$} node[below,la]{$\simeq_{QE}$} (2); 
\draw[right hook->] (1) to node[left,la]{$(N^+(-)^\natural)_*$} (1'); 
\draw[right hook->] (2) to node[right,la]{$N^+$} (2');
\draw[->] (1') to node[below,la]{$\mathrm{Un}_\cC^+$} node[above,la]{$\simeq_{QE}$} (2');
\end{tz}
\end{theorem}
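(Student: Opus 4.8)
The plan is to obtain this as a formal consequence of the commutativity of the corresponding square of \emph{left} Quillen functors, exactly as \cref{prop:rightQuillencommute1} was deduced from \cref{prop:leftQuillencommute1} in the discrete case. The key observation is that the square in the statement is the ``mate'' of the square in \cref{prop:leftQuillencommute}, obtained by reversing every arrow: indeed $\cT^+_\cC\dashv\int^+_\cC$ by \cref{adjunctionmarkedgroth}, $c^+\dashv N^+$ by \cref{markednerveadj}, $\mathrm{St}^+_\cC\dashv\mathrm{Un}^+_\cC$ is straightening--unstraightening, and $(\Loc\circ c^+)_*\dashv(N^+(-)^\natural)_*$ as recorded in \eqref{QP2}. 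Hence the natural level-wise weak equivalence supplied by \cref{prop:leftQuillencommute} has a mate, which is a natural transformation between the two composite right Quillen functors $N^+\circ\int^+_\cC$ and $\mathrm{Un}^+_\cC\circ(N^+(-)^\natural)_*$. (That the four functors in the square are homotopically fully faithful right Quillen functors is immediate from \cref{thm:markedGCQE}, \cref{markednerveQrefl}, and the straightening--unstraightening theorem \cite[Theorem 3.2.0.1]{HTT}.)

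The second step is to apply \cite[Lemma E.2.12]{JoyalVolumeII}: the mate of a natural weak equivalence between left Quillen functors is again a natural weak equivalence between the associated right Quillen functors, provided one evaluates at fibrant objects. This gives that $N^+\circ\int^+_\cC$ and $\mathrm{Un}^+_\cC\circ(N^+(-)^\natural)_*$ agree up to a level-wise weak equivalence on every fibrant object of $[\cC^{\op},\Cat]_\mathrm{proj}$. Finally, I would observe that \emph{every} object of $[\cC^{\op},\Cat]_\mathrm{proj}$ is fibrant, since fibrant objects in the projective model structure are detected level-wise and every category is fibrant in the canonical model structure on $\Cat$; hence the level-wise weak equivalence holds on all of $[\cC^{\op},\Cat]_\mathrm{proj}$, which is exactly the claim.

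Since \cref{prop:leftQuillencommute}, and with it \cref{prop:leftadjcommute} and the technical \cref{lem:LKE}, already contain all of the real content, I do not expect a genuine obstacle here; the remaining work is bookkeeping. One should check that the four adjoint identifications above are correct -- in particular that the left-hand vertical adjunction, being a two-step composite $\Loc_*\circ c^+_*\dashv N^+_*\circ(-)^\natural_*$, still fits the mate formalism, which it does because mates compose -- and one should confirm the fibrancy hypothesis of \cite[Lemma E.2.12]{JoyalVolumeII}. The latter is even more immediate than in the discrete case, requiring nothing about $\Cat$ beyond the fact that all of its objects are fibrant.
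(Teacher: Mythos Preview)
Your proposal is correct and follows essentially the same approach as the paper: deduce the square of right adjoints from \cref{prop:leftQuillencommute} via \cite[Lemma E.2.12]{JoyalVolumeII}, using that every object of $[\cC^{\op},\Cat]_\mathrm{proj}$ is fibrant. The paper's proof is a single sentence stating exactly this; your write-up simply unpacks the mate formalism and the fibrancy check in more detail.
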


\begin{proof}
    This follows from \cref{prop:leftQuillencommute} and \cite[Lemma E.2.12]{JoyalVolumeII}, since all objects in $[\cC^{\op},\Cat]_\mathrm{proj}$ are fibrant. 
\end{proof}

\bibliographystyle{alpha}
\bibliography{references}

\end{document}